\documentclass[12pt]{amsart}
\usepackage{amsmath, amssymb, comment, graphicx, url, amsthm, multicol, latexsym, enumerate, bbm, mathtools, physics, microtype, cite, xcolor, float}
\usepackage{units}
\usepackage[all,cmtip]{xy}
\usepackage{wrapfig}
\usepackage[hmargin=1in]{geometry} 
\usepackage{tikz}
\usepackage{tikzsymbols}
\usetikzlibrary{shapes.geometric, arrows}
\usepackage{enumitem}
\usepackage[cm]{sfmath}
\usepackage{hyperref}
\hypersetup{
    colorlinks=true,
    linkcolor=blue,
    citecolor=magenta,
    filecolor=magenta,      
    urlcolor=magenta,
}
\usepackage{comment}

\definecolor{andresblue}{rgb}{0,0.72,0.92}
\definecolor{andrespink}{rgb}{1,0,1}

\newtheorem{theorem}{Theorem}[section]

\newtheorem{conjecture}[theorem]{Conjecture}
\newtheorem{construction}[theorem]{Construction}
\newtheorem{lemma}[theorem]{Lemma}
\newtheorem{corollary}[theorem]{Corollary}

\theoremstyle{definition}
\newtheorem{remark}[theorem]{Remark}
\newtheorem{definition}[theorem]{Definition}

\newtheorem{example}[theorem]{Example}

\theoremstyle{remark}


\newcommand{\R}{\mathbb{R}}

\newcommand{\Z}{\mathbb{Z}}
\newcommand{\N}{\mathbb{N}}

\newcommand{\bv}{\mathbf{v}}

\newcommand{\bw}{\mathbf{w}}

\newcommand{\ba}{\mathbf{a}}

\newcommand{\bx}{\mathbf{x}}

\newcommand{\be}{\mathbf{e}}
\newcommand{\bq}{\mathbf{q}}

\newcommand{\bl}{\mathbf{l}}
\newcommand{\bs}{\mathbf{s}}

\newcommand{\vol}{\mathrm{vol}}

\newcommand{\calP}{\mathcal{P}}
\newcommand{\calO}{\mathcal{O}}
\newcommand{\calM}{\mathcal{M}}
\newcommand{\calD}{\mathcal{D}}
\newcommand{\calC}{\mathcal{C}}

\newcommand{\defterm}[1]{\emph{#1}}

\newcommand{\precdot}{\prec\mathrel{\mkern-5mu}\mathrel{\cdot}}

\makeatletter
\newcommand{\preceqdot}{\mathrel{\mathpalette\pr@ceqd@t\relax}}
\newcommand{\pr@ceqd@t}[2]{%
  \begingroup
  \sbox\z@{$#1\prec$}\sbox\tw@{$#1\preceq$}%
  \dimen@=\dimexpr\ht\tw@-\ht\z@\relax
  {\preceq}%
  \mkern-5mu
  \raisebox{\dimen@}{$\m@th#1\cdot$}%
  \endgroup
}
\makeatother

\makeatletter 
\newtheorem*{rep@theorem}{\rep@title}\newcommand{\newreptheorem}[2]{%
\newenvironment{rep#1}[1]{%
\def\rep@title{\bf #2 \ref{##1}}%
\begin{rep@theorem}}%
{\end{rep@theorem}}}
\makeatother
\newreptheorem{theorem}{Theorem}

\newtheorem*{rep@proposition}{\rep@title}\newcommand{\newrepproposition}[2]{%
\newenvironment{rep#1}[1]{%
\def\rep@title{\bf #2 \ref{##1}}%
\begin{rep@proposition}}%
{\end{rep@proposition}}}
\makeatother
\newreptheorem{proposition}{Proposition}


\usepackage[colorinlistoftodos]{todonotes}

\definecolor{munsell}{rgb}{0.0, 0.5, 0.69}


\begin{document}


\title{Generalized snake posets, order polytopes, and  lattice-point enumeration}

\author{Eon Lee}
\address{\scriptsize{School of Electrical Engineering and Computer Science, Gwangju Institute of Science and Technology}}
\email{\scriptsize{eonlee1125@gmail.com}}

\author{Andr\'es R. Vindas-Mel\'endez}
\address{\scriptsize{Department of Mathematics, Harvey Mudd College}, \url{https://math.hmc.edu/arvm}}
\email{\scriptsize{avindasmelendez@g.hmc.edu}}

\author{Zhi Wang}
\address{\scriptsize{Hal\i c\i o\u{g}lu Data Science Institute, University of California, San Diego}}
\email{\scriptsize{wangzhi0467@outlook.com}}


\begin{abstract}
   Building from the work of von Bell et al.~(2022), we study the Ehrhart theory of order polytopes arising from a special class of distributive lattices, known as generalized snake posets. 
   We present arithmetic properties satisfied by the Ehrhart polynomials of order polytopes of generalized snake posets along with a computation of their Gorenstein index. 
   Then we give a combinatorial description of the chain polynomial of generalized snake posets as a direction to obtain the $h^*$-polynomial of their associated order polytopes.
   Additionally, we present explicit formulae for the $h^*$-polynomial of the order polytopes of the two extremal examples of generalized snake posets, namely the ladder and regular snake poset. 
   We then provide a recursive formula for the $h^*$-polynomial of any generalized snake posets and show that the $h^*$-vectors are entry-wise bounded by the $h^*$-vectors of the two extremal cases. 
\end{abstract}


\maketitle

\section{Introduction}
This paper investigates the lattice-point enumeration and related properties of the order polytopes of a class of distributive lattices, known as generalized snake posets. 
Order polytopes were popularized by Richard Stanley through his seminal paper \cite{StanleyTwoPosetPolytopes}, but have ``been the subject of considerable scrutiny" even before that, though most results were ``scattered throughout the literature." 
For example, some of the first works on order polytopes include \cite{Dobbertin} and \cite{Geissinger}.
Ever since Stanley's paper, order polytopes have actively drawn mathematician's attention, where some aspects studied include: geometric and algebraic properties~\cite{DoignonRexhep, HaaseKohlTsuchiya, HibiLiLiMuTsuchiya, HibiMatsuda, HibiMatsudaKazunoriOhsugiHidefumiShibata}, connections between flow polytopes and order polytopes~\cite{LiuMeszarosStDizier, MeszarosMoralesStriker}, and lattice-point enumeration~\cite{ChappellFriedlSanyal, LiuTsuchiya}.
One of Stanley's fundamental observations is that the arrangement given by all hyperplanes of the form $x_i=x_j$ for $i\neq j$ induces a regular unimodular triangulation of the order polytope for any poset, known as the \emph{canonical triangulation} of an order polytope.

Generalized snake posets are posets that are constructed recursively by adding a square face at the bottom and gluing it to an edge of the previous square.
These special posets make an appearance in \cite[Section 3]{StanleyFlag}, where enumerative results regarding their flag $h$-vectors being  multiplicity-free (i.e., assumes only the values $0$ and $\pm1$) and bounds on the number of their linear extensions arise as consequences of the results presented. 
A geometric approach to studying these special posets can be considered through their order polytopes, which was the focus of \cite{vonBell+}. 
There the authors studied the circuits, flips, regular triangulations, and volumes of the order polytopes of generalized snake posets. 

Our contributions are on order polytopes of generalized snake posets, where we take particular focus on their Ehrhart theory.
This work can be considered as initiating the study of order polytopes of distributive lattices in the context of Ehrhart theory. 
The main object of interest is a distributive lattice with an upper bound $r$ on the number of elements in a single rank. 
In the current work, we completely formulate the case for $r = 2$, as knowing the $h^*$-polynomial for every generalized snake posets is enough to compute the Ehrhart polynomial for any distributive lattices with at most $2$ elements in each rank. 
This follows from the fact that generalized snake posets are the unique distributive lattice with exactly $2$ elements in each rank other than rank $1$ and the highest rank \cite{StanleyFlag}.
This, together with the fact that the $h^*$-polynomial of the order polytope of a chain is known, and the fact that the $h^*$-polynomial of the order polytope of $P \oplus Q$ can be obtained by knowing the $h^*$-polynomial of the order polytope of $P$ and $Q$, tells us that the knowledge we present is sufficient.

The article is organized as follows. 
\begin{itemize}
    \item In Section \ref{sec:prelims} we present background and preliminaries on triangulations and order polytopes.
    Moreover, Theorem \ref{thm:roots} presents arithmetic properties that the Ehrhart polynomials of order polytopes of generalized snake posets must satisfy, including a computation of their Gorenstein index. 
    
    \item Section \ref{sec:chain_polynomial} focuses on a combinatorial description of the chain polynomial of generalized snake posets as a means to obtaining the $h^*$-polynomial of an order polytope of a generalized snake poset.
    In particular, Theorem \ref{thm:chain_coefficients} presents the combinatorial formula of the chain polynomial of any generalized snake poset. 
    
    \item Then in Section \ref{sec:lattice-path} we explore the $h^*$-polynomials of generalized snake posets via lattice-path enumeration. 
    Through this lattice-path perspective, we obtain closed formulas $h^*$-polynomials for the regular snake (Theorem \ref{thm: h*_snake}) and the ladder (Theorem \ref{thm:h*_ladder}). 
    Additionally, we obtain a recurrence formula for the $h^*$-polynomial of any generalized snake (Theorem \ref{thm: recurrence for h*}), where the base cases are in fact the $h^*$-polynomials of the ladder.
    Furthermore, a coefficient-wise monotonicity result for the $h^*$-polynomial of any generalized snake is given (Theorem \ref{thm: coef mono}), bounding the $h^*$-vectors of any generalized snake posets with the $h^*$-vectors of the ladder and the regular snake (Corollary \ref{cor 4.20}). 
    
    \item We conclude with conjectures for further research in Section \ref{sec:conclusion} regarding the real rooted-ness of $h^*$-polynomials and Ehrhart positivity.
\end{itemize}


\section{Background \& Preliminaries}\label{sec:prelims}
Our main focus of study is the lattice-point enumeration of  polytopes that arise from a special partially ordered set (poset), namely a generalized snake poset, which we now present.

\subsection{Generalized snake posets}
The family of generalized snake posets $P(\bw)$ are distributive lattices with width two and were studied geometrically in \cite{vonBell+}.

\begin{definition}[Definition 3.1, \cite{vonBell+}]
For $n\in \mathbb{Z}_{\geq0}$, a \emph{generalized snake word} is a word of the form $\bw=w_0 w_1 \cdots w_n$ where $w_0 =\varepsilon$ is the empty letter and $w_i$ is in the alphabet $\{L,R\}$ for $i=1,\ldots, n$.
The \emph{length} of the word is $n$, which is the number of letters in $\{L,R\}$.
\end{definition}

\begin{definition}[Definition 3.2, \cite{vonBell+}]
Given a generalized snake word $\bw=w_0w_1\cdots w_n$, the \emph{generalized snake poset} $P(\bw)$ is recursively defined as follows:
\begin{itemize}
    \item $P(w_0) = P(\varepsilon)$ is the poset on elements $\{0,1,2,3\}$ with cover relations $1\prec 0$, $2\prec 0$, $3\prec 1$ and $3\prec 2$. 

    \item $P(w_0w_1\cdots w_n)$ is the poset $P(w_0w_1\cdots w_{n-1}) \cup \{2n+2,2n+3\}$ with the added cover relations $2n+3 \prec 2n+1$, $2n+3 \prec 2n+2$, and 
    $$\begin{cases}
    2n+2 \prec 2n-1, 
        & \text{ if } n=1 \text{ and } w_n = L, \text{ or } n \geq 2 \text{ and }  w_{n-1}w_n \in \{RL,LR\},\\
    2n+2 \prec 2n, 
        & \text{ if } n=1 \text{ and } w_n = R, \text{ or } n \geq 2 \text{ and }  w_{n-1}w_n \in \{LL,RR\}.
    \end{cases}$$

\end{itemize}
In this definition, the minimal element of the poset $P(\bw)$ is $\widehat0=2n+3$, and the maximal element of the poset is $\widehat1 = 0$.
\end{definition} 

If $\bw=w_0w_1\cdots w_n$ is a generalized snake word of length $n$, then $P(\bw)$ is a distributive lattice of width two and rank $n+2$.
Two special cases of generalized snake posets are the \emph{regular snake poset} and the \emph{ladder poset}.
For the length $n$ word $\varepsilon LRLR\cdots$, $S_n:=P(\varepsilon LRLR\cdots)$ is the \emph{regular snake poset}, and for the length $n$ word $\varepsilon LLLL\cdots$, $\mathcal{L}_n:=P(\varepsilon LLLL\cdots)$ is the \emph{ladder poset}.

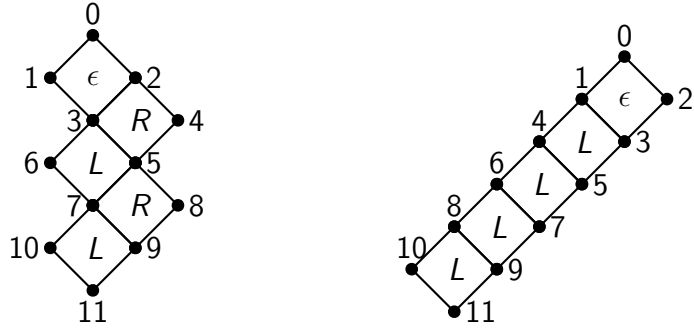
\begin{figure}[htbp]
    \centering
    \begin{tikzpicture}[scale= 0.8, rotate = -45]
    \def\xmin{-5}
    \def\xmax{5}
    \def\ymin{0}
    \def\ymax{10}

    \draw[black,thick] (0,0) rectangle (1,1);
    \foreach \x/\y in {0/0, 1/0, 0/1, 1/1}
        \fill (\x,\y) circle (3pt);
    \draw[black,thick] (0,1) rectangle (1,2);
    \foreach \x/\y in {0/1, 1/1, 0/2, 1/2}
        \fill (\x,\y) circle (3pt);
    \draw[black,thick] (0,1) rectangle (-1,2);
    \foreach \x/\y in {0/1, 0/2, -1/1, -1/2}
        \fill (\x,\y) circle (3pt);
    \draw[black,thick] (-1,2) rectangle (0,3);
    \foreach \x/\y in {-1/2, 0/2, -1/3, 0/3}
        \fill (\x,\y) circle (3pt);
    \draw[black,thick] (-1,2) rectangle (-2,3);
    \foreach \x/\y in {-1/2, -1/3, -2/2, -2/3}
        \fill (\x,\y) circle (3pt);

    \draw (-1.7, 2.3) node[right] {$\epsilon$};
    \draw (-0.7, 2.3) node[right] {$R$};
    \draw (-0.7, 1.3) node[right] {$L$};
    \draw (0.3, 1.3) node[right] {$R$};
    \draw (0.3, 0.3) node[right] {$L$};

    \draw (-2, 3) node[above] {$0$};
    \draw (-2, 2) node[left] {$1$};
    \draw (-1, 3) node[right] {$2$};
    \draw (-1, 2) node[left] {$3$};
    \draw (0, 3) node[right] {$4$};
    \draw (0, 2) node[right] {$5$};
    \draw (-1, 1) node[left] {$6$};
    \draw (0, 1) node[left] {$7$};
    \draw (1, 2) node[right] {$8$};
    \draw (1, 1) node[right] {$9$};
    \draw (0, 0) node[left] {$10$};
    \draw (1, 0) node[below] {$11$};
\end{tikzpicture}
\hspace{2 cm}
    \begin{tikzpicture}[scale= 0.8, rotate = -45]
    \def\xmin{-5}
    \def\xmax{5}
    \def\ymin{0}
    \def\ymax{10}

    \draw[black,thick] (0,0) rectangle (1,1);
    \foreach \x/\y in {0/0, 1/0, 0/1, 1/1}
        \fill (\x,\y) circle (3pt);
    \draw[black,thick] (0,1) rectangle (1,2);
    \foreach \x/\y in {0/1, 1/1, 0/2, 1/2}
        \fill (\x,\y) circle (3pt);
    \draw[black,thick] (0,2) rectangle (1,3);
    \foreach \x/\y in {0/2, 1/2, 0/3, 1/3}
        \fill (\x,\y) circle (3pt);
    \draw[black,thick] (0,3) rectangle (1,4);
    \foreach \x/\y in {0/3, 1/3, 0/4, 1/4}
        \fill (\x,\y) circle (3pt);
    \draw[black,thick] (0,4) rectangle (1,5);
    \foreach \x/\y in {0/4, 1/4, 0/5, 1/5}
        \fill (\x,\y) circle (3pt);

    \draw (0.3, 4.3) node[right] {$\epsilon$};
    \draw (0.3, 3.3) node[right] {$L$};
    \draw (0.3, 2.3) node[right] {$L$};
    \draw (0.3, 1.3) node[right] {$L$};
    \draw (0.3, 0.3) node[right] {$L$};

    \draw (0, 5) node[above] {$0$};
    \draw (0, 4) node[above] {$1$};
    \draw (0, 3) node[above] {$4$};
    \draw (0, 2) node[above] {$6$};
    \draw (0, 1) node[above] {$8$};
    \draw (0, 0) node[above] {$10$};
    
    \draw (1, 5) node[right] {$2$};
    \draw (1, 4) node[right] {$3$};
    \draw (1, 3) node[right] {$5$};
    \draw (1, 2) node[right] {$7$};
    \draw (1, 1) node[right] {$9$};
    \draw (1, 0) node[right] {$11$};
    
\end{tikzpicture}
    \caption{The regular snake poset $P(\epsilon RLRL)$ and the ladder poset $P(\epsilon LLLL)$}
    \label{fig: what's GSP}
\end{figure}
The labeling of $J(P(\bw))$ is given by the following lemma.
\begin{lemma} [Lemma 3.3, \cite{vonBell+}] \label{lemma: the labeling of J(P)}
    Let $w = w_0 w_1 \cdots w_n$ be a generalized snake word. If $k \geq 0$ is the largest index such that $w_k \neq w_n$, then $J(P(w)) =$
\begin{align*}
J(P(w_0 w_1 \cdots w_{n-1})) \cup \{\langle 2n + 3 \rangle, \langle 2n + 2 \rangle, \langle 2n + 2, 2k + 2 \rangle\} \cup \{\langle 2n + 2, 2k + 2i + 1 \rangle\}_{i=1}^{n-k}.
\end{align*}
\end{lemma}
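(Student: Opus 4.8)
The plan is to analyze $J(P(w))$ one step of the recursive construction at a time. Write $P := P(w_0\cdots w_{n-1})$ and $P' := P(w_0\cdots w_n)$, so that $P'$ is obtained from $P$ by adjoining $2n+3$ as a new minimum (with $2n+3 \prec 2n+1$ and $2n+3 \prec 2n+2$) and adjoining $2n+2$, which is covered in $P'$ by the single element $m \in \{2n-1, 2n\}$ prescribed in Definition 3.2 and by nothing else. First I would embed $J(P)$ into $J(P')$ by sending an order ideal $I$ of $P$ to the order ideal of $P'$ with the same maximal elements, i.e.\ to the downward closure of $I$ taken in $P'$. This map is injective, and its image is exactly the set of order ideals of $P'$ all of whose maximal elements lie in $P$. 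Consequently the order ideals of $P'$ that are genuinely new are precisely those having a maximal element in $\{2n+2, 2n+3\}$, and the entire content of the lemma is to list these.

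An order ideal of $P'$ with $2n+3$ maximal must equal $\{2n+3\} = \langle 2n+3\rangle$. An order ideal with $2n+2$ maximal is one containing $2n+2$ but not $m$, hence of the form $\{2n+2, 2n+3\}\cup I$ where $I$ is an order ideal of $P$ with $m \notin I$; this gives $\langle 2n+2\rangle$ when $I = \emptyset$. So the problem reduces to describing the nonempty order ideals of $P$ that avoid $m$. For this I would pass to the down-set $D := \{x \in P : x \not\geq m\}$: an order ideal of $P$ omits $m$ if and only if it is contained in $D$, so the nonempty order ideals of $P$ avoiding $m$ are exactly the nonempty down-sets of the induced subposet $D$, and each of them, once I know the shape of $D$, reattaches to a new ideal $\langle 2n+2, x\rangle$.

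The key step is to identify $D$, and this is where the index $k$ enters. Reading the cover relations of Definition 3.2 along the final straight run $w_{k+1} = \cdots = w_n$, the principal filter $\{x : x \geq m\}$ climbs the outer chain $m \prec m-2 \prec \cdots \prec 2k+4$ of that run and, at the turn occurring at step $k+1$, attaches to the previous minimum $2k+1$, whose filter is all of $P(w_0\cdots w_{k-1}) = \{0, 1, \ldots, 2k+1\}$. Subtracting this filter from $P$ leaves the successive minima $2n+1 \prec 2n-1 \prec \cdots \prec 2k+3$ together with the one middle element of the $k$-th square that does not lie above $m$, and the rung $2k+3 \prec 2k+2$ places this element at the top; thus $D$ is a single chain with $n-k+1$ elements. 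Its nonempty down-sets are its $n-k+1$ principal ideals, and reattaching $2n+2$ produces exactly $\langle 2n+2, 2k+2\rangle$ together with $\{\langle 2n+2, 2k+2i+1\rangle\}_{i=1}^{n-k}$. I expect the main obstacle to be precisely this final bookkeeping: confirming that the top of the chain carries the label $2k+2$ and that the minima carry the odd labels $2k+2i+1$ requires tracking the $L/R$ pattern and the parity conventions of Definition 3.2 through the recursion with care, the genuinely delicate point being the first square, where $w_0 = \varepsilon$ breaks the left--right symmetry and has to be treated separately.
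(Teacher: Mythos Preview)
The paper does not prove this lemma at all: it is quoted verbatim as Lemma~3.3 of \cite{vonBell+} and used as a black box, so there is no in-paper argument to compare your proposal against.

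That said, your overall strategy---identify the filters of $P'=P(w_0\cdots w_n)$ that are not already filters of $P=P(w_0\cdots w_{n-1})$, reduce to filters containing $2n+2$ but not $2n+3$, and show these are parametrized by the down-sets of the complement of the principal filter $\langle m\rangle_P$, which is a chain---is exactly the natural one, and the chain count $n-k+1$ (hence $n-k+3$ new elements of $J$) is correct.

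One caution: you phrase everything in terms of \emph{lower} order ideals (down-sets, downward closures, maximal elements of ideals), whereas in this paper $J(P)$ denotes the lattice of \emph{upper} order ideals and $\langle S\rangle$ is the filter generated by $S$. The two pictures are of course dual and the enumeration is unaffected, but the specific generator labels attached to the new elements are sensitive to the convention (for instance, the ``extra'' generator at the top of your chain $D$ is the element incomparable to $m$ in the $k$-th square, and whether that element gets the label $2k+2$ depends on which of $2n-1$ or $2n$ plays the role of $m$). You already flag this bookkeeping as the delicate point; just be sure to carry it out in the filter convention actually used here, since otherwise the named generators in your list will not literally match those in the statement.
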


To assist with the visualization of $J(P)$ and proofs to come, we define the embedding of $J(P)$ onto $\Z^2$ through the following construction.
\begin{construction}\label{embedding}
    Let $\bw = w_0 w_1 \cdots w_n$ be a generalized snake word and $k \geq 0$ the largest index such that $w_k \neq w_n$. 
    Then $J(P(\bw))$ is recursively embedded onto the integer lattice $\Z^2$ as follows:

\begin{enumerate}
    \item \textbf{[Base Step]} $J(P(w_0)) = J(P(\epsilon))$ is embedded onto $\Z^2$ 
    following description (1a) if $w_1 = R$, and following description (1b) if $w_1 = L$.
    (If $n = 0$, follow any of the $2$ descriptions.)

    \begin{enumerate}
        \item $\emptyset$ to $(1, 2)$, $\langle 0 \rangle$ to $(1, 1)$, $\langle 1 \rangle$ to $(0, 1)$,  $\langle 2 \rangle$ to $(1, 0)$,  $\langle 1, 2 \rangle$ to $(0, 0)$, and  $\langle 3 \rangle$ to $(-1, 0)$, or
        
        \item $\emptyset$ to $(2, 1)$, $\langle 0 \rangle$ to $(1, 1)$, $\langle 1 \rangle$ to $(0, 1)$,  $\langle 2 \rangle$ to $(1, 0)$,  $\langle 1, 2 \rangle$ to $(0, 0)$, and  $\langle 3 \rangle$ to $(0, -1)$.
    \end{enumerate}

    \item \textbf{[Recursive Step]} To embed $J(P(w_0, \dots, w_n))$ onto $\Z^2$, shift the embedding of \\ $J(P(w_0, \dots, w_{n-1}))$ by the vector $(1, 1)$.\footnote{We include this first step to assure the poset is embedded onto the two upper quadrants.} 
    Then $J(P(\bw))$ is embedded onto $\Z^2$ by the following:

    \begin{enumerate}
        \item if $w_n=R$, embed $\langle 2n+3 \rangle$ to $(-1, 0)$, $\langle 2n+2 \rangle$ to $(n-k + 1, 0)$, $\langle 2n+2, 2k+2 \rangle$ to $(n-k, 0)$, and $\langle 2n+2, 2k+2i+1\rangle$ to $(n-k-i, 0)$ for all $i \in [n-k]$, or
        
        \item if $w_n=L$, embed $\langle 2n+3 \rangle$ to $(0, -1)$, $\langle 2n+2 \rangle$ to $(0, n-k + 1)$, $\langle 2n+2, 2k+2 \rangle$ to $(0, n-k)$, and $\langle 2n+2, 2k+2i+1\rangle$ to $(0, n-k-i)$ for all $i \in [n-k]$.

\end{enumerate}
\end{enumerate}
\end{construction}

\begin{remark}
For the remainder of the paper, when coordinates in $\Z^2$ are associated to elements of $J(P(\bw))$ for a generalized snake word $\bw$, we are working with the assumption that $J(P(\bw))$ is embedded onto $\Z^2$ as described in Construction \ref{embedding}.    
\end{remark}

As an example, $J(P(\epsilon LRR))$ and $J(P(\epsilon RRR))$ are shown in Figure \ref{fig: J(P)}.

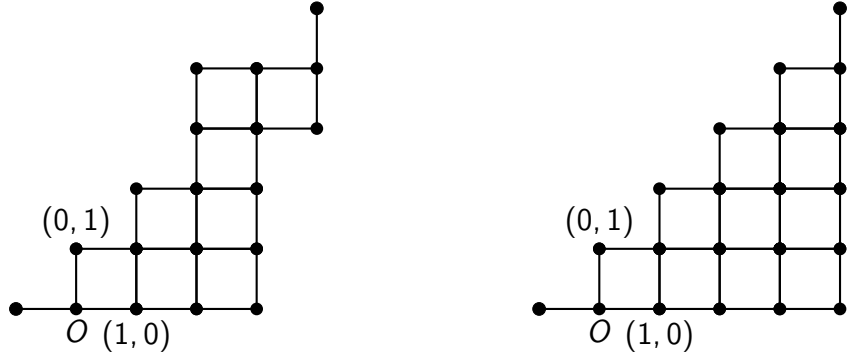
\begin{figure}[htbp]
    \centering
    \begin{tikzpicture}[scale=0.8]
    \def\xmin{-1}
    \def\xmax{4}
    \def\ymin{0}
    \def\ymax{5}
    
    \draw (0, 0) node[below] {$O$};
    \draw (1, 0) node[below] {$(1, 0)$};
    \draw (0, 1) node[above] {$(0, 1)$};

    \draw[black,thick] (0,0) rectangle (1,1);
    \foreach \x/\y in {0/0, 1/0, 0/1, 1/1}
        \fill (\x,\y) circle (3pt);
    \draw[black,thick] (1,0) rectangle (2,1);
    \foreach \x/\y in {1/0, 2/0, 1/1, 2/1}
        \fill (\x,\y) circle (3pt);
    \draw[black,thick] (2,0) rectangle (3,1);
    \foreach \x/\y in {2/0, 3/0, 0/1, 3/1}
        \fill (\x,\y) circle (3pt);
    \draw[black,thick] (1,1) rectangle (2,2);
    \foreach \x/\y in {1/1, 2/1, 1/2, 2/2}
        \fill (\x,\y) circle (3pt);
    \draw[black,thick] (2,1) rectangle (3,2);
    \foreach \x/\y in {2/1, 3/1, 2/2, 3/2}
        \fill (\x,\y) circle (3pt);
    \draw[black,thick] (2,2) rectangle (3,3);
    \foreach \x/\y in {2/2, 3/2, 2/3, 3/3}
        \fill (\x,\y) circle (3pt);
    \draw[black,thick] (2,3) rectangle (3,4);
    \foreach \x/\y in {2/3, 3/3, 2/4, 3/4}
        \fill (\x,\y) circle (3pt);
    \draw[black,thick] (3,3) rectangle (4,4);
    \foreach \x/\y in {3/3, 4/3, 3/4, 4/4}
        \fill (\x,\y) circle (3pt);
    
    \draw[black, thick] (-1, 0) -- (0 , 0);
    \draw[black, thick] (4, 4) -- (4 , 5);
    \filldraw (-1,0) circle (3pt);
    \filldraw (4, 5) circle (3pt);
\end{tikzpicture}
\hspace{2.5 cm}
    \begin{tikzpicture}[scale=0.8]
    \def\xmin{-1}
    \def\xmax{4}
    \def\ymin{0}
    \def\ymax{5}
    
    \draw (0, 0) node[below] {$O$};
    \draw (1, 0) node[below] {$(1, 0)$};
    \draw (0, 1) node[above] {$(0, 1)$};

    \draw[black,thick] (0,0) rectangle (1,1);
    \foreach \x/\y in {0/0, 1/0, 0/1, 1/1}
        \fill (\x,\y) circle (3pt);
    \draw[black,thick] (1,0) rectangle (2,1);
    \foreach \x/\y in {1/0, 2/0, 1/1, 2/1}
        \fill (\x,\y) circle (3pt);
    \draw[black,thick] (2,0) rectangle (3,1);
    \foreach \x/\y in {2/0, 3/0, 0/1, 3/1}
        \fill (\x,\y) circle (3pt);
    \draw[black,thick] (1,1) rectangle (2,2);
    \foreach \x/\y in {1/1, 2/1, 1/2, 2/2}
        \fill (\x,\y) circle (3pt);
    \draw[black,thick] (2,1) rectangle (3,2);
    \foreach \x/\y in {2/1, 3/1, 2/2, 3/2}
        \fill (\x,\y) circle (3pt);
    \draw[black,thick] (2,2) rectangle (3,3);
    \foreach \x/\y in {2/2, 3/2, 2/3, 3/3}
        \fill (\x,\y) circle (3pt);
    \draw[black,thick] (3,3) rectangle (4,4);
    \foreach \x/\y in {3/3, 4/3, 3/4, 4/4}
        \fill (\x,\y) circle (3pt);
    \draw[black,thick] (3,0) rectangle (4,1);
    \foreach \x/\y in {3/0, 3/1, 4/0, 4/1}
        \fill (\x,\y) circle (3pt);
    \draw[black,thick] (3,1) rectangle (4,2);
    \foreach \x/\y in {3/1, 3/2, 4/1, 4/2}
        \fill (\x,\y) circle (3pt);
    \draw[black,thick] (3,2) rectangle (4,3);
    \foreach \x/\y in {3/2, 3/3, 4/2, 4/3}
        \fill (\x,\y) circle (3pt);
    
    \draw[black, thick] (-1, 0) -- (0 , 0);
    \draw[black, thick] (4, 4) -- (4 , 5);
    \filldraw (-1,0) circle (3pt);
    \filldraw (4, 5) circle (3pt);
\end{tikzpicture}
    \caption{ $J(P(\epsilon LRR))$ on the left and $J(P(\epsilon RRR))$ on the right.}
    \label{fig: J(P)}
\end{figure}


\subsection{Order polytopes}\label{subsec:order_polytopes}

Let $P$ be a poset on the set of elements $[d]:=\{1,\ldots,d\}$. 
We abuse notation and write $P$ to denote the elements of $P$.
The \emph{order polytope} of $P$, popularized by Stanley~\cite{StanleyTwoPosetPolytopes}, is defined as
\[
\mathcal{O}(P) = \left\{\bx=(x_1,\ldots, x_d)\in [0,1]^d: x_i\leq x_j \text{ for }i<_Pj\right\}.
\]

An \emph{upper order ideal of $P$}, also called a \emph{filter}, is a set $A\subseteq P$ such that if $i\in A$ and $i<_Pj$, then $j\in A$.
Let $J(P)$ denote the poset of upper order ideals of $P$, ordered by reverse inclusion.
We use $\langle p_1,\dots,p_k\rangle$ to denote the upper order ideal generated by elements $p_1,...,p_k \in P$.
Let $\be_1,\ldots,\be_d$ denote the standard basis vectors of $\R^d$.
For an upper order ideal $A\in J(P)$, define the \emph{characteristic vector} $\bv_A:=\sum_{i\in A}\be_i$.
The vertices of $\mathcal{O}(P)$ are given by
\[
V(\mathcal{O}(P))=\left\{\bv_A:A\in J(P)\right\}.
\]
Define a hyperplane $\mathcal{H}_{i,j}= \{ \bx\in \R^d : x_i = x_j \}$ for $1\leq i < j \leq d$.
The set of all such hyperplanes induces a triangulation $\mathcal{T}$ of $\mathcal{O}(P)$ known as the \emph{canonical triangulation}, which has the following three important properties:
\begin{enumerate}
    \item $\mathcal{T}$ is unimodular,
    \item the maximal simplices are in bijection with the linear extensions of $P$, so the normalized volume of the order polytope is
    $$ \vol (\mathcal{O}(P)) = \# \text{ of linear extensions of } P, \text{ and} $$
    \item the simplex corresponding to a linear extension $(a_1,\ldots,a_d)$ of $P$ is
    $$ \sigma_{a_1,...,a_d} = \left\{\bx \in [0,1]^d : x_{a_1} \leq x_{a_2} \leq \cdots \leq x_{a_d}\right\}, $$
    with vertex set 
    $\{\mathbf{0}, \mathbf{e}_{a_d}, \mathbf{e}_{a_{d-1}}+ \mathbf{e}_{a_d}, \ldots ,  \mathbf{e}_{a_1}+\cdots+\mathbf{e}_{a_d} = \mathbf{1}\}.$
\end{enumerate}


\subsection{Ehrhart theory}
The \defterm{Ehrhart function} of a polytope (i.e., the convex hull of finitely many points)  $P\subset \R^n$,  is the lattice-point enumerator $L_P(t):=|tP\cap \Z^n|$, where $tP=\{t\bx:\ \bx\in P\}$. 
Ehrhart theory, developed by Eug\`ene Ehrhart in the 1960s (see, e.g., \cite{Ehrhart}) can be regarded as a discrete version of integration: the growth of the Ehrhart function provides information about the volume and surface area of $P$.
Ehrhart theory is connected to the combinatorics of simplicial complexes, as well as number theory and discrete analysis; for a comprehensive overview, see~\cite{BeckRobins}.

When $P$ is a lattice polytope (i.e., its vertices have integer coordinates), the Ehrhart function is a polynomial in $t$, with degree equal to the dimension of $P$, leading coefficient equal to its normalized volume, second-leading coefficient equal to half the normalized surface area, and constant coefficient 1.
The other coefficients of $L_P(t)$ are more mysterious, and in general can be negative. 
One of the main problems in Ehrhart theory is to characterize the coefficients that a polynomial must have in order to be the Ehrhart polynomial of a lattice polytope. 
The Ehrhart polynomial of a lattice polytope $P$ of dimension $n$ can always be written in the form $L_P(t)=\sum_{i=0}^nh_i^*\binom{t+n-i}{n}$; the sequence $(h_0^*,\dots,h_n^*)$ is called the \defterm{$h^*$-vector}. 
One can also encode $L_P(t)$ in a generating series, called the \emph{Ehrhart series}
\[
  \operatorname{Ehr}(P;z):=\sum_{t\geq 0}L_P(t) \, z^t=\frac{h^*(P;z)}{(1-z)^{n+1}} \, .
\]
The polynomial $h^*(P;z)=\sum_{i=0}^n h^*_iz^i$ is known as the \emph{$h^*$-polynomial} of~$P$ and is of degree less than $n+1$ and has nonnegative integer coefficients.
Ehrhart and $h^*$-polynomials are  important invariants of lattice polytopes, as they encode much of a polytope's geometry, arithmetic, and combinatorics. 
An important and useful result in Ehrhart theory is known as \emph{Ehrhart--Macdonald reciprocity} \cite{Ehrhart, Macdonald}, which states that for any convex lattice polytope $P$ of dimension $n$:
\begin{equation*}
    L_{P}(-t)=(-1)^nL_{P^{\circ}}(t).
\end{equation*}
This provides a relationship between a polytope and its interior and also gives an interpretation for negative values of $t$.
Evaluating an Ehrhart polynomial at $-t$ gives the lattice-point count for the $t^{\text{th}}$ dilate of its interior (up to a sign).

For simplicity, denote the $h^*$-polynomial of $\mathcal{O}(P(\bw))$ as $h^*(\bw; z)$, i.e., \[h^*(\bw; z):= h^*(\mathcal{O}(P(\bw)); z).\]
Similarly, the Ehrhart polynomial will be denoted as $L(\bw; t)$. 

\subsection{Order polytopes of generalized snake posets}

As mentioned earlier, the volume of an order polytope $\mathcal{O}(P)$ is determined by the number of linear extensions of the poset $P$. 
The authors in \cite{vonBell+} study the volume of $\mathcal{O}(P(\bw))$ by considering the recursive structure of the poset of upper order ideals of $P(\bw)$.  
We recall some of their results here.

\begin{theorem}[Theorem 3.6 and Corollaries 3.7 \& 3.8, \cite{vonBell+}]
\label{thm:O(P(w))Volume}
For $n\geq0$, let $\bw=w_0 w_1\cdots w_n$ be a generalized snake word.
If $k\geq0$ is the largest index such that $w_k\neq w_n$, then the normalized volume $v_n$ of $\mathcal{O}(P(\bw))$ is given recursively by 
\[v_n = \mathrm{Cat}(n-k+1)v_{k}
    +\left(\mathrm{Cat}(n-k+2)-2\cdot \mathrm{Cat}(n-k+1)\right)v_{k-1}\]
with $v_{-1}=1$ and $v_0=2$, where $\mathrm{Cat}(m)=\frac{1}{m+1}\binom{2m}{m}$ is the  \emph{$m$-th Catalan number}.
In particular, 
\begin{itemize}
    \item for $S_n=P(\varepsilon LRLR\cdots)$ or $S_n=P(\varepsilon RLRL\cdots)$, 
the normalized volume of $\mathcal{O}(S_n)$ with $n\geq 0$ is given recursively by
$$ v_n = 2v_{n-1} + v_{n-2},$$
with $v_{-1}=1$ and $v_0 = 2$ (these are the Pell numbers), and
\item for $\mathcal{L}_n=P(\varepsilon RRRR\cdots)$ or $\mathcal{L}_n=P(\varepsilon LLLL\cdots)$, the normalized volume of $\mathcal{O}(\mathcal{L}_n)$ with $n\geq 0$ is given by 
$$v_n = \mathrm{Cat}(n+2).$$
\end{itemize}
\end{theorem}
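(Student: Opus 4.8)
The plan is to compute the normalized volume as a count of maximal chains in the distributive lattice $J(P(\bw))$ and then extract the recursion from the planar description of that lattice. By the second listed property of the canonical triangulation, $\vol(\mathcal{O}(P(\bw)))$ equals the number of linear extensions of $P(\bw)$, which in turn equals the number of maximal chains in $J(P(\bw))$, since a linear extension is exactly a way of building up an order ideal one element at a time. Using Construction \ref{embedding}, I would realize every such maximal chain as a monotone (rank-increasing) unit-step lattice path through the snake-shaped region $R(\bw) \subseteq \Z^2$ occupied by $J(P(\bw))$, running from the image of $\widehat0$ to the image of $\widehat1$ and using only cover relations. In this way the whole problem reduces to counting monotone lattice paths confined to the region $R(\bw)$.

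Next I would set up the recursion by peeling off the final run of equal letters. If $k$ is the largest index with $w_k \neq w_n$, then $w_{k+1} = \cdots = w_n$ is a straight run of length $n-k$, and by Lemma \ref{lemma: the labeling of J(P)} the ideals adjoined in passing from $P(w_0 \cdots w_k)$ to $P(\bw)$ assemble, in the embedding, into a single straight staircase block $B$ glued to the region of $w_0 \cdots w_k$ along the turn at index $k$. I would then condition each path on how it negotiates this turn. Either the path traverses $B$ entirely before entering the earlier region, in which case the two portions are independent and the count factors as $\mathrm{Cat}(n-k+1)\cdot v_k$; or it cuts across the corner of the turn, joining directly to the region of $w_0 \cdots w_{k-1}$ and contributing the second term. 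The Catalan factor comes from the classical fact that the number of monotone paths across a straight staircase block is a Catalan number, with the precise index $n-k+1$ determined by matching the size of $B$.

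The delicate point, and the step I expect to be the main obstacle, is justifying the exact coefficient $\mathrm{Cat}(n-k+2) - 2\,\mathrm{Cat}(n-k+1)$ of $v_{k-1}$. I would obtain it by an inclusion–exclusion at the turn: paths that reach back into $R(w_0 \cdots w_{k-1})$ biject with monotone paths through the once-enlarged block of size $n-k+2$, giving the gross count $\mathrm{Cat}(n-k+2)$, from which one removes the $2\,\mathrm{Cat}(n-k+1)$ paths that hug either of the two boundary edges meeting the previous turn and have therefore already been tallied in the $v_k$ term. Making the phrases ``traverses entirely,'' ``cuts the corner,'' and ``hugs a boundary edge'' precise, namely specifying the exact cut of $R(\bw)$ across which the path factorizes and verifying that no path is double-counted or omitted, is where the real care lies.

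Finally I would verify the base data and the two specializations. The empty word yields a region that is a single edge, so $v_{-1} = 1$, while $P(\epsilon)$ has exactly two linear extensions, giving $v_0 = 2$; these anchor the recursion. For an alternating word one has $k = n-1$ and $n-k = 1$, so with $\mathrm{Cat}(2) = 2$ and $\mathrm{Cat}(3) = 5$ the recursion collapses to $v_n = 2v_{n-1} + v_{n-2}$, the Pell recursion. For a constant word one has $k = 0$ for every $n$, and substituting $v_0 = 2$, $v_{-1} = 1$ makes the two terms cancel down to $v_n = \mathrm{Cat}(n+2)$, recovering both corollaries.
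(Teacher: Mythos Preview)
The paper does not prove this theorem directly; it is quoted from \cite{vonBell+} as background. The only argument the present paper offers is the remark after Theorem~\ref{thm: recurrence for h*}: substitute $z=1$ into the $h^*$-recurrence
\[
h^{*}(\bw;z)=h^{*}((\bw^{*})';z)\,h^{*}(\bq;z)+h^{*}(\bw^{*};z)\,h^{*}(\bq';z)-(z+1)\,h^{*}((\bw^{*})';z)\,h^{*}(\bq';z),
\]
and use $h^*(\text{ladder of length }m;1)=\mathrm{Cat}(m+2)$ from Theorem~\ref{thm:h*_ladder} for the pieces $\bq$ and $\bq'$, which are ladders of lengths $n-k$ and $n-k-1$. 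This yields $v_n=\mathrm{Cat}(n-k+2)\,v_{k-1}+\mathrm{Cat}(n-k+1)\,v_k-2\,\mathrm{Cat}(n-k+1)\,v_{k-1}$, i.e.\ exactly the stated recurrence.

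Your plan is essentially the $z=1$ shadow of that same inclusion--exclusion, so the strategy is sound; the corollaries and base cases you check are correct. The weakness is the framing of the main step. Your dichotomy ``traverses $B$ entirely'' versus ``cuts the corner'' is not a clean disjoint decomposition, and you end up patching the second case with an inclusion--exclusion whose terms (``hugs either boundary edge'') are not pinned down. The clean way---and what the paper's proof of Theorem~\ref{thm: recurrence for h*} does---is to fix the two consecutive diagonal elements $p=\langle 2k+1,2k+2\rangle$ and $q=\langle 2k-1,2k\rangle$ in $J(P(\bw))$; every maximal chain passes through $p$ or $q$, and you apply inclusion--exclusion on that pair. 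Chains through $p$ factor as $v_{k-1}\cdot\mathrm{Cat}(n-k+2)$, chains through $q$ as $v_k\cdot\mathrm{Cat}(n-k+1)$, and chains through both contribute $2\,v_{k-1}\cdot\mathrm{Cat}(n-k+1)$ (the factor $2$ being the two ways to go from $p$ to $q$). Rewriting your outline in these terms removes the vagueness you flagged.
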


The next result states that the normalized volume of an order polytope $\mathcal{O}(P(\bw))$ of a generalized snake poset is bounded above and below by the volume of the order polytope of the ladder poset and the snake poset, respectively.
Note that the dimension of $\mathcal{O}(P(\bw))$ is $d: = 2 n + 4$ for a snake word of length $n$.

\begin{theorem}[Theorem 3.10, \cite{vonBell+}]\label{thm:minmaxvolumes}
For any generalized snake word $\bw= w_0w_1\cdots w_n$ of length $n$,
$$\vol \mathcal{O}(S_n) \leq 
\vol \mathcal{O}(P(\bw)) \leq 
\vol \mathcal{O}(\mathcal{L}_n).$$
\end{theorem}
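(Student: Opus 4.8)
The plan is to induct on the length $n$, using the recursive volume formula of Theorem~\ref{thm:O(P(w))Volume} and the nonnegativity of its coefficients. Fix a word $\bw=w_0\cdots w_n$ with break index $k$ (the largest index with $w_k\neq w_n$), and set $m:=n-k\ge 1$, the length of the terminal run $w_{k+1}=\cdots=w_n$; the ladder is the case $m=n$ (so $k=0$) and the regular snake is the case $m=1$ (so $k=n-1$). The recursion reads $v_n=\mathrm{Cat}(m+1)\,v_k+\bigl(\mathrm{Cat}(m+2)-2\,\mathrm{Cat}(m+1)\bigr)v_{k-1}$, where $v_k$ and $v_{k-1}$ are the volumes of the proper prefixes $w_0\cdots w_k$ and $w_0\cdots w_{k-1}$. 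The first step I would record is that both coefficients are nonnegative: peeling the two boundary terms off the Catalan convolution $\sum_{i=0}^{N}\mathrm{Cat}(i)\mathrm{Cat}(N-i)=\mathrm{Cat}(N+1)$ yields $\mathrm{Cat}(N+1)-2\,\mathrm{Cat}(N)=\sum_{i=1}^{N-1}\mathrm{Cat}(i)\mathrm{Cat}(N-i)\ge 0$, and setting $N=m+1$ gives $\mathrm{Cat}(m+2)-2\,\mathrm{Cat}(m+1)\ge 0$ for $m\ge 1$. Hence $v_n$ is a coefficient-wise nondecreasing function of the two smaller volumes $v_k,v_{k-1}$, which is what makes the induction go through. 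The base case $n=0$ is immediate, since $S_0=\mathcal{L}_0=P(\varepsilon)$ and both inequalities are equalities.

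For the inductive step I would treat the two bounds in parallel. For the upper bound, the induction hypothesis gives $v_k\le \mathrm{Cat}(k+2)$ and $v_{k-1}\le \mathrm{Cat}(k+1)$ (the ladder volumes, attained simultaneously by the ladder prefix), so monotonicity yields
\[
v_n\ \le\ \mathrm{Cat}(m+1)\,\mathrm{Cat}(k+2)+\bigl(\mathrm{Cat}(m+2)-2\,\mathrm{Cat}(m+1)\bigr)\mathrm{Cat}(k+1)\ =:\ f(m).
\]
For the lower bound, writing $p_j:=\vol\mathcal{O}(S_j)$ for the Pell volumes, the induction hypothesis gives $v_k\ge p_k$ and $v_{k-1}\ge p_{k-1}$, so
\[
v_n\ \ge\ \mathrm{Cat}(m+1)\,p_k+\bigl(\mathrm{Cat}(m+2)-2\,\mathrm{Cat}(m+1)\bigr)p_{k-1}\ =:\ g(m).
\]
It therefore suffices to prove the two numerical inequalities $f(m)\le \mathrm{Cat}(n+2)$ and $g(m)\ge p_n$ for all $1\le m\le n$. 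Both are tight at the relevant extreme: a direct substitution gives $f(n)=\mathrm{Cat}(n+2)$ and $g(1)=2p_{n-1}+p_{n-2}=p_n$ (here the bracket equals $\mathrm{Cat}(3)-2\,\mathrm{Cat}(2)=1$, recovering the Pell recursion).

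To finish, I would show that $f(m)$ and $g(m)$ are each nondecreasing in the run length $m$; combined with $f(n)=\mathrm{Cat}(n+2)$ and $g(1)=p_n$, this gives $f(m)\le f(n)$ and $g(m)\ge g(1)$ and closes the induction. For $f$, substituting $\mathrm{Cat}(m+2)-2\,\mathrm{Cat}(m+1)=\sum_{i=1}^{m}\mathrm{Cat}(i)\mathrm{Cat}(m+1-i)$ and expanding the remaining Catalan factors by the same convolution reduces $f(m+1)-f(m)\ge 0$ to a finite sum of manifestly nonnegative Catalan products, so this case is routine. The function $g$ is the delicate one, as it mixes the Catalan coefficients with the Pell sequence; here I would use either the recursion $p_j=2p_{j-1}+p_{j-2}$ or the closed form $p_j=\tfrac12\bigl((1+\sqrt2)^{\,j+1}+(1-\sqrt2)^{\,j+1}\bigr)$ to control the ratios $p_k/p_{k-1}$ and verify that incrementing $m$ inflates the dominant Catalan factor faster than it deflates the Pell factor. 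I expect this Catalan--Pell comparison to be the main obstacle: the Catalan coefficients grow like $4^m$ while the Pell numbers grow only like $(1+\sqrt2)^n$, so the bound is very loose for large $m$ but essentially tight at $m=1$, and a successful argument must track this crossover rather than rely on a single crude estimate. Conceptually, all of this reflects that a longer run of equal letters creates fewer order relations between the two columns of $P(\bw)$, hence more linear extensions, so that runs (maximized by the ladder) increase the volume while alternations (maximized by the snake) decrease it.
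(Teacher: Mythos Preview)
Your reduction to the two numerical inequalities $f(m)\le \mathrm{Cat}(n+2)$ and $g(m)\ge p_n$ is sound, but the step you call ``routine'' is in fact false: the function $f$ is \emph{not} nondecreasing in $m$. Writing out
\[
f(m)=\mathrm{Cat}(m+1)\,\mathrm{Cat}(k+2)+\mathrm{Cat}(m+2)\,\mathrm{Cat}(k+1)-2\,\mathrm{Cat}(m+1)\,\mathrm{Cat}(k+1),\qquad k=n-m,
\]
one sees immediately that the expression is symmetric under $m\leftrightarrow k$, so $f(m)=f(n-m)$; in particular $f(1)=f(n-1)$, which already rules out monotonicity on $[1,n]$ unless $f$ is constant there. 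Concretely, for $n=4$ one computes $f(1)=98$, $f(2)=90$, $f(3)=98$, $f(4)=132$, so $f$ dips in the middle. Thus the upper-bound argument, as written, breaks down; the target inequality $f(m)\le \mathrm{Cat}(n+2)$ is still true, but it needs a genuinely different justification than ``$f$ is increasing and $f(n)=\mathrm{Cat}(n+2)$.'' For the lower bound you already flag the $g$--monotonicity as the main obstacle and do not prove it, so that half is explicitly incomplete.

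The paper does not argue via the volume recursion at all. It instead proves a strictly stronger statement (Theorem~\ref{thm: coef mono}): for the swap operator $\varphi_i$ (flipping all letters from position $i$ on) applied at an index with $w_{i-1}=w_i$, one has the coefficient-wise inequality $[z^k]h^*(\varphi_i(\bw);z)\le [z^k]h^*(\bw;z)$. This is established by an explicit injection $\phi_i$ from maximal chains of $J(P(\varphi_i(\bw)))$ into maximal chains of $J(P(\bw))$ that preserves the number of red $L$-turns (hence the ascent statistic). Since any snake word is reached from the ladder by a sequence of such swaps, and the regular snake is reached from any word likewise, summing over $k$ recovers Theorem~\ref{thm:minmaxvolumes}. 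The combinatorial-injection route sidesteps the Catalan/Pell estimates entirely and yields the finer $h^*$-vector comparison as a bonus; your inductive approach, even if the two Catalan inequalities were supplied, would give only the volume bound.
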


Recall that a lattice polytope is \emph{Gorenstein} if there exists a positive integer $k$ such that
$$
(k-1)P^\circ \cap \Z^n = \emptyset, |kP^\circ \cap \Z^n| =1, |tP^\circ \cap \Z^n| =|(t-k)P \cap \Z^n|
$$
for all integers $t>k$.

This is equivalent to the polytope having a symmetric $h^*$-polynomial \cite{BeckRobins}. 
Furthermore, the order polytope of a poset $P$ is Gorenstein if and only if $P$ is graded (i.e., all maximal
chains have the same length).
Observe that $P(\bw)$ is graded, thus we have that $\mathcal{O}(P(\bw))$ is Gorenstein and has a symmetric $h^*$-polynomial.

By also applying Ehrhart reciprocity and the above facts, we expect the root distribution of $L_{\mathcal{O}(P(\bw))}(t)$ to be symmetric, which motivates Theorem \ref{thm:roots} and the computation of its exact line of symmetry.

\begin{remark}\label{endswithR}
To determine the Ehrhart polynomial (or $h^*$-polynomial) of an order polytope $\mathcal{O}(P)$ one focuses on the poset structure rather than the natural number labeling of the elements in the poset. 
Hence, note that the Ehrhart polynomial of $\calO(P(\bw))$ corresponds with the Ehrhart polynomial of $\calO(P(\Bar{\bw}))$ where $\Bar{\bw}$ is a generalized snake word obtained by replacing all $R$'s and $L$'s in $\bw$ with $L$'s and $R$'s, respectively. Because of this, we only consider the generalized snake words that end with the letter $R$ (without loss of generality), unless explicitly stated otherwise.
\end{remark}

\begin{theorem} \label{thm:roots}
For any generalized snake word $\bw$ of length $n$, we have the following:
    \begin{enumerate}[topsep=1ex,itemsep=1ex,partopsep=1ex,parsep=1ex]
        \item $(t + 1)(t + 2)\cdots (t + n + 3)  \quad|\quad L(\bw; t)$, \label{thm:roots_item1}
        
        \item $L(\bw; t) = L(\bw; -n-4-t)$, and \label{thm:roots_item2}
        
        \item all integer roots of $L(\bw; t)$ lie in the interval $[- n - 4, 0]$. \label{thm:roots_item3}
    \end{enumerate}
\end{theorem}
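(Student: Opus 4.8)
The plan is to reduce all three statements to a single combinatorial fact about order-preserving labelings, accessed through Ehrhart--Macdonald reciprocity. Recall that the lattice points of $t\,\mathcal{O}(P(\bw))$ are exactly the order-preserving maps $f\colon P(\bw)\to\{0,1,\dots,t\}$, and that the interior lattice points of $t\,\mathcal{O}(P(\bw))$ are the strictly order-preserving maps $f\colon P(\bw)\to\{1,\dots,t-1\}$ (all facet inequalities made strict). Since $P(\bw)$ is graded of rank $n+2$, every maximal chain has exactly $n+3$ elements, so a strictly order-preserving map into $\{1,\dots,t-1\}$ exists if and only if $t-1\ge n+3$, i.e.\ $t\ge n+4$. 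Writing $d=2n+4$ for the dimension of $\mathcal{O}(P(\bw))$, we have $(-1)^d=1$, so reciprocity reads $L(\bw;-t)=L_{\mathcal{O}(P(\bw))^{\circ}}(t)$; this is the identity that converts negative-argument evaluations into interior counts.

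For part \eqref{thm:roots_item1} I would evaluate at $t=-j$ for $1\le j\le n+3$. By the reciprocity identity above, $L(\bw;-j)$ equals the number of strictly order-preserving maps $P(\bw)\to\{1,\dots,j-1\}$, which is $0$ by the chain obstruction, since $j-1\le n+2<n+3$. Hence $-1,-2,\dots,-(n+3)$ are roots of $L(\bw;t)$, which gives the divisibility $(t+1)(t+2)\cdots(t+n+3)\mid L(\bw;t)$.

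For part \eqref{thm:roots_item2} I would first pin down the Gorenstein index. The computation above shows $n+4$ is the least dilation factor admitting an interior lattice point; moreover at $t=n+4$ the admissible values lie in $\{1,\dots,n+3\}$, and gradedness forces the unique such map to be $f(x)=\rho(x)+1$, where $\rho$ is the rank function (strict increase along every maximal $(n+3)$-element chain leaves no freedom). Thus the Gorenstein index is exactly $k=n+4$, so $L_{\mathcal{O}(P(\bw))^{\circ}}(t)=L(\bw;t-(n+4))$ as polynomials. Combining this with the reciprocity identity $L_{\mathcal{O}(P(\bw))^{\circ}}(t)=L(\bw;-t)$ and replacing $t$ by $-t$ yields $L(\bw;t)=L(\bw;-n-4-t)$.

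Finally, part \eqref{thm:roots_item3} follows by sandwiching, and this is also where the only real subtlety sits. For every integer $m\ge 0$ the dilate $m\,\mathcal{O}(P(\bw))$ contains the origin, so $L(\bw;m)\ge 1>0$; in particular no integer in $[0,\infty)$ is a root. Applying the symmetry just established, for every integer $m\le -n-4$ we get $L(\bw;m)=L(\bw;-n-4-m)\ge 1>0$, so no integer in $(-\infty,-n-4]$ is a root either. Hence every integer root lies strictly between $-n-4$ and $0$, and in particular in $[-n-4,0]$. The main obstacle is the index computation in part \eqref{thm:roots_item2}: the remaining steps are bookkeeping with reciprocity, but the claim $k=n+4$ requires both the nonexistence of interior points below the threshold and the uniqueness of the minimal interior point, and it is the gradedness of $P(\bw)$---rather than mere width two---that makes $f=\rho+1$ the only strictly order-preserving labeling into $\{1,\dots,n+3\}$.
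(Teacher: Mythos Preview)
Your proof is correct and follows the same overall architecture as the paper's: Ehrhart--Macdonald reciprocity together with the graded rank structure of $P(\bw)$ drives all three parts, and your arguments for \eqref{thm:roots_item1} and \eqref{thm:roots_item3} match the paper's essentially verbatim.

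The one noteworthy difference is in \eqref{thm:roots_item2}. The paper writes down an explicit bijection $T\colon t\mathcal{O}(P(\bw))\cap\Z^d \to (n+4+t)\mathcal{O}(P(\bw))^{\circ}\cap\Z^d$ that shifts each coordinate at rank $i$ by $i+1$, and then argues the two polynomials agree at all positive integers. You instead invoke the Gorenstein property (already recorded in the paper's preliminaries as a consequence of gradedness), compute the index as $n+4$ via your uniqueness argument for the map $f=\rho+1$, and read off $L_{\mathcal{O}(P(\bw))^{\circ}}(t)=L(\bw;t-(n+4))$ directly. Your route is shorter and more conceptual, leaning on a standard structural fact; the paper's explicit bijection is self-contained and in effect re-derives that fact by hand. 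Both arrive at the same functional equation, and your ``rank-shift'' map $f=\rho+1$ is exactly the unique interior point the paper's bijection $T$ would produce at $t=0$.
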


\begin{proof}
Proving Theorem \ref{thm:roots}(\ref{thm:roots_item1}), is equivalent, by Ehrhart--Macdonald reciprocity, to showing that there are no interior integer points in the $t^{\text{th}}$ dilate of $\mathcal{O}(P(\bw))$ when $1 \leq t \leq n + 3$, for any generalized snake word $\bw$ of length $n$.

Towards that goal, let $\operatorname{rk}_i=\{2n+3-2i, 2n+4-2i\}$, for $1\leq i \leq n+1$, be the set of rank $i$ elements in $P(\bw)$ with $\operatorname{rk}_0 = \{2n + 3\}$ and $\operatorname{rk}_{n + 2}  = \{0\}$.
Consider, also, the associated order polytope $\mathcal{O}(P(\bw))$ that has dimension $d=2n+4$.
We will show that when $t > n + 3$,  $\mathcal{O}(P(\bw))^{\circ}  \cap \frac{1}{t} \Z^d \neq \varnothing$.
Assume that there is an integer point $q \in  \mathcal{O}(P(\bw))^{\circ}  \cap \frac{1}{t} \Z^d $.
Recall from \cite{StanleyTwoPosetPolytopes}, that a point $q \in \mathcal{O}(P(\bw))$ can be interpreted as a function  $P \to \R_{\geq 0}$ and then embedded to $\R^d$, so we use $q(i)$ to denote its value at $i \in P(\bw)$.
Notice that a point $q$ in $\mathcal{O}(P(\bw))$ is only on the boundary when $q(i) = 0$ or $1$, or when $q(i) = q(j)$ for some $i, j \in P(\bw)$. 
Since $q$ is in the interior, thus does not lie on the boundary of $\mathcal{O}(P(\bw))$, the following strict inequalities hold: 
\[q(0) < 1,\; \; q(2n +3 ) > 0,\; \text{ when } q(i) \neq q(j), \text{ for } i \in \operatorname{rk}_m,\; j \in \operatorname{rk}_{m + 1}.\]
Since $q \in \frac{1}{t}\Z^d$, $tq(2n + 3)$ is an integer, $tq(2n + 3) \geq 1$ as $tq(2n + 3) > 0$. 
    So, it follows that:
    \[tq(0) < t,\; tq(2n + 3) \geq 1,\; tq(i) \neq tq(j) \in \Z,\; i \in \operatorname{rk}_m,\; j \in \operatorname{rk}_{m + 1}.\]
    Thus, $t q(0) \geq n + 3 $, which entails that $ t > n + 3$.
    Hence,  when $1 \leq t \leq n + 3$, we have that $\mathcal{O}(P(\bw))^{\circ}  \cap \frac{1}{t} \Z^d = \varnothing$.
    
    Equivalently, when $1 \leq t \leq n + 3$, $L(\mathcal{O}(P(\bw))^{\circ}; t) = 0$. 
    Applying Ehrhart--Macdonald reciprocity to the aforementioned statement, when $ -1 \geq t \geq - n -3, t \in \N$, $L(\mathcal{O(\bw)}(P); t) = 0$, as desired. 

Next, we demonstrate Theorem \ref{thm:roots}(\ref{thm:roots_item2}) by providing a bijection.
By Ehrhart--Macdonald reciprocity, we may equivalently show that \[L(\mathcal{O}(P(\bw)); t) = L({\mathcal{O}(P(\bw))^{\circ}}; n + 4 + t).\] 
For any $q \in t \mathcal{O}(P(\bw)) \cap \Z^{d}$, define $T: t \mathcal{O}(P(\bw)) \cap \Z^{d} \rightarrow (n + 4 + t)\mathcal{O}(P)^{\circ} \cap \Z^{d}$ as follows:
     \[ T(q(2n + 3 -2i)) = \frac{1}{n + 4 + t}(tq(2n + 3 - 2i) + i + 1) \; \text{ and }\]
    \[T(q(2n + 4 -2i)) = \frac{1}{n + 4 + t}(tq(2n + 4 - 2i) + i + 1).\]
    Note that $T(q)(2n + 3 -2i) \neq T(q)(2n + 4 -2i)$, so $T$ is injective, since it also has an inverse, $T$ is bijective. 
    Furthermore, $L(t)$ and $L(- n -4 - t)$ agree for all positive integers, since they are finite-degree polynomials, they agree on $\R$. 

Finally, Theorem \ref{thm:roots}(\ref{thm:roots_item3}) is proven by combining \ref{thm:roots}(\ref{thm:roots_item1}) and \ref{thm:roots}(\ref{thm:roots_item2}).   
\end{proof}

\begin{example}
    The order polytope of the ladder, $\mathcal{O}(P(\epsilon RR))$, 
 is an $8$-dimensional lattice polytope with Ehrhart polynomial
    \[ L(\epsilon RR ; t)
    = \left(\frac{1}{2880}\right) \cdot(t+1) \cdot(t+5) \cdot(t+2)^2 \cdot(t+3)^2 \cdot(t+4)^2\]
    
    with volume $\frac{1}{2880}$ and $h^*$-polynomial
    \[h^*(\epsilon RR ; z) = 
    z^3 + 6 z^2 + 6 z + 1.\]
    
 The order polytope of the regular snake, $\mathcal{O}(P(\epsilon LR))$, is an $8$-dimensional convex polytope with Ehrhart polynomial
\[L(\epsilon LR; t)
    =\left(\frac{1}{3360}\right) \cdot(t+1) \cdot(t+2) \cdot(t+4) \cdot(t+5) \cdot(t+3)^2 \cdot\left(t^2+6 t+\frac{28}{3}\right)\]
    with volume $\frac{1}{3360}$ and $h^*$-polynomial
    \[h^*(\epsilon LR ; z) = 
    z^3 + 5 z^2 + 5 z + 1.\] 
    These computations were done with Sage\cite{Sage}.
\end{example}

\subsection{Notation}\label{subsec:notation}
\text{} 

 We conclude our preliminaries with a notation table for reference. 
Note that it includes notation for terms that will be defined in later sections.

\begin{table}[ht]
    \centering
    \caption{Notation Table}
    \begin{tabular}{|c|c|}
      \hline
        \textbf{Symbol} & \textbf{Meaning} \\ \hline
        $\bw$ & a generalized snake word \\ \hline
        $P(\bw)$ & the generalized snake poset corresponding to the generalized snake word $\bw$ \\ \hline
        $J(P(\bw))$ & the poset of upper order ideals of poset $P(\bw)$, ordered by reverse inclusion \\ \hline
        $\mathcal{O}(P(\bw))$ & the order polytope of poset $P(\bw)$ \\ \hline

        $\mathcal{P}^{\circ}$ & the interior of the polytope $\mathcal{P}$ \\ \hline
        $L(\calP; t)$ & the number of lattice points in $t\calP$ where $t \in \Z$ \\ \hline
        $h^*(\calP ; z)$ & the $h^*$-polynomial of a polytope $\calP$ \\ \hline
        $h^*(\bw ; z)$ & the $h^*$-polynomial of a polytope $\calO(P(\bw))$ (:= $h^*(\calO(P(\bw)); z)$) \\ \hline
        $\mathcal{C}_k(P)$ & the set of length $k$ chains in poset $P$. \\ \hline
        
        $\calD_k$ & the set of linear extensions $\pi$ of a given poset such that $\operatorname{asc}(\pi) = k$ \\ \hline
        $\calM$ & the set of all maximal chains of a given poset\\ \hline
        
        $\langle a_1, \dots, a_n \rangle$ & an upper order ideal generated by elements $a_1, \dots, a_n$ of a given poset\\ \hline

        $\bw[i:j]$ & the subword of $\bw = w_0 \dots w_n$ from the $i^{\text{th}}$ index to the $j^{\text{th}}$ \\ \hline
        $P(\bw)_{[i:j]}$ & the sub-poset of  $P(\bw)$ corresponding to subword $\bw[i:j]$ (Definition \ref{segment}) \\ \hline
        $J(P(\bw))_{[i:j]}$ & the sub-poset of  $J(P(\bw))$ corresponding to subword $\bw[i:j]$ (Definition \ref{segment}) \\ \hline        
    \end{tabular}
    \label{tab:notation}
\end{table}


\section{A combinatorial formula for the chain polynomial}
Given a triangulation $T$ of a $d$-polytope, let $f_k$ be the number of $k$-simplices of $T$, which are encoded in the following $h$-polynomial:
\[h_T(z):=\sum_{k=-1}^d f_kz^{k+1}(1-z)^{d-k}.\]
It is known that the $h^*$-polynomial of a lattice polytope is equal to the $h$-polynomial of any unimodular triangulation of the polytope \cite{BetkeMcMullen, StanleyDecompositions}. 
Hence, by considering the canonical triangulation $\mathcal{T}$ of $\mathcal{O}(P(\bw))$ as defined in \cite{StanleyTwoPosetPolytopes} and mentioned above in Subsection \ref{subsec:order_polytopes}, we can study the $h$-polynomial accompanying $\mathcal{T}$ to gain insight into the structure of $h^*(\mathcal{O}(P(\bw));z)$. 
Furthermore, by work of Stanley \cite[Section 5]{StanleyTwoPosetPolytopes} we know that the face numbers $f_k$ of $\mathcal{T}$ coincide with the the number of chains of a length $k$ in $J(P(\bw))$, denoted in what follows as $c_k$. 
Therefore, in order to determine the $h$-polynomial of $\mathcal{T}$ for $\mathcal{O}(P(\bw))$ (and hence $h^*(\bw; z)$), it suffices to determine a formula for the chain polynomial $\mathcal{C}(z)$ of $J(P(\bw))$.
Hence, we can recover $h^*(\bw; z)$ from $\mathcal{C}(z) = \sum_{k = 0}^d c_k z^{k + 1} $ by the following formula:
\[
h^*(\bw; z) =
\sum_{k = 0}^d c_k z^{k} (1 - z)^{d - k}.
\]

We derive $\mathcal{C}(z)$ via recurrence and encapsulate the formula by enumerating \emph{weighted valid paths}.
To start, we consider the embedding of the Hasse diagram of $J(P)$ onto $\Z^2$. 
Note that elements get larger going rightward or upward, following the poset order. 
For fixed coordinates, $i$ and $j$, we define $J_{i, j}$ as the subposet of $J(P)$ where we take the elements embedded in $(x, y)$ where $x \geq i$ and $y \geq j$, i.e., \[J_{i,j}:= \{(x, y): x \geq i \text{ and } y \geq j\} \bigcap J(P).\] 
See Figure \ref{fig: C_{i,j} example} for two examples of $J_{i,j}$.
Next, denote the corresponding chain polynomial of $J_{i,j}$ by $\mathcal{C}_{i,j}(z)$.
Furthermore, for those elements in $J(P)$ where $y=j$, we can consider a \emph{chain polynomial at height} $j$ and point out that those polynomials can be expressed as linear combination of chain polynomials at height $j+1$, as the next lemma shows. 

\begin{lemma}\label{lemma:chain_polynomial}
Consider the set of elements of $J(P)$ at height $j$: $\{(i_{1,j}, j), (i_{2,j}, j), \cdots, (i_{m_j,j}, j)\}$.
The chain polynomial $\mathcal{C}_{i,j}(z)$ is given recursively by
\begin{equation}\label{eq: chain recur formula}
    \mathcal{C}_{i,j}(z) = \sum\limits_{k = 1}^{m_{j + 1}} z^{\delta_{i, i_{k,j + 1}}} (1 + z)^{i_{k,j + 1} - i} \mathcal{C}_{i_{k,j + 1}, j + 1}(z)
\end{equation}
when $i \geq j$ and
\[\delta_{i, i_{k,j + 1}} = \begin{cases}
        1, & i \neq i_{k,j + 1}\\
        0, & i = i_{k,j + 1}.
    \end{cases}
    \]
When $i < j$, $\mathcal{C}_{i,j}(z)$ is given by 
\begin{equation}\label{eq: chain recur formula2}
    \mathcal{C}_{i,j}(z) = (1 + z)^{j - i} \mathcal{C}_{j,j}(z).
\end{equation}
\end{lemma}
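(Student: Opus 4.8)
The plan is to prove both formulas by peeling off the bottom row (the elements at height $j$) of the region $J_{i,j}$ and inducting downward on $j$, the base case being the top row, where $J_{i,j}$ is a single chain and its chain polynomial is a power of $(1+z)$. Throughout I would first record two geometric features of Construction \ref{embedding} as preliminary observations: (i) the elements of $J(P)$ at a fixed height $j$ occupy a contiguous interval of columns $\{i,i+1,\ldots,b_j\}$, so they form a single chain; and (ii) for $y\ge j+1$, an element $(p,j)$ lies below $(x,y)$ in $J(P)$ precisely when $p\le x$, i.e. comparability restricted to the staircase reduces to the componentwise order. These two facts are exactly what make the row-by-row decomposition exact, and they should be extracted directly from the recursive embedding.

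For the main recurrence (the case $i\ge j$), I would take an arbitrary chain $C$ in $J_{i,j}$ and split it as $C=R\sqcup D$, where $R$ is the set of elements of $C$ at height exactly $j$ and $D$ the set at height $\ge j+1$. By observation (i), $R$ lies in one row, so it is automatically a chain, recorded by its set of columns inside $\{i,\ldots,b_j\}$; and $D$ is a chain among the elements of height $\ge j+1$. By observation (ii), $R\sqcup D$ is again a chain if and only if, letting $p$ be the largest column occurring in $R$, every column of $D$ is at least $p$; equivalently, $D$ is an \emph{arbitrary} chain of $J_{p,j+1}$. Summing over $p$, the admissible $R$ with top column $p$ consist of $(p,j)$ together with any subset of $\{(i,j),\ldots,(p-1,j)\}$, contributing the weight $z(1+z)^{p-i}$, while the admissible $D$ contribute $\mathcal{C}_{p,j+1}(z)$, and the case $R=\varnothing$ contributes $\mathcal{C}_{i,j+1}(z)$. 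This yields the intermediate identity
\[
\mathcal{C}_{i,j}(z)=\mathcal{C}_{i,j+1}(z)+\sum_{p=i}^{b_j} z(1+z)^{p-i}\,\mathcal{C}_{p,j+1}(z),
\]
and the final step is to reindex this sum over the height-$(j+1)$ elements $(i_{k,j+1},j+1)$, using how the column-interval at height $j+1$ interlaces with that at height $j$, to collect the contributions into the stated factors $z^{\delta_{i,i_{k,j+1}}}(1+z)^{i_{k,j+1}-i}$, where the aligned corner element is what the exponent $\delta_{i,i_{k,j+1}}$ tracks.

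The case $i<j$ is a direct structural factorization rather than a recursion. Here I would show, again from Construction \ref{embedding} (and for $i$ within the column-range of row $j$, i.e. $i$ at least the left endpoint of that row), that $J_{i,j}\setminus J_{j,j}$ is exactly the set of $j-i$ bottom-row elements $(i,j),(i+1,j),\ldots,(j-1,j)$, that these are mutually comparable (same row), and that each lies strictly below every element of $J_{j,j}$. Consequently every chain of $J_{i,j}$ decomposes uniquely as a freely chosen subchain of these $j-i$ elements followed by a chain of $J_{j,j}$, with no compatibility constraint, and taking generating functions gives the product $(1+z)^{j-i}\mathcal{C}_{j,j}(z)$.

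I expect the main obstacle to be the bookkeeping in the reindexing step of the main recurrence: translating the clean ``sum over the top row-$j$ column $p$'' into the stated ``sum over height-$(j+1)$ elements'' requires controlling exactly how the two consecutive column-intervals overlap and carefully accounting for the aligned corner element $(i_{k,j+1},j)$; this is the delicate boundary contribution governed by $\delta_{i,i_{k,j+1}}$, and it is the point at which an off-by-one in the weight of the leftmost (aligned) term is easiest to make. Establishing observations (i) and (ii) uniformly for all generalized snake words — in particular controlling the left boundary of the staircase, whose behavior depends on the last letters of $\bw$ — will also require a short case analysis tied to the recursive description of the embedding.
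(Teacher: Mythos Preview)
Your approach is essentially the same as the paper's: both proofs decompose an arbitrary chain of $J_{i,j}$ by stripping off its intersection $R$ with the bottom row, record the rightmost column $p$ of $R$, and observe that the remaining chain is an arbitrary chain in $J_{p,j+1}$, which yields exactly your intermediate identity $\mathcal{C}_{i,j}(z)=\mathcal{C}_{i,j+1}(z)+\sum_{p} z(1+z)^{p-i}\mathcal{C}_{p,j+1}(z)$. The paper carries this out coefficient-by-coefficient (and only writes it explicitly for the ladder, declaring the general case analogous), whereas you phrase it directly at the generating-function level and state the two geometric facts (i) and (ii) up front; your identification of the reindexing step over height-$(j+1)$ elements as the only delicate boundary bookkeeping is accurate and in fact more careful than what the paper spells out.
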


We may think of Lemma \ref{lemma:chain_polynomial} as \emph{expanding chain polynomials height by height}.
In particular, under our notation:
\[\mathcal{C}_{-1,0}(z) = \mathcal{C}(z).\]
To avoid a disarray of notation, we prove Lemma \ref{lemma:chain_polynomial} in the specific case where $P$ is a ladder.
We note that in general, the proof structure works similarly. 
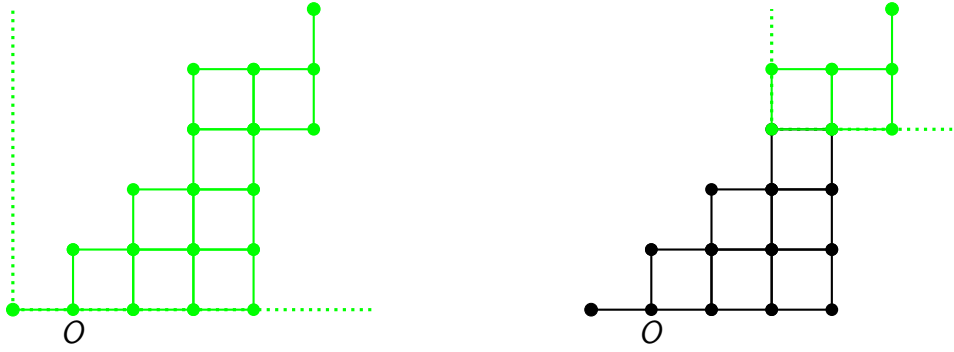
\begin{figure}[htbp]
    \centering
    \begin{tikzpicture}[scale=0.8]
    \def\xmin{-1}
    \def\xmax{4}
    \def\ymin{0}
    \def\ymax{5}
    
    \draw (0, 0) node[below] {$O$};   
    
    \draw[green, dotted, very thick] (-1,0) -- (5,0);
    \draw[green, dotted, very thick] (-1,0) -- (-1,5);
    
    \draw[green,thick] (0,0) rectangle (1,1);
    \foreach \x/\y in {0/0, 1/0, 0/1, 1/1}
        \fill[green] (\x,\y) circle (3pt);
    \draw[green,thick] (1,0) rectangle (2,1);
    \foreach \x/\y in {1/0, 2/0, 1/1, 2/1}
        \fill[green] (\x,\y) circle (3pt);
    \draw[green,thick] (2,0) rectangle (3,1);
    \foreach \x/\y in {2/0, 3/0, 0/1, 3/1}
        \fill[green] (\x,\y) circle (3pt);
    \draw[green,thick] (1,1) rectangle (2,2);
    \foreach \x/\y in {1/1, 2/1, 1/2, 2/2}
        \fill[green] (\x,\y) circle (3pt);
    \draw[green,thick] (2,1) rectangle (3,2);
    \foreach \x/\y in {2/1, 3/1, 2/2, 3/2}
        \fill[green] (\x,\y) circle (3pt);
    \draw[green,thick] (2,2) rectangle (3,3);
    \foreach \x/\y in {2/2, 3/2, 2/3, 3/3}
        \fill[green] (\x,\y) circle (3pt);
    \draw[green,thick] (2,3) rectangle (3,4);
    \foreach \x/\y in {2/3, 3/3, 2/4, 3/4}
        \fill[green] (\x,\y) circle (3pt);
    \draw[green,thick] (3,3) rectangle (4,4);
    \foreach \x/\y in {3/3, 4/3, 3/4, 4/4}
        \fill[green] (\x,\y) circle (3pt);
    
    \draw[green, thick] (-1, 0) -- (0 , 0);
    \draw[green, thick] (4, 4) -- (4 , 5);
    \filldraw[green] (-1,0) circle (3pt);
    \filldraw[green] (4, 5) circle (3pt);
\end{tikzpicture}
\hspace{2.5 cm}
    \begin{tikzpicture}[scale=0.8]
    \def\xmin{-1}
    \def\xmax{4}
    \def\ymin{0}
    \def\ymax{5}
    
    \draw (0, 0) node[below] {$O$};   
    
    \draw[green, dotted, very thick] (2,3) -- (2,5);
    \draw[green, dotted, very thick] (2,3) -- (5,3);
    
    \draw[black,thick] (0,0) rectangle (1,1);
    \foreach \x/\y in {0/0, 1/0, 0/1, 1/1}
        \fill[black] (\x,\y) circle (3pt);
    \draw[black,thick] (1,0) rectangle (2,1);
    \foreach \x/\y in {1/0, 2/0, 1/1, 2/1}
        \fill[black] (\x,\y) circle (3pt);
    \draw[black,thick] (2,0) rectangle (3,1);
    \foreach \x/\y in {2/0, 3/0, 0/1, 3/1}
        \fill[black] (\x,\y) circle (3pt);
    \draw[black,thick] (1,1) rectangle (2,2);
    \foreach \x/\y in {1/1, 2/1, 1/2, 2/2}
        \fill[black] (\x,\y) circle (3pt);
    \draw[black,thick] (2,1) rectangle (3,2);
    \foreach \x/\y in {2/1, 3/1, 2/2, 3/2}
        \fill[black] (\x,\y) circle (3pt);
    \draw[black,thick] (2,2) rectangle (3,3);
    \foreach \x/\y in {2/2, 3/2, 2/3, 3/3}
        \fill[black] (\x,\y) circle (3pt);
    \draw[green,thick] (2,3) rectangle (3,4);
    \foreach \x/\y in {2/3, 3/3, 2/4, 3/4}
        \fill[green] (\x,\y) circle (3pt);
    \draw[green,thick] (3,3) rectangle (4,4);
    \foreach \x/\y in {3/3, 4/3, 3/4, 4/4}
        \fill[green] (\x,\y) circle (3pt);
    
    \draw[black, thick] (-1, 0) -- (0 , 0);
    \draw[green, thick] (4, 4) -- (4 , 5);
    \filldraw (-1,0) circle (3pt);
    \filldraw[green] (4, 5) circle (3pt);
\end{tikzpicture}
    \caption{ Subposets $J_{-1,0}$ (left) and $J_{2,3}$ (right), colored in green}
    \label{fig: C_{i,j} example}
\end{figure}

\begin{proof}[Proof of Lemma 3.1 for the ladder case]
Consider $i \geq j$ and note that at height $j + 1$, we have \[\{(j , j +1), (j + 1, j + 1), \cdots, (n + 1, j + 1)\}.\]
Let $\mathcal{C}_k(J_{i,j})$ be the set of length $k$ chains in $J_{i,j}$ and take $|\mathcal{C}_k(J_{i,j})|$ to be the coefficient of $z^k$ in $\mathcal{C}_{i,j}(z)$.
Consider any chain $\mathcal{C}$ in $\mathcal{C}_k(J_{i,j})$ and let $x$ be the largest index such that $(x, j) \in \mathcal{C}$.
When there is not a $(x, j) \in \mathcal{C}$, let $x = 0$ and we shall see that $x$ determines the rest of $\mathcal{C}$.
Denote $\mathcal{C} \backslash (\mathcal{C} \cap \{(i , j)\;|\; i = j - 1, j , \cdots, n + 1\})$ by $\hat{\mathcal{C}}$.
We have the following:
    \begin{enumerate}
        \item[1.] If $x = 0$, namely $\mathcal{C}$ does not have an element at height $j$, then $\hat{\mathcal{C}} \in \mathcal{C}_k(J_{i,j + 1})$.
        \item[2.] If $x = i$, then $\hat{\mathcal{C}} \in \mathcal{C}_{k - 1}(J_{i,j + 1})$.
        \item[3.] If $x = m, i + 1\leq m \leq n + 1$, suppose the number of points in $\mathcal{C}$ of height $j$ is $q$, where $1 \leq q \leq \min\{m - i + 1, k\}$, i.e.,
        \[
        |\mathcal{C} \cap \{(i , j)| i = j - 1, j , \cdots, n + 1\}| = q.
        \]
        To determine $\mathcal{C} \cap \{(i , j)\;|\; i = j - 1, j , \dots, n + 1\}$, we have $\binom{m - i }{q - 1}$ ways to choose $q - 1$ elements from $\{(d , j)\;|\; d = i, i + 1 , \dots, m - 1\}$, besides $(m, j + 1)$. 
        Note that regardless of the choice of these $q - 1$ points, we have
         $\hat{\mathcal{C}} \in \mathcal{C}_{k - q}(J_{m,j + 1})$. 
         Thus, this $x = m$ case contributes in total \[\sum\limits_{q = 1}^{\min\{m - i + 1, k\}}\binom{m - i}{q - 1}|\mathcal{C}_{k - q}(J_{m,j + 1})| = \sum\limits_{q = 1}^{\min\{m - i + 1, k\}}\binom{m - i}{q - 1}[z^{k - q}]V_{m, j + 1}(z)\] to our enumeration of $|\mathcal{C}_k(J_{i,j})|$.
    \end{enumerate}
    Therefore, under the assumption that $i \geq j$, we have: 
    \[
    [z^k]\mathcal{C}_{i,j}(z) =  [z^k]\mathcal{C}_{i,j+1}(z) + [z^{k - 1}]\mathcal{C}_{i,j+ 1}(z) + \sum\limits_{m = i + 1}^{n + 1}\sum\limits_{q = 1}^{\min\{m - i + 1, k\}}\binom{m - i}{q - 1}[z^{k - q}]\mathcal{C}_{m, j + 1}(z).
    \]
    Because $[z^k]\mathcal{C}_{i,j+1}(z) + [z^{k - 1}]\mathcal{C}_{i,j+ 1}(z) = [z^k](1 + z)\mathcal{C}_{i,j+1}(z)$, we have the following identity:
    \begin{equation} \label{eq: eq2 in lemma 3.1}
       [z^k]\mathcal{C}_{i,j}(z) = [z^k](1 + z)\mathcal{C}_{i,j+1}(z) + \sum\limits_{w = 2}^{n + 2 - i}\sum\limits_{q = 1}^{\min\{w, k\}}\binom{w - 1}{q - 1}[z^{k - q}]\mathcal{C}_{w + i - 1, j + 1}(z).
    \end{equation}

    Notice that the desired identity (\ref{eq: chain recur formula}) for the ladder case is:
    \[
    \mathcal{C}_{i, j}(z) = (1 + z)\mathcal{C}_{i, j + 1}(z) + \sum_{k = 2}^{n + 1} z(1 + z)^{k - 1} \mathcal{C}_{i+k-1, j + 1}(z).
    \]
   It suffices to prove:
    \begin{equation} \label{eq: eq3 in lemma 3.1}
        [z^k] \mathcal{C}_{i, j}(z) = [z^k](1 + z)\mathcal{C}_{i,j+1}(z) + \sum_{w = 2}^{n + 2 - i} [z^k] z(1 + z)^{w - 1} \mathcal{C}_{w + i -1, j + 1}(z).
    \end{equation}
    Now, substituting $[z^k] z(1 + z)^{w - 1} \mathcal{C}_{w + i -1, j + 1}(z) = \sum\limits_{q = 1}^{\min\{w, k\}}\binom{w - 1}{q - 1}[z^{k - q}]\mathcal{C}_{w + i - 1, j + 1}(z)$ into (\ref{eq: eq2 in lemma 3.1}), we get exactly the desired equality (\ref{eq: eq3 in lemma 3.1}), thus concludes the proof when $i \geq j$.
    
    When $i = j - 1$, the desired identity (\ref{eq: chain recur formula2}) is:
    \[
    \mathcal{C}_{j-1, j}(z) = (1 + z).
    \]
    Similarly, for any $\mathcal{C} \in \mathcal{C}_k(J_{j - 1, j})$, we have the following two cases:
    \begin{itemize}
        \item[1.] If $(j-1, j) \in \mathcal{C}$, then $\mathcal{C} \setminus \{(j-1, j)\} \in \mathcal{C}_{k - 1}(J_{j, j})$.
        \item[2.] If $(j-1, j) \notin \mathcal{C}$, then $\mathcal{C} \setminus \{(j-1, j)\} \in \mathcal{C}_{k}(J_{j, j})$.
    \end{itemize}
    Thus, $[z^k] \mathcal{C}_{j-1, j}(z) = [z^k] \mathcal{C}_{j, j}(z) + [z^{k - 1}] \mathcal{C}_{j, j}(z) = [z^k] (1 + z) \mathcal{C}_{j, j}(z)$, giving us identity (\ref{eq: chain recur formula2}) for the $i = j - 1$ case.
\end{proof}

This allows us to expand $\mathcal{C}_{-1,0}(z) = \mathcal{C}(z)$ by height as:
\begin{align*}
    \mathcal{C}_{-1, 0}(z) &=  A_{0,0}(z)\mathcal{C}_{0, 0}(z)\\
    &= A_{i_{1,1},1}(z)\mathcal{C}_{i_{1,1},1 }(z) + A_{i_{2,1},1}(z)\mathcal{C}_{i_{2,1},1 }(z) + \cdots + A_{i_{m_1, 1},1}(z)\mathcal{C}_{i_{m_1, 1},1 }(z)\\
    &= A_{n + 1, n +2}(z)\mathcal{C}_{n + 1, n+ 2}(z).
\end{align*}
In fact, to obtain the chain polynomial, it suffices to determine the polynomial $A_{n + 1, n + 2}(z)$.
One can either view the polynomials $A_{i,j}$ as placeholders or alternatively, we can compute them via Lemma \ref{lemma:chain_polynomial}.

Note that the third equality holds because we can apply Lemma \ref{lemma:chain_polynomial} to $\mathcal{C}_{i_{k,1},1 }(z)$, for $1\leq k \leq m_1$, to obtain the expansion at height $2$:
\[
 \mathcal{C}_{-1, 0}(z) = A_{i_{1,2},2}(z)\mathcal{C}_{i_{1,2},2 }(z) + A_{i_{2, 2},2}(z)\mathcal{C}_{i_{2, 2},2}(z) + \cdots + A_{i_{m_2, 2},2}(z)\mathcal{C}_{i_{m_2, 2},2 }(z).
\]
Then continue to apply Lemma \ref{lemma:chain_polynomial} to get the expansion at height $n + 2$:
\[
\mathcal{C}(z) = \mathcal{C}_{-1, 0}(z) = A_{n + 1, n +2}(z)\mathcal{C}_{n + 1, n+ 2}(z).
\]

By repeatedly applying Lemma \ref{lemma:chain_polynomial} we could obtain $A_{n + 1, n +2}(z)$, but we introduce a combinatorial interpretation of $A_{n + 1, n +2}(z)$, next.
In order to state the combinatorial formula of $A_{n + 1, n +2}(z)$ concisely, we define our combinatorial objects, namely: \emph{valid paths}, the \emph{weight of a path}, and the \emph{total weights} from one point to another point.

\begin{definition}\hfill
    \begin{enumerate}
        \item[1.] Let $e_{i,j}$ denote a step from  $(i,j)$ to $(i + 1, j)$, where $(i, j + 1) \notin J(P)$ and let 
        $h_{i, j}^k$ denote a step from $(i, j)$ to $(i + k, j + 1)$ when $(i + k, j + 1) \in J(P)$.
        A \textit{valid path} $p$ is a sequence of steps on $J(P)$ consisting of $e_{i,j}$ and $h_{i, j}^k$.
        
        \item[2.] Take $e_{i,j}$ to have weight $1 + z$, $h_{i,j}^0$ to have weight $1 + z$, and $h_{i, j}^k, k > 0$ to have weight $z(1 + z)^k$.
        The \emph{weight of a valid path} is the product of the weights of all its steps.
        \item[3.] Take $W\{(i,j), (p,q)\}$ to be the sum of weights of all possible valid paths from $(i, j)$ to $(p,q)$.
    \end{enumerate}
\end{definition}

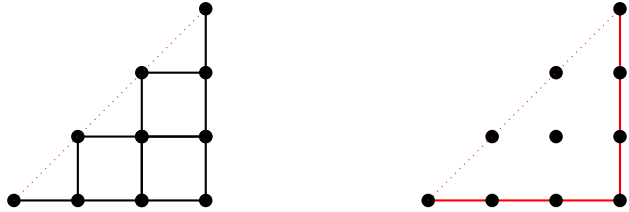
\begin{figure*}[hbt!]
    \centering
    \begin{tikzpicture}[scale = 0.85]
    \def\xmin{-4}
    \def\xmax{4}
    \def\ymin{0}
    \def\ymax{5}
    \draw[black,thick] (0,0) rectangle (1,1);
    \foreach \x/\y in {0/0, 1/0, 0/1, 1/1}
        \fill (\x,\y) circle (3pt);
    \draw[black,thick] (1,0) rectangle (2,1);
    \foreach \x/\y in {1/0, 2/0, 1/1, 2/1}
        \fill (\x,\y) circle (3pt);
    \draw[black,thick] (1,1) rectangle (2,2);
    \foreach \x/\y in {1/1, 2/1, 1/2, 2/2}
        \fill (\x,\y) circle (3pt);

    \draw[black, thick] (-1, 0) -- (0 ,0 );
    \draw[black, thick] (2, 2) -- (2, 3);
    \fill (-1, 0) circle (3pt);
    \fill (2, 3) circle (3pt);
    \draw[dotted, red, thin] (-1, 0) -- (2,3);
\end{tikzpicture}
\hspace{2.5cm}
\begin{tikzpicture}[scale = 0.85]
        \def\xmin{-4}
        \def\xmax{4}
        \def\ymin{0}
        \def\ymax{5}
        
        \draw[red, thick] (-1, 0) -- (0 ,0 ) -- (1 , 0) -- (2,0) --
        (2,1) -- (2,2) -- (2, 3);

        \foreach \x/\y in {-1/0, 0/0, 1/0, 2/0, 2/1, 0/1, 1/1, 1/2, 2/2, 2/3}
        \fill[black] (\x,\y) circle (3pt);
        \draw[dotted, red, thin] (-1, 0) -- (2,3);
\end{tikzpicture}
    \caption{$J(P(\epsilon R))$ (left) and a path that is not valid (right, in red).}
    \label{fig: invalid path example}
\end{figure*}

With these definitions and structure in hand, we now present a combinatorial formula for the chain polynomial of a generalized snake.

\begin{theorem}\label{thm:chain_coefficients}
The polynomial $A_{n+1, n+2}(z)$ is given by 
\[A_{n+1, n+2}(z) = W\{(-1,0), (n+1, n+2)\}.\]
Thus, the chain polynomial of any generalized snake is given by \[\mathcal{C}(z) = W\{(-1,0), (n+1, n+2)\} \cdot (1 + z).\] 
\end{theorem}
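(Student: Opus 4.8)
The plan is to prove the first identity $A_{n+1,n+2}(z)=W\{(-1,0),(n+1,n+2)\}$ and then read off the chain polynomial as a corollary. The guiding principle is that iterating Lemma~\ref{lemma:chain_polynomial} to push the expansion of $\mathcal{C}_{-1,0}(z)$ upward one height at a time is the \emph{same} process as growing a valid path one step at a time, with the placeholder polynomials $A_{i,j}(z)$ playing the role of accumulated path weights. Accordingly, I would show that $W$ and the $A_{i,j}(z)$ obey a common recursion with common initial data, and then invoke induction.

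First I would pin down the recursion satisfied by the $A_{i,j}(z)$. Writing the height-$j$ stage of the expansion as $\mathcal{C}_{-1,0}(z)=\sum_{(i,j)}A_{i,j}(z)\,\mathcal{C}_{i,j}(z)$, where the sum runs over the elements of $J(P)$ at height $j$, and substituting Lemma~\ref{lemma:chain_polynomial} into each $\mathcal{C}_{i,j}(z)$, I collect the resulting coefficient of each $\mathcal{C}_{i',j+1}(z)$ to obtain a transfer recursion for $A_{i',j+1}(z)$ in terms of coefficients from heights $\le j+1$. The two cases of Lemma~\ref{lemma:chain_polynomial} enter in two different ways: an element with $i\ge j$ feeds height $j+1$ through the up-moves of~(\ref{eq: chain recur formula}), whereas an element with $i<j$ is first consolidated horizontally onto the diagonal through~(\ref{eq: chain recur formula2}).

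The crux is to recognize these two transfer contributions as exactly the step weights defining valid paths. Using the ladder specialization $\mathcal{C}_{i,j}(z)=(1+z)\mathcal{C}_{i,j+1}(z)+\sum_{k\ge 2}z(1+z)^{k-1}\mathcal{C}_{i+k-1,j+1}(z)$ established in the proof of Lemma~\ref{lemma:chain_polynomial}, an up-move $(i,j)\to(i+k,j+1)$ carries weight $1+z$ when $k=0$ and $z(1+z)^{k}$ when $k>0$, which is precisely the weight of the step $h_{i,j}^{k}$; and the horizontal consolidation of~(\ref{eq: chain recur formula2}) contributes a factor $1+z$ per unit step, which is precisely the weight of an $e$-step. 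With this identification I would show that $W$ satisfies the very same recursion by decomposing each valid path according to its final step: a path reaching $(i',j+1)$ either arrives by an $h$-step from some $(i,j)$ at height $j$, or by an $e$-step from $(i'-1,j+1)$, giving
\[W\{(-1,0),(i',j+1)\}=\sum_{i}W\{(-1,0),(i,j)\}\,\operatorname{wt}\!\left(h_{i,j}^{\,i'-i}\right)+W\{(-1,0),(i'-1,j+1)\}\,\operatorname{wt}\!\left(e_{i'-1,j+1}\right),\]
with the $e$-term present exactly when that step is legal, i.e.\ when $(i'-1,j+2)\notin J(P)$. Since the initial data agree, $A_{-1,0}(z)=1=W\{(-1,0),(-1,0)\}$ (the trivial expansion versus the empty path), induction on height yields $A_{i,j}(z)=W\{(-1,0),(i,j)\}$ for every $(i,j)\in J(P)$, and in particular at $(n+1,n+2)$. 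For the second statement, $J_{n+1,n+2}$ consists of the single top element of $J(P)$, so $\mathcal{C}_{n+1,n+2}(z)=1+z$; substituting into $\mathcal{C}(z)=\mathcal{C}_{-1,0}(z)=A_{n+1,n+2}(z)\,\mathcal{C}_{n+1,n+2}(z)$ gives $\mathcal{C}(z)=W\{(-1,0),(n+1,n+2)\}\,(1+z)$, as claimed.

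I expect the main obstacle to be the bookkeeping that certifies this correspondence is an exact accounting, with no contributions lost or double-counted. Concretely, I must check that the geometric gate $(i,j+1)\notin J(P)$ governing $e$-steps coincides with the below-diagonal regime $i<j$ in which~(\ref{eq: chain recur formula2}) applies, so that every horizontal consolidation is realized by a legal $e$-step and every up-move by a legal $h$-step; and that summing the last-step decomposition over all valid paths reproduces the iterated expansion term by term rather than merely up to some global factor. Once this matching is secured, the induction itself is routine.
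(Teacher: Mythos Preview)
Your proposal is correct and follows essentially the same approach as the paper. The paper's own proof (given for the ladder case, with the remark that the general case is analogous) is a terse version of exactly what you describe: one computes $A_{i,1}=W\{(-1,0),(i,1)\}$ by hand from the first application of Lemma~\ref{lemma:chain_polynomial}, and then observes that continuing to expand height by height reproduces the identity $A_{i,j}=W\{(-1,0),(i,j)\}$ for all $(i,j)$. Your write-up is more explicit about why the inductive step goes through---matching the coefficients of Lemma~\ref{lemma:chain_polynomial} to the weights of $h$-steps, the horizontal consolidation of~(\ref{eq: chain recur formula2}) to the weights of $e$-steps, and formulating the last-step decomposition for $W$---but the underlying argument is identical.
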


\begin{proof}[Proof of Theorem 3.3 for the ladder case]
We consider the ladder case since it makes the notation much clearer, but the proof technique holds for any generalized snake.
In height $i$, we have $A_{i, i}, A_{i + 1, i}, \cdots, A_{n + 2, i}$, by applying Lemma \ref{lemma:chain_polynomial} we have: \[\mathcal{C}_{-1,0}(z) = (1 + z) \mathcal{C}_{0, 0} (z), \text{ and }\]
\[\mathcal{C}_{0,0} (z) = (1 + z) V_{0,1} (z) + z(1 + z) V_{1,1} (z) + \cdots + z(1 + z)^{n + 1}\mathcal{C}_{n + 1, 1} (z).\]
Combining these two identities, we get:
    \[\mathcal{C}_{-1, 0} (z) = W\{(-1,0), (0, 1)\}\mathcal{C}_{0,1} (z) + \cdots + W\{(-1,0), (n+1, 1)\}\mathcal{C}_{n + 1, 1} (z).\]
Thus, $A_{i,1} = W\{(-1,0), (i, 1)\}$, keep using the lemma to expand $\mathcal{C}_{i,1}$ to the next height and we obtain that $A_{i, j} = W\{(-1,0), (i, j)\}$.  
\end{proof}

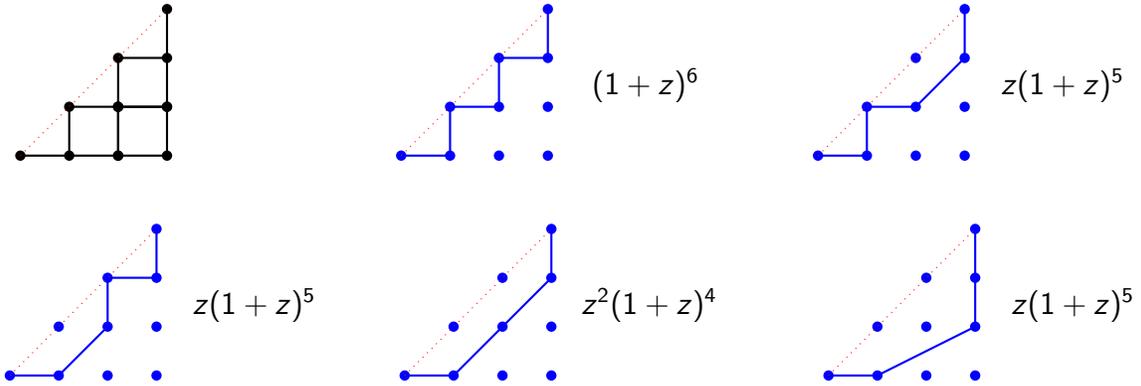
\begin{figure*}[hbt!]
    \centering
    \begin{tikzpicture}[scale = 0.65]
    \def\xmin{-4}
    \def\xmax{4}
    \def\ymin{0}
    \def\ymax{5}
    \draw[black,thick] (0,0) rectangle (1,1);
    \foreach \x/\y in {0/0, 1/0, 0/1, 1/1}
        \fill (\x,\y) circle (3pt);
    \draw[black,thick] (1,0) rectangle (2,1);
    \foreach \x/\y in {1/0, 2/0, 1/1, 2/1}
        \fill (\x,\y) circle (3pt);
    \draw[black,thick] (1,1) rectangle (2,2);
    \foreach \x/\y in {1/1, 2/1, 1/2, 2/2}
        \fill (\x,\y) circle (3pt);

    \draw[black, thick] (-1, 0) -- (0 ,0 );
    \draw[black, thick] (2, 2) -- (2, 3);
    \fill (-1, 0) circle (3pt);
    \fill (2, 3) circle (3pt);
    \draw[dotted, red, thin] (-1, 0) -- (2,3);
    \end{tikzpicture}
    \hspace{2.7cm}
    \begin{tikzpicture}[scale = 0.65]
        \def\xmin{-4}
        \def\xmax{4}
        \def\ymin{0}
        \def\ymax{5}
        
        \draw[blue, thick] (-1, 0) -- (0 ,0 ) -- (0 , 1) -- (1,1) --
        (1,2) -- (2,2) -- (2, 3);
        \node[anchor=north] at (4,2) {$(1 + z)^6$};
        \foreach \x/\y in {-1/0, 0/0, 1/0, 2/0, 2/1, 0/1, 1/1, 1/2, 2/2, 2/3}
        \fill[blue] (\x,\y) circle (3pt);
        \draw[dotted, red, thin] (-1, 0) -- (2,3);
    \end{tikzpicture}
    \hspace{1.1cm}
    \begin{tikzpicture}[scale = 0.65]
        \def\xmin{-4}
        \def\xmax{4}
        \def\ymin{0}
        \def\ymax{5}
        \draw[blue, thick] (-1, 0) -- (0 ,0 ) -- (0 , 1) -- (1,1) --
        (2, 2) -- (2, 3);
        \node[anchor=north] at (4,2) {$z (1 + z)^5$};
        \foreach \x/\y in {-1/0, 0/0, 1/0, 2/0, 2/1, 0/1, 1/1, 1/2, 2/2, 2/3}
        \fill[blue] (\x,\y) circle (3pt);
        \draw[dotted, red, thin] (-1, 0) -- (2,3);
    \end{tikzpicture}
    \vspace{0.8cm}

    \begin{tikzpicture}[scale = 0.65]
        \def\xmin{-4}
        \def\xmax{4}
        \def\ymin{0}
        \def\ymax{5}
        \draw[blue, thick] (-1, 0) -- (0 ,0 ) -- (1,1) --
        (1,2) -- (2,2) -- (2, 3);
        \node[anchor=north] at (4,2) {$z (1 + z)^5$};
        \foreach \x/\y in {-1/0, 0/0, 1/0, 2/0, 2/1, 0/1, 1/1, 1/2, 2/2, 2/3}
        \fill[blue] (\x,\y) circle (3pt);
        \draw[dotted, red, thin] (-1, 0) -- (2,3);
    \end{tikzpicture}
    \hspace{0.7cm}
    \begin{tikzpicture}[scale = 0.65]
        \def\xmin{-4}
        \def\xmax{4}
        \def\ymin{0}
        \def\ymax{5}
        \draw[blue, thick] (-1, 0) -- (0 ,0 ) -- (2, 2) -- (2, 3);
        \node[anchor=north] at (4,2) {$z^2 (1 + z)^4$};
        \foreach \x/\y in {-1/0, 0/0, 1/0, 2/0, 2/1, 0/1, 1/1, 1/2, 2/2, 2/3}
        \fill[blue] (\x,\y) circle (3pt);
        \draw[dotted, red, thin] (-1, 0) -- (2,3);
    \end{tikzpicture}
    \hspace{1cm}
    \begin{tikzpicture}[scale = 0.65]
        \def\xmin{-4}
        \def\xmax{4}
        \def\ymin{0}
        \def\ymax{5}
        \draw[blue, thick] (-1, 0) -- (0 ,0 ) -- (2,1) -- (2,2) -- (2, 3);
        \node[anchor=north] at (4,2) {$z(1 + z)^5$};
        \foreach \x/\y in {-1/0, 0/0, 1/0, 2/0, 2/1, 0/1, 1/1, 1/2, 2/2, 2/3}
        \fill[blue] (\x,\y) circle (3pt);
        \draw[dotted, red, thin] (-1, 0) -- (2,3);
    \end{tikzpicture}
    \caption{All maximal valid paths in $J(P(\epsilon R))$}
    \label{fig: weighted path example}
\end{figure*}

\begin{example} \label{example: using section 3 formula}
 Figure \ref{fig: weighted path example} depicts all the maximal valid paths of $J(P(\epsilon R))$, which can be used to compute the associated chain polynomial:
    \begin{align*}
        \calC(J(P(\epsilon R)), z) &= (1 + z)^6 + 3 z (1 + z)^5 + z^2 (1 + z)^4 \\
        &= 5 z^6 + 25 z^5 + 51 z^4 + 54 z^3 + 31 z^2 + 9 z + 1
    \end{align*}
    Then we can obtain $h^*(\mathcal{O}(P(\epsilon R));z)$, via the relation between $h^*$-polynomial of the polytope and the $h$-polynomial of its unimodular triangulation:
    \begin{align*}
        h^*(\mathcal{O}(P(\epsilon R)); z) &=  (1 -z )^6 + 9 z (1 - z)^5 + 31 z^2 (1 - z)^4 \\
        &+ 54 z^3 (1 - z)^3 + 51 z^4 (1 - z)^2 + 25 z^5 (1 - z) + 5 z^6\\
        &= z^2 + 3 z + 1.
    \end{align*}
\end{example}
\label{sec:chain_polynomial}


\section{ \texorpdfstring{$h^*$-}-polynomials via lattice-path enumeration}
\label{sec:lattice-path}
In what follows, we obtain the closed formulas for the $h^*$-polynomial of the ladder and the regular snake, a recurrence formula for the $h^*$-polynomial of any generalized snake, and the coefficient-wise $h^*$-inequality with the ladder and the regular snake having the greatest and smallest coefficient respectively, all via lattice-path enumeration.

\subsection{The ladder and the regular snake}
Here, we present two closed formulas for the $h^*$-polynomials of the order polytopes of ladders and regular snakes. We will see that they exhibit nice coefficients: the Narayana numbers and the Delannoy numbers, respectively.

\bigskip 

\begin{definition}\hfill
    \begin{enumerate}
        \item[1.] The Narayana numbers \( N(n, k) \) are defined by the following formula:
\[
N(n, k) = \frac{1}{n} \binom{n}{k} \binom{n}{k-1}.
\]
The Narayana numbers \( N(n, k) \) are known to count the number of Dyck paths of length \( 2n \) with exactly \( k \) peaks (OEIS \cite{OEIS}, A001263).
    \item[2.] The Delannoy numbers \( D(m, n) \) are given by the following formula
    \[
    D(m,n ) = \sum_{k = 0}^{\min (m,n)} \binom{m}{k} \binom{n}{k} 2^k\] and are known to count the number of paths from \((0, 0)\) to \((m, n)\) in the $\mathbb{Z}^2$ grid with additional diagonally up-right edges, where moves are allowed to the right, up, or diagonally up-right (OEIS \cite{OEIS}, A008288).
    \end{enumerate}
\end{definition}

Next, we introduce a lemma that is fundamental to our method of computing the $h^*$-polynomials.
The $\Omega$-Eulerian polynomial $E(\Omega, t)$ of a graded poset $\Omega$ is defined in \cite{MR2146855} as follows:
for $\pi:=\pi_1\cdots \pi_n \in \mathcal{S}_n$,
    \[ E(\Omega;z):= \sum\limits_{\pi \in \mathcal{L}(\Omega)} z^{\operatorname{des}(\pi)}.
    \]
where $\operatorname{des}(\pi) = |\{i \in [n - 1] \;|\; \pi_{i + 1} < \pi_i\}|$ and `$<$' is the natural order on integers.

\begin{lemma} [Theorem 3.7, \cite{MR2146855}] \label{lemma: Eulerian polynomial}
    Let $P$ be a naturally labeled and graded poset. 
    Since $L(\mathcal{O}(P); t)$ is equal to the number of order reversing maps $\rho : P \to \{0, 1, \ldots, t\}$, we have
\[\sum_{t \geq 0} L(\mathcal{O}(P); t) z^t = \frac{E(P; z)}{(1 - z)^{\dim(\mathcal{O}(P))+1}}.\]
In particular, $h^*(\mathcal{O}(P); z) = E(P; z)$.
\end{lemma}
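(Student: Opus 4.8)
The plan is to prove this as the Ehrhart-series incarnation of the fundamental theorem of $P$-partitions, using the canonical triangulation $\mathcal{T}$ of $\mathcal{O}(P)$ recalled in Subsection~\ref{subsec:order_polytopes}. Writing $d=|P|=\dim\mathcal{O}(P)$, the hypothesis identifies $L(\mathcal{O}(P);t)$ with the number of monotone integer maps $\rho\colon P\to\{0,\ldots,t\}$, so it suffices to show
\[
\sum_{t\ge 0}L(\mathcal{O}(P);t)\,z^t=\frac{\sum_{\pi\in\mathcal{L}(P)}z^{\des(\pi)}}{(1-z)^{d+1}}=\frac{E(P;z)}{(1-z)^{d+1}},
\]
and then to read off $h^*(\mathcal{O}(P);z)=E(P;z)$ by comparing with $\Ehr(\mathcal{O}(P);z)=h^*(\mathcal{O}(P);z)/(1-z)^{\dim\mathcal{O}(P)+1}$.

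First I would partition the lattice points of the dilate according to the maximal simplices of $\mathcal{T}$. Recall these are $\sigma_\pi=\{0\le x_{a_1}\le\cdots\le x_{a_d}\le 1\}$, one for each linear extension $\pi=(a_1,\dots,a_d)\in\mathcal{L}(P)$, meeting only along walls $x_i=x_j$. To turn this closed cover into a disjoint one I would pass to the half-open version of each $\sigma_\pi$, deleting from $\sigma_\pi$ exactly those facets $\{x_{a_i}=x_{a_{i+1}}\}$ at which $a_i>a_{i+1}$ in the natural order, i.e.\ at the descent positions of $\pi$. The point of this choice---and the genuine content of the argument---is that because $P$ is naturally labeled, this deletion rule makes the half-open simplices tile $\mathcal{O}(P)$ without overlap, so that every monotone map $\rho$ lies in exactly one of them.

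Next I would count lattice points in the $t$-th dilate of a single half-open block. For fixed $\pi$ these are the integer sequences $0\le y_1\le\cdots\le y_d\le t$ in which the inequality at each of the $\des(\pi)$ descent positions is forced to be strict; a stars-and-bars substitution lowering the upper bound by one at each strict step shows there are $\binom{t+d-\des(\pi)}{d}$ of them. Summing the resulting series gives
\[
\sum_{t\ge 0}\binom{t+d-\des(\pi)}{d}z^t=\frac{z^{\des(\pi)}}{(1-z)^{d+1}},
\]
and summing over all $\pi\in\mathcal{L}(P)$ yields the displayed identity, whence $h^*(\mathcal{O}(P);z)=E(P;z)$.

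The step I expect to be the main obstacle is the disjointness claim in the second paragraph: verifying that deleting precisely the descent walls produces a genuine partition of the monotone maps, with the block of $\pi$ being exactly the maps whose strict-inequality pattern matches $\des(\pi)$. This is the heart of $P$-partition theory, and it is where the hypotheses that $P$ be naturally labeled and graded are used; once it is established, the per-block count and the geometric summation are routine.
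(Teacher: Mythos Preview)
Your argument is correct and is the standard $P$-partition proof of this identity. Note, however, that the paper does not give its own proof of this lemma: it is quoted as Theorem~3.7 of \cite{MR2146855}, and the only justification the paper offers is the Remark immediately following, which says the lemma ``comes from the relationship between $L(\mathcal{O}(P);t)$ and the order polynomial of $P$,'' and alternatively from combining Theorem~3.5 of \cite{MR2146855} with the compressedness of $\mathcal{O}(P(\bw))$. Your half-open decomposition of $\mathcal{O}(P)$ along descent walls, followed by the stars-and-bars count $\binom{t+d-\des(\pi)}{d}$ per block, is exactly the order-polynomial/$P$-partition mechanism that the remark is pointing to, so your approach is fully consistent with what the paper invokes.

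One small correction: you write that the disjointness step ``is where the hypotheses that $P$ be naturally labeled and graded are used.'' Only the natural labeling is needed there; gradedness plays no role in the half-open tiling or in the identity $h^*(\mathcal{O}(P);z)=E(P;z)$. The gradedness hypothesis is present in the paper for other reasons (Gorenstein-ness of $\mathcal{O}(P(\bw))$), not for this lemma.
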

\begin{remark}
    Lemma \ref{lemma: Eulerian polynomial} comes from the relationship between $L(\mathcal{O}(P); t)$ and the order polynomial of $P$. 
    Even though $P(\bw)$ is not naturally labeled in its original definition, we can take its dual poset which has the same Ehrhart polynomial and $h^*$-polynomial as $P(\bw)$, then the lemma applies. 
    Another way to obtain this lemma is by combining Theorem 3.5 in \cite{MR2146855} and the fact that $\mathcal{O}(P(\bw))$ is a compressed integral polytope (Theorem 1.1 in \cite{MR1838375}).
\end{remark}

Note that Lemma \ref{lemma: Eulerian polynomial} applies to \textit{naturally labeled posets}.
Since $P(\bw)$ is reversely naturally labeled, we replace $\operatorname{des}(\pi)$ with $\operatorname{asc}(\pi) := |\{i \in [n - 1] \; |\; \pi_{i + 1} > \pi_i\}|$.
Thus, it suffices to enumerate $\mathcal{D}_k := \{\pi \in \mathcal{L}(P)\, \mid \, \operatorname{asc}(\pi) = k\}$. 
Let $\mathcal{M}$ be the set of all maximal chains in $J(P(\bw))$. 
Since linear extensions of $P(\bw)$ correspond to $\mathcal{M}$ in $J(P(\bw))$, we color edges of the Hasse diagram $J(P(\bw))$ following the next construction and define a subset $\mathcal{M}_k$ of $\mathcal{M}$ based on that coloring.

\bigskip

\begin{construction}[\textit{Red L Turns}]\label{construction:red_L_turns}

We define the \textit{red L turn} and specify the set $\mathcal{M}_k$ of maximal chains at the end.

Let $\mathcal{C}$ be any maximal chain from $(-1, 0)$ to $(2n + 1, 2n + 2)$ in $J(P)$.
Let the linear extension corresponding to $\mathcal{C}$ be denoted by $e$, where each move of $\mathcal{C}$ adds a new number to $e$.
\bigskip
\paragraph{Step 1: Edge Coloring}
Starting from $(-1, 0)$, we color the edges of $\mathcal{C}$:
\begin{itemize}
    \item Note that $\mathcal{C}$ moves from $(i, j)$ to $(i+1, j+1)$ through either:
    \begin{enumerate}
        \item $(i, j) \to (i+1, j) \to (i+1, j+1)$, or
        \item $(i, j) \to (i, j+1) \to (i+1, j+1)$.
    \end{enumerate}
    \item Each move adds two numbers, $a$ and $b$, to $e$. 
    Depending on the order ($ab$ or $ba$, where $a \neq b$), one of these paths increases the number of ascents in $\mathcal{C}$.
    \item \textbf{Red L Turn}: The path that increases the number of ascents is called a \textit{red L turn}; otherwise it is considered a normal turn.
\end{itemize}

For each turn in $\mathcal{C}$, color the turn red if it increases the number of ascents. All other segments remain uncolored.
Refer to Figure \ref{fig: construction example} for an example illustrating red L turns.

\bigskip
\begin{figure}[htbp] 
    \centering
    \begin{tikzpicture}[scale=0.85]
    \def\xmin{-1}
    \def\xmax{3}
    \def\ymin{0}
    \def\ymax{4}

    \draw[black,thick] (0,0) rectangle (1,1);
    \foreach \x/\y in {0/0, 1/0, 0/1, 1/1}
        \fill (\x,\y) circle (3pt);
    \draw[black,thick] (1,0) rectangle (2,1);
    \foreach \x/\y in {1/0, 2/0, 1/1, 2/1}
        \fill (\x,\y) circle (3pt);
    \draw[black,thick] (2,2) rectangle (3,3);
    \foreach \x/\y in {2/2, 3/2, 2/3, 3/3}
        \fill (\x,\y) circle (3pt);
    \draw[black,thick] (1,1) rectangle (2,2);
    \foreach \x/\y in {1/1, 2/1, 1/2, 2/2}
        \fill (\x,\y) circle (3pt);
    \draw[black,thick] (1,2) rectangle (2,3);
    \foreach \x/\y in {1/2, 2/2, 1/3, 2/3}
        \fill (\x,\y) circle (3pt);

    \draw[black, thick] (-1, 0) -- (0 , 0);
    \draw[black, thick] (3, 3) -- (3, 4);
    \filldraw (-1,0) circle (3pt);
    \filldraw (3, 4) circle (3pt);

    \draw[red, thick] (0.1, 0.1) -- (0.9, 0.1);
    \draw[red, thick] (0.9, 0.1) -- (0.9, 0.9);
    
    \draw[red, thick] (1.1, 0.1) -- (1.9, 0.1);
     \draw[red, thick] (1.9, 0.1) -- (1.9, 0.9);
     
     \draw[red, thick] (1.1, 1.1) -- (1.1, 1.9);
     \draw[red, thick] (1.1, 1.9) -- (1.9, 1.9);
     
     \draw[red, thick] (1.1, 2.1) -- (1.1, 2.9);
     \draw[red, thick] (1.1, 2.9) -- (1.9, 2.9);
     
     \draw[red, thick] (2.1, 2.1) -- (2.9, 2.1);
     \draw[red, thick] (2.9, 2.1) -- (2.9, 2.9);

     \draw (- 1, 0) node[left] {$(- 1, 0)$};
     \draw (0, 1) node[left] {$(0, 1)$};

\end{tikzpicture}
\hspace{2 cm}
\begin{tikzpicture}[scale=0.85]
    \def\xmin{-1}
    \def\xmax{3}
    \def\ymin{0}
    \def\ymax{4}

    \draw[black,thick] (0,0) rectangle (1,1);
    \foreach \x/\y in {0/0, 1/0, 0/1, 1/1}
        \fill (\x,\y) circle (3pt);
        
    \draw[black,thick] (1,0) rectangle (2,1);
    \foreach \x/\y in {1/0, 2/0, 1/1, 2/1}
        \fill (\x,\y) circle (3pt);

    \draw[black,thick] (1,1) rectangle (2,2);
    \foreach \x/\y in {1/1, 2/1, 1/2, 2/2}
        \fill (\x,\y) circle (3pt);
        
    \draw[black,thick] (2,0) rectangle (3,1);
    \foreach \x/\y in {2/0, 3/0, 2/1, 3/1}
        \fill (\x,\y) circle (3pt);
        
    \draw[black,thick] (2,1) rectangle (3,2);
    \foreach \x/\y in {2/1, 3/1, 2/2, 3/2}
        \fill (\x,\y) circle (3pt);
        
    \draw[black,thick] (2,2) rectangle (3,3);
    \foreach \x/\y in {2/2, 3/2, 2/3, 3/3}
        \fill (\x,\y) circle (3pt);

    \draw[black, thick] (-1, 0) -- (0 , 0);
    \draw[black, thick] (3, 3) -- (3, 4);
    \filldraw (-1,0) circle (3pt);
    \filldraw (3, 4) circle (3pt);

    \draw[red, thick] (0.1, 0.1) -- (0.9, 0.1);
    \draw[red, thick] (0.9, 0.1) -- (0.9, 0.9);
    
    \draw[red, thick] (1.1, 0.1) -- (1.9, 0.1);
     \draw[red, thick] (1.9, 0.1) -- (1.9, 0.9);

    \draw[red, thick] (1.1, 1.1) -- (1.9, 1.1);
     \draw[red, thick] (1.9, 1.1) -- (1.9, 1.9);

    \draw[red, thick] (2.1, 0.1) -- (2.9, 0.1);
     \draw[red, thick] (2.9, 0.1) -- (2.9, 0.9);

    \draw[red, thick] (2.1, 1.1) -- (2.9, 1.1);
     \draw[red, thick] (2.9, 1.1) -- (2.9, 1.9);

    \draw[red, thick] (2.1, 2.1) -- (2.9, 2.1);
     \draw[red, thick] (2.9, 2.1) -- (2.9, 2.9);

     \draw (- 1, 0) node[left] {$(- 1, 0)$};
     \draw (0, 1) node[left] {$(0, 1)$};
     
\end{tikzpicture}
    \caption{Red L turns in $J(P(\epsilon LR))$ (left) and $J(P(\epsilon RR))$ (right)}
    \label{fig: construction example}
\end{figure}
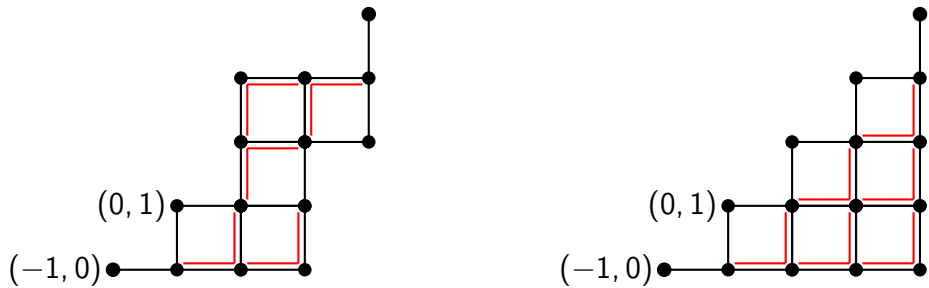


\paragraph{Step 2: Complete Red L Turns}
A maximal chain $\mathcal{C}$ is said to \textbf{contain} a red L turn if and only if both steps of the red L turn is in $\mathcal{C}$.
\begin{itemize}
    \item the red L turn is $(i, j) \to (i+1, j) \to (i+1, j+1)$ or $(i, j) \to (i, j + 1) \to (i+1, j+1)$, then $\mathcal{C}$ must include both steps.
    \item  If $\mathcal{C}$ contains only one step of the turn (e.g., $(i, j) \to (i+1, j)$ or $(i+1, j) \to (i+1, j+1)$, but not both), then it does not contain the red L turn.
\end{itemize}

\paragraph{Step 3: Full Coloring of $J(P(\bw))$}
To color the entire poset $J(P(\bw))$, repeat this process for all possible maximal chains. Note that every square face in $J(P)$ contains exactly one red L turn. 
\bigskip

\paragraph{Step 4: Defining $\mathcal{M}_k$}
Finally, we define $\mathcal{M}_k$ as:
\[
\mathcal{M}_k := \text{the set of maximal chains containing exactly $k$ red L turns}.
\]
 \hfill $\diamondsuit$
\end{construction}

\bigskip
We go on to prove that there is a bijection between $\mathcal{M}_k$, the set of maximal paths with $k$ red L turns, and $\mathcal{D}_k$, hence they have the same cardinality.

\begin{definition} \label{segment}
    Let $\bw[i : j]$ be any substring of $\bw = w_0w_1 \dots w_n$.
    We take $P(\bw)_{[i:j]}$ to be the subposet of $P(\bw)$ which:
    \begin{itemize}
        \item consists of the elements forming the square faces that are in $P(w_0w_1 \dots w_j)$ but not in $P(w_0w_1 \dots w_{i-1})$, and
        \item maintains the same partial order.
    \end{itemize} 
    We analogously define the subposet of $J(P(\bw))$ and denote it by $J(P(\bw))_{[i:j]}$.
\end{definition}
See Figure \ref{fig:subposet} for an example of the definition.

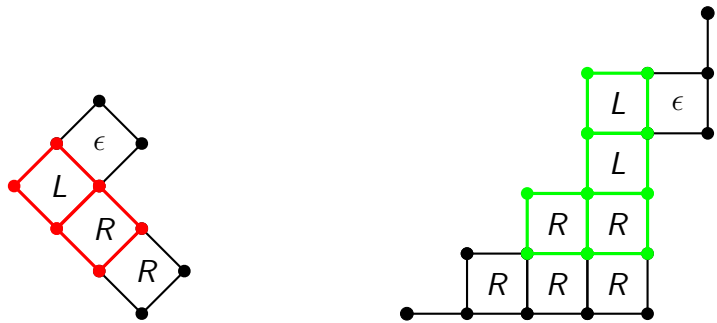
\begin{figure*}[hbt!]
    \centering
    \begin{tikzpicture}[scale=0.8, rotate=-45]
        \draw[black, thick] (-1,1) rectangle (-2,2);
        \foreach \x/\y in {-1/1, -2/1, -1/2, -2/2} {
            \fill (\x,\y) circle (3pt);
        }
        \draw (-1.7, 1.3) node[right] {$\epsilon$};
        
        \draw[black, thick] (0,0) rectangle (1,1);
        \foreach \x/\y in {0/0, 1/0, 0/1, 1/1} {
            \fill (\x,\y) circle (3pt);
        }
        \draw (0.3, 0.3) node[right] {$R$};

        \draw[red, very thick] (0,0) rectangle (-1,1);
        \foreach \x/\y in {0/0, -1/0, 0/1, -1/1} {
            \fill[red] (\x,\y) circle (3pt);
        }
        \draw (-0.7, 0.3) node[right] {$R$};
   
        \draw[red, very thick] (-2,1) rectangle (-1,0);
        \foreach \x/\y in {-2/1, -1/1, -2/0, -1/0} {
            \fill[red] (\x,\y) circle (3pt);
        }
        \draw (-1.7, 0.3) node[right] {$L$};
        
\end{tikzpicture}
\hspace{2.5 cm}
\begin{tikzpicture}[scale=0.8]
    \def\xmin{-1}
    \def\xmax{4}
    \def\ymin{0}
    \def\ymax{5}   
    
    \draw[black,thick] (0,0) rectangle (1,1);
    \foreach \x/\y in {0/0, 1/0, 0/1, 1/1}
        \fill[black] (\x,\y) circle (3pt);
    \node at (0.5, 0.5) {$R$};

    \draw[black,thick] (1,0) rectangle (2,1);
    \foreach \x/\y in {1/0, 2/0, 1/1, 2/1}
        \fill[black] (\x,\y) circle (3pt);
    \node at (1.5, 0.5) {$R$};

    \draw[black,thick] (2,0) rectangle (3,1);
    \foreach \x/\y in {2/0, 3/0, 0/1, 3/1}
        \fill[black] (\x,\y) circle (3pt);
    \node at (2.5, 0.5) {$R$};

    \draw[green, very thick] (1,1) rectangle (2,2);
    \foreach \x/\y in {1/1, 2/1, 1/2, 2/2}
        \fill[green] (\x,\y) circle (3pt);
    \node at (1.5, 1.5) {$R$};

    \draw[green, very thick] (2,1) rectangle (3,2);
    \foreach \x/\y in {2/1, 3/1, 2/2, 3/2}
        \fill[green] (\x,\y) circle (3pt);
    \node at (2.5, 1.5) {$R$};

    \draw[green, very thick] (2,2) rectangle (3,3);
    \foreach \x/\y in {2/2, 3/2, 2/3, 3/3}
        \fill[green] (\x,\y) circle (3pt);
    \node at (2.5, 2.5) {$L$};

    \draw[black,thick] (3,3) rectangle (4,4);
    \foreach \x/\y in {3/3, 4/3, 3/4, 4/4}
        \fill[black] (\x,\y) circle (3pt);
    \node at (3.5, 3.5) {$\epsilon$};
    
    \draw[green, very thick] (2,3) rectangle (3,4);
    \foreach \x/\y in {2/3, 3/3, 2/4, 3/4}
        \fill[green] (\x,\y) circle (3pt);
    \node at (2.5, 3.5) {$L$};    
    
    \draw[black, thick] (-1, 0) -- (0 , 0);
    \draw[black, thick] (4, 4) -- (4 , 5);
    \filldraw (-1,0) circle (3pt);
    \filldraw (4, 5) circle (3pt);
\end{tikzpicture}
    \caption{Subposets $P(\bw)_{[1:2]}$ colored by red (left), $J(P(\bw))_{[1:2]}$ colored by green (right) for $\bw = \epsilon LRR$.}
    \label{fig:subposet}
\end{figure*}

\begin{lemma} \label{lemma: red L turn}
    The set of all length $2$ paths in the Hasse diagram of $J(P(\bw))$ that contributes one ascent is exactly the set of all red $L$-turns in $J(P(\bw))$. 
\end{lemma}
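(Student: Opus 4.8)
The plan is to translate the statement into the language of linear extensions and then argue separately for the three possible shapes of a length-two path. Recall from Construction~\ref{construction:red_L_turns} that a maximal chain $\mathcal{C}$ in $J(P(\bw))$ records a linear extension $e$ of $P(\bw)$, where traversing a cover relation of $J(P(\bw))$ (a unit step right or up in the embedding of Construction~\ref{embedding}) appends the unique poset element by which the two filters differ. Consequently a length-two subpath of $\mathcal{C}$ contributes exactly one ascent to $e$ if and only if its two successive edge labels $a,b$ satisfy $a<b$, and no ascent otherwise (here $a\neq b$ always). Since every step is horizontal or vertical, a length-two path has one of three shapes: two horizontal edges (HH), two vertical edges (VV), or an L-turn (HV or VH). Because a maximal chain only moves right and up, every L-turn traverses a unique square face of $J(P(\bw))$ from its bottom-left corner to its top-right corner.

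First I would dispatch the L-turns, where the equivalence is essentially built into Construction~\ref{construction:red_L_turns}. Fix a square face with corners $(i,j),(i+1,j),(i,j+1),(i+1,j+1)$. Passing from $(i,j)$ to $(i+1,j+1)$ appends the same two poset elements $a$ (the label of the horizontal edge) and $b$ (the label of the vertical edge) regardless of route, but in the two opposite orders $ab$ and $ba$. As $a\neq b$, exactly one of these orders is increasing; by definition that increasing route is the red L-turn of this square, and it is the one contributing an ascent. Hence an L-turn contributes one ascent if and only if it is a red L-turn, and every square contributes exactly one such turn.

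It remains to rule out the straight paths, which is the crux. I would prove the monotonicity statement that edge labels strictly decrease along every maximal horizontal run (read left to right) and along every maximal vertical run (read bottom to top); granting this, an HH path has labels $a>b$ and a VV path has labels $a>b$, so neither contributes an ascent and the proof is complete. To establish monotonicity I would induct on the length of $\bw$ using the recursive description of $J(P(\bw))$ in Lemma~\ref{lemma: the labeling of J(P)} together with the explicit coordinates of Construction~\ref{embedding}. The inductive step adjoins a single new row (if $w_n=R$) or column (if $w_n=L$) of filters $\langle 2n+3\rangle,\langle 2n+2\rangle,\langle 2n+2,2k+2\rangle,\langle 2n+2,2k+2i+1\rangle$; one checks that the labels on the new collinear edges decrease in the prescribed direction and splice monotonically onto the (shifted) labels inherited from $J(P(w_0w_1\cdots w_{n-1}))$. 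A convenient bookkeeping device is to record, for each non-extremal rank $\{2n+3-2i,\,2n+4-2i\}$ of $P(\bw)$, which element is removed along a horizontal step and which along a vertical step; tracking this through the recursion shows that the labels encountered along any single run appear in strictly decreasing order.

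The main obstacle is precisely this last monotonicity claim: unlike the L-turn case it is not definitional and requires carefully tracking how the element labels of $P(\bw)$ attach to edges of the embedding as the word grows. The potential pitfall is the splicing at the corner where the newly added run meets the previous staircase, and at the two boundary edges incident to $(-1,0)$ and to the top vertex, where the extremal labels $2n+3$ and $0$ occur; these must be verified directly to confirm the runs remain strictly monotone across the join.
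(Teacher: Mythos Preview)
Your shape-based trichotomy (HH, VV, L-turn) is a cleaner organising principle than the paper's proof, which instead decomposes $P(\bw)$ into maximal single-letter blocks $P(\bw)_{[i:j]}$ and argues by case analysis that every ascent pair $(a_1,a_2)$ must consist of the two same-rank elements $(2k+1,2k+2)$. But your argument has a genuine gap.

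The assertion ``every L-turn traverses a unique square face of $J(P(\bw))$'' is false. In the staircase embedding there are L-turns at \emph{concave} corners, where the fourth lattice point is absent from $J(P(\bw))$. For example, in $J(P(\epsilon LR))$ (Figure~\ref{fig: construction example}, left) the HV path $(0,1)\to(1,1)\to(1,2)$ sits at such a corner: $(0,2)\notin J(P(\epsilon LR))$, so there is no square and no companion route. Its edge labels are $5,3$. Your three cases do not cover this situation---neither the red-L-turn argument (which presumes a square) nor the run-monotonicity argument (which presumes a straight path) applies---so you have not shown that concave L-turns contribute no ascent.

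The repair is short and in fact subsumes your monotonicity induction. A length-two path $A\to B\to C$ in $J(P(\bw))$ admits an alternative route $A\to B'\to C$ precisely when the two elements $p,q$ removed from the filter are incomparable in $P(\bw)$; this is exactly the case of an L-turn inside a square. In every remaining case (HH, VV, and concave L-turns) the two removed elements are comparable. Since filters are up-closed, comparability forces the $P$-smaller element to be removed first; and because the integer labeling of $P(\bw)$ is reverse-natural (the minimal element $\widehat 0=2n+3$ carries the largest label), the first-removed element has the larger integer label. Hence the pair is a descent, not an ascent. This single observation handles HH, VV, and concave L-turns uniformly, and incidentally proves your run-monotonicity claim without the inductive splice---the step you yourself flagged as the main obstacle.
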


\begin{proof}
    To avoid confusion, denote $\precdot_{\mathcal{L}}$ to be the cover relation in terms of the total order relation defined by a linear extension $\mathcal{L}$ and $<$ to be the natural ordering of integers.

    First, note that every pair of elements $(a_1, a_2)$ that satisfies both $a_1 \precdot_{\mathcal{L}} a_2$ and $a_1 < a_2$ for some linear extension $\mathcal{L}$, is included entirely in either one of the following two types of subposets of $P(\bw)$:
    \begin{enumerate}
        \item $P(\bw)_{[i:j]}$, where $\bw[i : j]$ is any substring of $\bw$ consisting of only one type of letter. \label{case:1}
        \item $P(\bw)_{[m:m+1]}$, where $\bw[m:m+1] \in \{RL, LR\}$.\label{case:2}
    \end{enumerate}
    Thus, it is sufficient to prove that the lemma holds, for each cases.
    
    We first prove the lemma for elements in a subposet of type (\ref{case:1}).
    Let $\bw[i : j]$ be any substring of $\bw$ consisting of only one type of letter, $L$.
    Take $\mathcal{L}$ to be any linear extension of $P(\bw)$.
    By doing a case analysis on all the pairs of elements in $P(\bw)_{[i:j]}$, say $(a_1, a_2)$, such that satisfies both $a_1 \precdot_{\mathcal{L}} a_2$ and $a_1 < a_2$, observe that either 
    \begin{enumerate}
    \item[(i)]  no such pair exists, or
    \item[(ii)] there exist such pairs and all of them are of the form $(2k+1, 2k+2a+2)$, where $2k+1, 2k+2a+2 \in P(\bw)_{[i:j]}$ and $k, a \in \Z_{\geq 0}$.
    \end{enumerate}

    From the preceding facts, we have that the set of all length 2 paths in the Hasse diagram of $J(P(\bw))_{[i:j]}$ that contributes one ascent is of the form 
    \[\langle 2k+1, 2k+2a+2 \rangle \rightarrow \langle 2k-1, 2k+2a+2 \rangle \rightarrow \langle 2k-1, 2k + 2a \rangle\]
    
    for $\langle 2k+1, 2k+2a+2 \rangle, \langle 2k-1, 2k+2a+2 \rangle, \langle 2k-1, 2k+2a \rangle \in J(P(\bw))_{[i:j]}$ (We define $\langle -1, x \rangle := \langle x \rangle$).
    Here, note that \[\langle 2k+1, 2k+2a+2 \rangle \rightarrow \langle 2k-1, 2k+2a+2 \rangle \rightarrow \langle 2k-1, 2k+2a \rangle\]
    is a $L$ turn in the Hasse diagram of $J(P(\bw))_{[i:j]}$, allowing us to conclude that every length $2$ paths in $J(P(\bw))_{[i:j]}$ that contributes to an ascent is a $L$ turn, specifically a red $L$ turn as defined in Construction \ref{construction:red_L_turns}.

    The case where $\bw[i : j]$ is any maximal substring of $\bw$ consisting of only one type of letter $R$ is analogous.
    Thus, we have shown that the lemma holds for the elements in a subposet of type (\ref{case:1}).

    Next we prove the lemma for elements in a subposet of type (\ref{case:2}).
    Observe that every three elements $\ba_1, \ba_2, \ba_3$ in $J(P(\bw))_{[m:m+1]}$ where $\bw[m:m+1] \in \{RL, LR\}$ that satisfies both $(\ba_2 \setminus \ba_1 )\precdot_{\mathcal{L}} (\ba_3 \setminus \ba_2)$ and $(\ba_2 \setminus \ba_1) < (\ba_3 \setminus \ba_2)$, is completely contained in either $J(P(\bw))_{[m:m]}$ or $J(P(\bw))_{[m+1:m+1]}$.
    Thus, every pair of elements in $P(\bw)_{[m:m+1]}$, say $(a_1, a_2)$, that satisfies both $a_1 \precdot_{\mathcal{L}} a_2$ and $a_1 < a_2$ is in $P(\bw)_{[m:m]}$ or $P(\bw)_{[m+1:m+1]}$.
    Hence, as \[\bw[m:m], \bw[m+1:m+1] \in \{R, L\},\] every such pair $(a_1, a_2)$ is also a pair of elements in a subposet of type ($\ref{case:1}$), moreover such pair of elements that correspond to an ascent, form also the only two elements in a specific rank of $P(\bw)$.
    Therefore, as the first case was already shown, the proof is complete.
\end{proof}

Lemma \ref{lemma: red L turn} establishes a bijection between $\mathcal{M}_k$ and $\mathcal{D}_k$.

\begin{theorem} \label{thm: bijection}
    Let $e: \mathcal{M} \to \mathcal{D}$ be the bijection mapping maximal chains in $J(P(\bw))$ to their corresponding linear extensions of $P(\bw)$, then for any $k$ such that $0 \leq  k \leq \textit{deg}(h^*)$.
We have that
    $e|_{\mathcal{M}_k}$ is a bijection between $\mathcal{D}_k$ and $\mathcal{M}_k$.
\end{theorem}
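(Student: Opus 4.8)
The plan is to exploit the fact that $e\colon \mathcal{M} \to \mathcal{D}$ is already the standard correspondence between maximal chains of $J(P(\bw))$ and linear extensions of $P(\bw)$: reading a maximal chain from the bottom element $(-1,0)$ upward, each covering step records the next element appended to the word $e(\mathcal{C})$, and this assignment is a bijection on the full sets. Granting this, $e|_{\mathcal{M}_k}$ is automatically injective, so the whole theorem collapses to the single statistic-matching identity
\[
\#\{\text{red } L \text{ turns contained in } \mathcal{C}\} = \operatorname{asc}(e(\mathcal{C})) \qquad \text{for every } \mathcal{C} \in \mathcal{M}.
\]
Indeed, a bijection that intertwines two statistics restricts to a bijection between their level sets, so this identity gives $e^{-1}(\mathcal{D}_k) = \mathcal{M}_k$ and hence $e|_{\mathcal{M}_k}\colon \mathcal{M}_k \to \mathcal{D}_k$ at once (for $k$ outside the range $0 \le k \le \deg h^*$ both sides are empty, so nothing is lost).

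First I would fix the bookkeeping. Writing $e(\mathcal{C}) = (\pi_1, \dots, \pi_d)$ with $\pi_m$ the element added at the $m$-th step, the quantity $\operatorname{asc}(e(\mathcal{C}))$ counts exactly the positions $m$ with $\pi_m < \pi_{m+1}$. Each such position is a purely local datum attached to the length-$2$ subpath of $\mathcal{C}$ formed by its $m$-th and $(m+1)$-th steps, which share the intermediate order ideal as common vertex. Conversely, because $\mathcal{C}$ is a monotone lattice path, any red $L$ turn both of whose edges lie in $\mathcal{C}$ must be traversed by two consecutive steps of $\mathcal{C}$; its shared middle vertex forces adjacency. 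Thus the red $L$ turns contained in $\mathcal{C}$, in the sense of Construction \ref{construction:red_L_turns}, are precisely the consecutive step-pairs of $\mathcal{C}$ that happen to be red $L$ turns. Both tallies on the two sides of the identity are therefore indexed by the same $d-1$ consecutive step-pairs, and it remains to match them pair-by-pair.

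This pairwise matching is exactly Lemma \ref{lemma: red L turn}, which I would invoke directly: a length-$2$ path in the Hasse diagram of $J(P(\bw))$ contributes one ascent if and only if it is a red $L$ turn, and otherwise contributes none. Summing this equivalence over all consecutive step-pairs of $\mathcal{C}$ yields the displayed identity, whence $\mathcal{C} \in \mathcal{M}_k \iff e(\mathcal{C}) \in \mathcal{D}_k$, completing the argument.

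The main obstacle here is not a hard estimate but the careful alignment of three descriptions of the same local object — an ascent of the word $e(\mathcal{C})$, a consecutive pair of covering relations in $\mathcal{C}$, and a red $L$ turn ``contained'' in $\mathcal{C}$. One must verify that no ascent can arise from a non-adjacent pair (immediate from the definition of $\operatorname{asc}$ on adjacent entries) and that every contained red $L$ turn is counted once and only once (guaranteed by the shared-vertex adjacency just noted), with Lemma \ref{lemma: red L turn} absorbing the genuine combinatorial content, namely which of the two square-face traversals is the ascent-producing one.
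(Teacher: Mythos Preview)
Your proposal is correct and follows exactly the general argument the paper intends: the sentence immediately preceding the theorem states that Lemma~\ref{lemma: red L turn} establishes the bijection, and your write-up is a careful unpacking of precisely that implication via the statistic-matching identity $\#\{\text{red }L\text{ turns in }\mathcal{C}\}=\operatorname{asc}(e(\mathcal{C}))$. The paper's displayed proof treats only the regular snake case by an explicit hands-on case analysis (tracking the labels $x(ab)y$ contributed by side squares), whereas you give the general argument the paper alludes to; the two are not different routes so much as different levels of generality of the same idea.
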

Specifically, the $\mathcal{D}_k$ and $\mathcal{M}_k$ for a regular snake word are in bijection. 
We provide a more explicit proof of the regular snake case.

\begin{proof}[Proof of Theorem 4.7 for the regular snake case]

For consistency, take a regular snake word $\bw$ that ends with $R$. 
It suffices to prove that if a length 2 path does not form a red L turn, it will not contribute to, the set of ascents in the linear extension the path corresponds to.

 Label the squares in $J(P(\bw))$ with $S_{i, j}$ and $C_i$ as in Figure \ref{fig: snakeproof}, where they can be thought of as side square faces and central square faces.
\begin{enumerate}
    \item If $\mathcal{C}$ does not contain a red L turn from any $S_{i, i+1}$, then linear extension $e(\mathcal{C})$, has the form $(2n+1)(a_1b_1) (a_2b_2) \cdots (a_n b_n) 0$, where $a_ib_i$ are the two numbers contributed by a certain square.
    Then $\min(a_i, b_i) =  1 + \max(a_{i + 1}, b_{i + 1})$, namely $b_i > a_{i + 1}$, thus $b_i a_{i + 1}$ does not contribute the set of ascents in the linear extension. 
    Thus, the claim holds in this case.
    \item Now suppose $\mathcal{C}$ does contain a red L turn coming from $S_{i, i+1}$. 
If $S_{i, i+1}$ is below the central chain (colored in blue, vertices on the line $y = x$), for example, the red L turn $(1,0) \to (2,0) \to (2,1)$ in $S_{1,2}$ in Figure \ref{fig: snakeproof}. 
It has this path segment (in coordinates): 
$$
(i , i) \in C_i \to (i + 1, i) \to (i + 2, i) \to (i +2, i + 1) \to (i + 2, i + 2) \in C_{i + 1}
$$
If $S_{i, i+1}$ is above the central chain, it has this path segment: $$
(i , i) \in C_i \to (i , i + 1) \to (i, i + 1) \to (i + 1, i + 2) \to (i + 2, i + 2) \in C_{i + 1}
$$

this segment contributes $x(ab)y$ to the linear extension $e(\mathcal{C})$, where $x,y$ are from the central squares, $ab$ is from $S_{i, i+1}$). We determine $x(ab)y$ explicitly:

the central point $(i, i)$ is the upper order ideal $\{0, 1, 2, \cdots, 2n - 2i\}$, so 
\begin{align*}
    \{x, a, b, y\} &= \{0, 1, 2, \cdots, 2n - 2i\} \setminus \{0, 1, 2, \cdots, 2n - 2i - 4\} \\ 
    &= \{2n - 2i, 2n - 2i - 1, 2n - 2i -2 , 2n - 2i - 3\}
\end{align*}
If $S_{i, i+1}$ is below the central chain:
$$ x= 2n - 2i - 1, \text{\;\;\;} a = 2n - 2i - 2, \text{\;\;\;} b = 2n - 2i, \text{\;\;\;} y = 2n - 2i - 3.
$$
If $S_{i, i+1}$ is above the central chain:
$$ x= 2n - 2i - 1, \text{\;\;\;} a = 2n - 2i - 3, \text{\;\;\;} b = 2n - 2i, \text{\;\;\;} y = 2n - 2i - 2.
$$
both cases come from the construction of $J(P)$.
So in either case we can see that $a > x, b > y$, namely $ax$ and $by$ will not contribute to the set of ascents.
Thus, the claim holds for the regular snake. \qedhere
\end{enumerate}
\end{proof}

Next, we obtain the $h^*$-polynomial of the regular snake.

\begin{theorem}\label{thm: h*_snake}
The $h^*$-polynomial of $\mathcal{O}(P(\bw))$ when $\bw$ is a regular snake word of length $n - 1, n \geq 1$, i.e., $\bw= \epsilon LRLR\dots$ or $\epsilon RLRL \dots$, is of the form
        \[
            h^*(\bw; z) = \sum\limits_{i = 0}^n D(n - i, i)z^i, 
        \]
where $D(i,j)$ denotes the Delannoy number.   

\end{theorem}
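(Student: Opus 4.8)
The plan is to convert the statement into a weighted lattice-path count and then recognize the Delannoy numbers directly. By Lemma~\ref{lemma: Eulerian polynomial} (applied to the reversely-naturally-ordered poset $P(\bw)$, using $\operatorname{asc}$ in place of $\operatorname{des}$) we have $h^*(\bw;z)=\sum_k |\mathcal{D}_k|\,z^k$, and by Theorem~\ref{thm: bijection} we may replace $\mathcal{D}_k$ by $\mathcal{M}_k$, the set of maximal chains of $J(P(\bw))$ carrying exactly $k$ red $L$ turns. So, writing $m=n-1$ for the length of $\bw$, it suffices to prove that the number of maximal chains of $J(P(\bw))$ with exactly $k$ red $L$ turns equals $D(n-k,k)$.

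Next I would use the zigzag structure of $J(P(\bw))$ for a regular snake: it is a thin zigzag strip in $\Z^2$ whose ``central'' vertices lie along the main diagonal and which is built from $n$ central squares straddling the diagonal together with $n-1$ side squares bulging alternately above and below it---exactly the faces $C_i$ and $S_{i,i+1}$ introduced in the proof of Theorem~\ref{thm: bijection}. Cutting a maximal chain at the diagonal vertices it actually visits decomposes it into blocks, each of which either (i) joins two consecutive diagonal vertices through a central square, or (ii) skips a single diagonal vertex by routing through the adjacent side square (a skip of more than one vertex is impossible, since a side square connects only to its two neighbouring central squares). Recording a type-(i) block as a monomino and a type-(ii) block as a domino produces a tiling of a $1\times n$ strip, and conversely every such tiling arises uniquely.

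The heart of the argument is to match the red $L$ turn statistic with the endpoint of the associated path. By Construction~\ref{construction:red_L_turns} and Lemma~\ref{lemma: red L turn}, every red $L$ turn has both of its edges inside one square face and each square face carries exactly one red $L$ turn; in particular a turn occurring at a diagonal cut vertex, whose two edges belong to different square faces, is never red. Hence the red $L$ turns of a chain are counted block by block: a domino always traverses the unique red $L$ turn of its side square and contributes exactly one, while a monomino contributes one precisely when it takes the ascending route through its central square and zero otherwise (two free, equally available choices). Reading a non-red monomino as an east step $E$, a red monomino as a north step $N$, and a domino as a diagonal step $D$ turns the tiling into a lattice path counted by the Delannoy numbers, whose endpoint is $(\#E+\#D,\ \#N+\#D)=(n-k,k)$, where $k=\#N+\#D$ is exactly the number of red $L$ turns of the chain. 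Therefore maximal chains with $k$ red $L$ turns correspond bijectively to Delannoy paths from $(0,0)$ to $(n-k,k)$, of which there are $D(n-k,k)$, proving the theorem. As a consistency check one sees that $f_m(z):=\sum_k|\mathcal{M}_k|z^k$ then satisfies $f_m(z)=(1+z)f_{m-1}(z)+z\,f_{m-2}(z)$ with $f_0(z)=1+z$ and $f_1(z)=1+3z+z^2$, the same recurrence that $\sum_i D(m+1-i,i)z^i$ obeys by virtue of $D(a,b)=D(a-1,b)+D(a,b-1)+D(a-1,b-1)$.

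I expect the main obstacle to be making the block decomposition a rigorous bijection for an arbitrary regular snake, where the side squares alternate sides and the central squares alternate orientation: one must check that the cut vertices are forced, that each block is of type (i) or (ii) with no other possibilities, and---most delicately---that turns straddling two adjacent square faces never contribute an ascent, so that no red $L$ turn is created, lost, or double-counted at a block boundary. Lemma~\ref{lemma: red L turn} is precisely the tool that should settle this last point.
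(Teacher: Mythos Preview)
Your proposal is correct and follows essentially the same route as the paper. Both arguments reduce to Lemma~\ref{lemma: Eulerian polynomial} and Theorem~\ref{thm: bijection}, then decompose a maximal chain of $J(P(\bw))$ at the diagonal (``central'') vertices into elementary pieces---your monomino/domino blocks are exactly the paper's normal turn/red $L$ turn versus ``skip''---and send these pieces to the steps $E$, $N$, $D$ of a Delannoy path ending at $(n-k,k)$; the paper performs the same edge count $2a+2b+4c=2n$ with $a+c=k$ that your tiling language encodes.
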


\begin{proof}
By Lemma \ref{lemma: Eulerian polynomial}, we have that
\[h^*(\bw; z) = \sum\limits_{i = 0}^n |\mathcal{D}_i| z^i.\]
Since $|\mathcal{D}_i| = |\mathcal{M}_i|$ by Theorem \ref{thm: bijection}, it suffices to prove 
\[|\mathcal{M}_i| = D(n - i, i).
\]
We do so by constructing a bijection since $D(n - i, i)$ itself counts a set of paths in a different lattice.

We have a chain of central boxes $C_j$ as in the left Hasse diagram of Figure \ref{fig: snakeproof}, and some $S_{i, i + 1}$ between $C_i$ and $C_{i + 1}$. 
Again, for consistency, we take a regular snake word $\bw$ that ends with $R$.
If $S_{i, i + 1}$ is below the chain of central boxes, $(i, j) \to (i + 1, j) \to (i+1, j + 1)$ is the red L turn in $S_{i, i + 1}$; if $S_{i, i + 1}$ is above the chain of central boxes, $(i, j) \to (i, j + 1) \to (i+1, j + 1)$ is the red L turn in $S_{i, i + 1}$.

From $(i, i)$, a maximal chain can travel to $(i + 1, i + 1)$ via either a red $L$ turn or a normal turn, or it can skip $(i + 1, i + 1)$ and travel directly to $(i + 2, i + 2)$ by using points not in the central squares $C_{i + 1}$ and $C_{i + 2}$, since there is only one such point (either $(i + 2, i)$ or $(i, i + 2)$) between $(i, i)$ and $(i + 2 , i + 2)$, there is only one way of performing a \textit{skip} from $(i,i)$ to $(i + 2, i + 2)$.

Now let $G$ be a $\mathbb{Z}^2$ lattice with northeast edges added between $(i,j)$ and $(i + 1, j + 1)$, for all $i, j \in \mathbb{N}$.
Take any $ \mathcal{C} \in \mathcal{M}_k$, $\mathcal{C}$ consists of a sequence of red L turns, normal turns, and skips. 
A red $L$ turn in $\mathcal{C}$ corresponds to moving $(0,1)$ in $G$, a normal turn correspond to moving $(1,0)$, and a skip correspond to moving $(1,1)$, this way we have a map from $\mathcal{M}_k$ to the set $G_k$, defined as the set of paths starting from $(0, 0)$ and ending at $(n-k,k)$ in $G$, with steps $(1, 0)$, $(0, 1)$, and $(1, 1)$. Notice that $|G_k| = D(n-k, k)$ by the definition of Delannoy numbers.

We call this mapping $T: \mathcal{M}_k \rightarrow G_K$ (we will prove later that $T(\mathcal{C}) $ is indeed in $G_k$). 
\bigskip
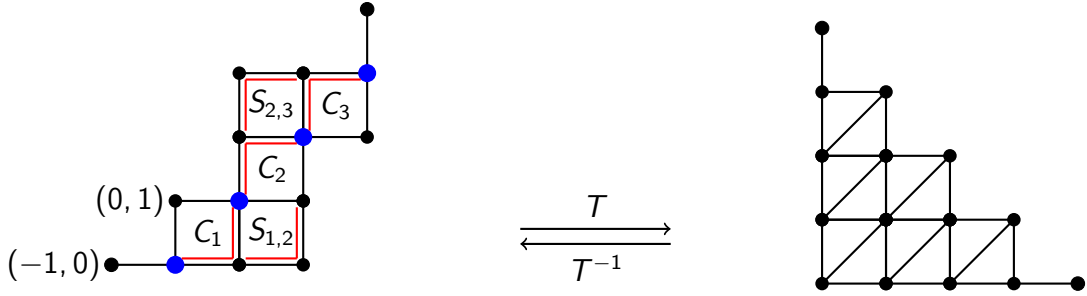
\begin{figure}[htbp]
    \centering
    \begin{tikzpicture}[scale=0.85]
    \def\xmin{-1}
    \def\xmax{3}
    \def\ymin{0}
    \def\ymax{4}

    \draw[black,thick] (0,0) rectangle (1,1);
    \foreach \x/\y in {0/0, 1/0, 0/1, 1/1}
        \fill (\x,\y) circle (3pt);
    \draw[black,thick] (1,0) rectangle (2,1);
    \foreach \x/\y in {1/0, 2/0, 1/1, 2/1}
        \fill (\x,\y) circle (3pt);
    \draw[black,thick] (2,2) rectangle (3,3);
    \foreach \x/\y in {2/2, 3/2, 2/3, 3/3}
        \fill (\x,\y) circle (3pt);
    \draw[black,thick] (1,1) rectangle (2,2);
    \foreach \x/\y in {1/1, 2/1, 1/2, 2/2}
        \fill (\x,\y) circle (3pt);
    \draw[black,thick] (1,2) rectangle (2,3);
    \foreach \x/\y in {1/2, 2/2, 1/3, 2/3}
        \fill (\x,\y) circle (3pt);

    \draw[black, thick] (-1, 0) -- (0 , 0);
    \draw[black, thick] (3, 3) -- (3, 4);
    \filldraw (-1,0) circle (3pt);
    \filldraw (3, 4) circle (3pt);
    \foreach \x/\y/\label in {1/1, 2/2, 3/3, 0/0}
        \fill[blue] (\x,\y) circle (4pt);

    \draw[red, thick] (0.1, 0.1) -- (0.9, 0.1);
    \draw[red, thick] (0.9, 0.1) -- (0.9, 0.9);
    
    \draw[red, thick] (1.1, 0.1) -- (1.9, 0.1);
     \draw[red, thick] (1.9, 0.1) -- (1.9, 0.9);
     
     \draw[red, thick] (1.1, 1.1) -- (1.1, 1.9);
     \draw[red, thick] (1.1, 1.9) -- (1.9, 1.9);
     
     \draw[red, thick] (1.1, 2.1) -- (1.1, 2.9);
     \draw[red, thick] (1.1, 2.9) -- (1.9, 2.9);
     
     \draw[red, thick] (2.1, 2.1) -- (2.9, 2.1);
     \draw[red, thick] (2.9, 2.1) -- (2.9, 2.9);

     \draw (- 1, 0) node[left] {$(- 1, 0)$};
     \draw (0, 1) node[left] {$(0, 1)$};
     \draw (0.5, 0.5) node[] {$C_1$};
     \draw (1.5, 0.5) node[] {$S_{1, 2}$};
      \draw (1.5, 1.5) node[] {$C_2$};
       \draw (2.5, 2.5) node[] {$C_3$};
       \draw (1.5, 2.5) node[] {$S_{2, 3}$};
    
\end{tikzpicture}
    \hspace{0.8 cm}
    \qquad 
\begin{tikzpicture}
    \def\xmin{-1}
    \def\xmax{4}
    \def\ymin{0}
    \def\ymax{4}
    \draw[->, thick] (1,4) -- (3,4) node[midway, above] {$T$};
    \draw[<-, thick] (1,3.8) -- (3,3.8) node[midway, below] {$T^{-1}$};
\end{tikzpicture}
\qquad %
\hspace{0.8 cm}
    \begin{tikzpicture}[scale=0.85]
    \def\xmin{-1}
    \def\xmax{3}
    \def\ymin{0}
    \def\ymax{4}

    \draw[black,thick] (0,0) rectangle (1,1);
    \foreach \x/\y in {0/0, 1/0, 0/1, 1/1}
        \fill (\x,\y) circle (3pt);
    \draw[black,thick] (1,0) rectangle (2,1);
    \foreach \x/\y in {1/0, 2/0, 1/1, 2/1}
        \fill (\x,\y) circle (3pt);
    \draw[black,thick] (2,0) rectangle (3,1);
    \foreach \x/\y in {2/0, 3/0, 2/1, 3/1}
        \fill (\x,\y) circle (3pt);

    \draw[black,thick] (0,2) rectangle (1,3);
    \foreach \x/\y in {0/2, 1/2, 0/3, 1/3}
        \fill (\x,\y) circle (3pt);
    \draw[black,thick] (0,1) rectangle (1,2);
    \foreach \x/\y in {0/1, 1/1, 0/2, 1/2}
        \fill (\x,\y) circle (3pt);
    \draw[black,thick] (1,1) rectangle (2,2);
    \foreach \x/\y in {1/1, 2/1, 1/2, 2/2}
        \fill (\x,\y) circle (3pt);

    \draw[black, thick] (3, 0) -- (4, 0);
    \draw[black, thick] (0, 3) -- (0, 4);
    \draw[black, thick] (0, 0) -- (1, 1);
    \draw[black, thick] (1, 0) -- (2, 1);
    \draw[black, thick] (2, 0) -- (3, 1);
    \draw[black, thick] (0, 1) -- (1, 2);
    \draw[black, thick] (0, 2) -- (1, 3);
    \draw[black, thick] (1, 1) -- (2, 2);

    \filldraw (4, 0) circle (3pt);
    \filldraw (0, 4) circle (3pt);

\end{tikzpicture}
    \caption{Red L turns, blue central points in $J(P(\epsilon LR))$, and $G$ (on the right)}
    \label{fig: snakeproof}
\end{figure}

\noindent{$T$ is a \textit{Bijection:}}\hfill
\\
\indent We first show that $T(\mathcal{C})$ is in $G_k$. 
Recall that $\mathcal{C}$ only consists of the three basic steps: a normal turn, red L turn, and skip. 
Since a skip contains a red L turn, so either a red $L$ turn or a skip add $1$ to the $y$ coordinate of $T(\mathcal{C})$. 
Since $\mathcal{C} \in \mathcal{M}_k$, it has $k$ red $L$ turns and skips combined, thus $T(\mathcal{C})$ ends at $y = k$ in $G$.
    
 As for the $x$ coordinate of $T(\mathcal{C})$ in $G$, let $a,b,c$ be the number of red $L$ turns, normal turns, and skips, respectively.
 Note that the total number of edges $\mathcal{C}$ uses from $0$ to $n$ is always $2n$, thus $2a + 2b + 4c = 2n$ and $a + c = k$, so we deduce $b + c = n - k$.
 Further notice that since either a skip or normal turn contributes $1$ to the $x$ coordinate, $T(\mathcal{C})_x = b + c = n -k$, thus $T(\mathcal{C}) \in G_k$. 

Next, we verify that the inverse map is well-defined, i.e., whenever there is a $(1, 1)$ in $T(\mathcal{C})$, the corresponding maximal chain can take a skip without leaving $J(P(\bw))$. 

Take any $s \in G_k$, suppose the number of steps $s$ takes is $n - k$, where $k\geq 0$ is the number of $(1,1)$ step, we can assume $k \geq 1$ since the other case maps back to a valid $\mathcal{C}$ with zero skips. 
Suppose that $s$ takes its last $(1,1)$ step at the $i^{\text{th}}$ step, $i \leq n - k$, after completing the first $i-1$ steps, corresponding $\mathcal{C}$ halts at $2(k-1) + i - 1 -(k -1 ) = (i + k -2)^{\text{th}}$ central point, since $i + k -2 \leq n -2 $, so $\mathcal{C}$ can still take a skip at the $(i + k -2)^{\text{th}}$ central point. 
Hence, we have inductively shown that $\mathcal{C} = T^{-1}(s)$ is a valid maximal chain which is in $\mathcal{M}_k$. Thus, $T$ is surjective. 

Since $T^{-1}: G_k \rightarrow \mathcal{M}_k$ is well-defined by reversing the correspondence between basic steps in $J(P(\bw))$ and $G$, the same analysis applies to $T^{-1}$, so $T$ is also injective. 
Thus, the bijection is established.
\end{proof}

Next, we derive the $h^*$-polynomial of the ladder poset using red L turns.

\begin{theorem}\label{thm:h*_ladder}
The $h^*$-polynomial of $\mathcal{O}(P(\bw))$ when $\bw$ is a ladder of length $n$, i.e., $\bw= \epsilon LLLL\dots$ or $\epsilon RRRR \dots$, is of the form
        \[
            h^*(\bw; z) = \sum\limits_{i = 0}^{n + 1} N(n + 2, i + 1) z^i
        \]
where $N(x,y)$ denotes the Narayana number.   
\end{theorem}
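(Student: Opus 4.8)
The plan is to imitate the two-step reduction already used for the regular snake. By Lemma~\ref{lemma: Eulerian polynomial} together with the bijection $e|_{\mathcal{M}_k}$ of Theorem~\ref{thm: bijection}, one has $h^*(\bw;z)=\sum_{i\ge 0}|\mathcal{M}_i|\,z^i$, so it suffices to prove $|\mathcal{M}_i|=N(n+2,i+1)$, where $\mathcal{M}_i$ is the set of maximal chains of $J(P(\bw))$ carrying exactly $i$ red $L$ turns (by Remark~\ref{endswithR} we take the all-$R$ ladder). First I would fix the planar model: using Construction~\ref{embedding} and Lemma~\ref{lemma: the labeling of J(P)}, an induction on $n$ shows that $J(\mathcal{L}_n)$ embeds as the staircase whose cells are exactly the $(a,b)$ with $0\le b\le a\le n$, together with the two tail edges at $(-1,0)$ and at the top vertex, and that every cover relation is a grid edge. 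Hence a maximal chain is precisely a monotone (east/north) lattice path, with $n+2$ east and $n+2$ north steps, that stays inside the region, i.e.\ that satisfies $y\le x+1$ at every vertex. Reading off the height function $D:=x-y+1$, each such chain becomes a Dyck path of semilength $n+2$ (east $=$ up, north $=$ down, $D\ge 0$ throughout and $D=0$ at both endpoints).

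Next I would translate the red $L$ turn statistic. For the ladder every square face has its red $L$ turn equal to the east-then-north corner (this is the ascent-increasing turn, as in Lemma~\ref{lemma: red L turn} and Figure~\ref{fig: construction example}), so a chain contains the red $L$ turn of a present cell $(a,b)$ exactly when it makes an east-then-north corner at the vertex $(a+1,b)$. A feasible such corner at a vertex $(v_x,v_y)$ always satisfies $v_y\le v_x$, and it is a red $L$ turn iff the cell $(v_x-1,v_y)$ is present, i.e.\ iff $v_x>v_y$; the corners with $v_x=v_y$ (those touching the diagonal) are the non-red turns. In the coordinate $D$, each east-then-north corner is a peak of the Dyck path whose top sits at height $D=v_x-v_y+1$, so the red $L$ turns of a chain are exactly the peaks of height $\ge 2$, while the height-$1$ peaks are discarded. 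Thus $|\mathcal{M}_i|$ equals the number of Dyck paths of semilength $n+2$ with exactly $i$ peaks of height at least $2$.

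It then remains to prove the purely enumerative identity that, for every $m\ge 1$, the number of Dyck paths of semilength $m$ with exactly $i$ peaks of height $\ge 2$ equals $N(m,i+1)$. I would do this by generating functions using the first-return decomposition $\pi=\mathsf{u}\,\alpha\,\mathsf{d}\,\beta$ with $\alpha,\beta$ Dyck paths. Let $f(t,z)$ mark semilength by $t$ and peaks of height $\ge 2$ by $z$, and let $g(t,z)$ be the Narayana generating function marking all peaks. Lifting $\alpha$ under the initial arch turns every peak of $\alpha$ into a height-$\ge 2$ peak of $\pi$, while an empty $\alpha$ produces the discarded height-$1$ peak; this gives
\[
f = 1 + t\,g\,f, \qquad g = 1 + t\,(g-1+z)\,g .
\]
From the second equation $t g=(g-1)/(g-1+z)$, so $1-tg=z/(g-1+z)$ and hence $f=1/(1-tg)=(g-1+z)/z=(g-1)/z+1$. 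Extracting coefficients, for $m\ge 1$ we obtain $[t^m]f=\tfrac1z[t^m]g=\sum_{k\ge1}N(m,k)z^{k-1}=\sum_{i\ge0}N(m,i+1)z^i$, which is exactly the claim. Combined with the previous paragraph this yields $|\mathcal{M}_i|=N(n+2,i+1)$ and the asserted formula for $h^*(\bw;z)$.

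The routine part is the bookkeeping of the embedding; the two genuinely delicate points are (i) verifying that for the ladder the ascent-increasing turn is the east-then-north corner in \emph{every} cell, so that ``red $L$ turn'' becomes ``peak of height $\ge 2$'' — in particular that the diagonal peaks must be excluded, which is the source of the index shift $i\mapsto i+1$ — and (ii) the generating-function identity $f=(g-1)/z+1$, which is where the Narayana numbers (rather than a naive total peak count) genuinely enter. I expect (i) to be the main obstacle, since the off-by-one is easy to get wrong and is invisible for small $n$ because $N(m,\cdot)$ is symmetric.
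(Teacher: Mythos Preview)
Your argument is correct, but it takes a different route from the paper. Both proofs start from the same reduction (Lemma~\ref{lemma: Eulerian polynomial} and Theorem~\ref{thm: bijection}) and the same planar lattice-path model for $J(\mathcal{L}_n)$. The paper then decomposes a maximal chain according to the \emph{first} interior point where it touches the diagonal $y=x+1$; this yields a convolution recurrence for $h^*(\bw_n;z)$ that is identified with the standard nonlinear recurrence $\mathcal{N}_{n+2}=\mathcal{N}_{n+1}+z\sum_k \mathcal{N}_k\mathcal{N}_{n+1-k}$ for Narayana polynomials, and the result follows by induction on $n$. You instead pass to the height coordinate $D=x-y+1$, thereby identifying maximal chains with Dyck paths of semilength $n+2$ and the red $L$ turns with peaks of height $\ge 2$; you then prove the purely Dyck-path identity $f=(g-1)/z+1$ relating the high-peak and total-peak generating functions, which gives $|\mathcal{M}_i|=N(n+2,i+1)$ in one stroke.

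The two decompositions are cousins: the paper's first diagonal touch is your Dyck first return. What your approach buys is a clean combinatorial name for the statistic (peaks of height $\ge 2$) and a slick algebraic identity that avoids the induction and the separate bookkeeping of the $\mathcal{LP}^{(1)}/\mathcal{LP}^{(2)}$ split; it also makes transparent why the index shift $i\mapsto i+1$ appears (the discarded height-$1$ peaks). The paper's approach has the virtue of staying entirely inside its own lattice-path picture and of making direct contact with the classical Narayana recurrence. Your delicate point (i), that for the all-$R$ ladder every cell has its red $L$ turn equal to the east-then-north corner and hence diagonal peaks are exactly the non-red ones, is indeed the crux, and it is verified by the construction in Figure~\ref{fig: construction example}.
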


\begin{proof}
Let $\mathcal{N}_{n + 2}(z) = \sum\limits_{i = 0}^{n + 1} N(n + 2, i + 1) z^i$ denote the $(n + 2)^{\text{th}}$ Narayana polynomial, where $\mathcal{N}_0(z) = \mathcal{N}_1(z) = 1$.
A well-known non-linear recurrence relation of Narayana polynomials is given as follows for $n \geq 1$:
\begin{align*}
    \mathcal{N}_{n+2}(z) &= (1+z)\mathcal{N}_{n+1}(z) + z \sum_{k=1}^{n} \mathcal{N}_{k}(z)\mathcal{N}_{n-k+1}(z)\\
    &= \mathcal{N}_{n+1}(z) + z \sum_{k=0}^{n} \mathcal{N}_{k}(z)\mathcal{N}_{n-k+1}(z).
\end{align*}

We prove a similar recurrence relation for $h^*(\bw_n;z)$, where $\bw_n$ is a ladder of length $n$. The recurrence relation we will prove is the following:
\begin{equation}\label{narayanarec}
    h^*(\bw_n;z) = \mathcal{N}_{n+1}(z) + z \left( \sum_{k=0}^{n-1} \mathcal{N}_{k}(z) h^*(\bw_{n-k-1};z) + \mathcal{N}_{n}(z).\right)
\end{equation}

As $h^*(\bw_0;z) = z+1 = \mathcal{N}_2(z)$ and $h^*(\bw_1;z) = z^2+3z+1 = \mathcal{N}_3(z)$, we can deduce that $h^*(\bw_n;z) = \mathcal{N}_{n+2}(z)$ for all $n \geq 0$ by induction, since they share the same recurrence relation.

To prove Equation (\ref{narayanarec}), we consider the embedding of $J(P(\bw))$ onto $\Z^2$ (as depicted in the right picture of Figure \ref{fig: construction example}) where, without loss of generality, $\bw$ consists of the letter $L$'s and is of length $n$.
Note that a maximal chain of $J(P(\bw))$ is a lattice path from $(-1, 0)$ to $(n+1, n+2)$ that stays below the line $y = x + 1$. 
Let the set of all such lattice paths be denoted as $\mathcal{LP}$.

First, consider lattice paths that does not pass through any points that lie on the line $y = x + 1$ except for the points $(-1, 0)$ and $(n+1, n+2)$.
Let the set of all such lattice paths be denoted as $\mathcal{LP}^{(1)}$.
Observe that the ordinary generating function for the number of lattice paths in $\mathcal{LP}^{(1)}$ that passes through exactly $i$ red $L$-turns is $\mathcal{N}_{n+1}(z)$.

Next, consider the set of all lattice paths $\mathcal{LP}^{(2)}$ such that passes through at least one point that lies on the line $y = x + 1$ other than the points $(-1, 0)$ and $(n+1, n+2)$.
Let $\mathcal{LP}^{(2)}_k := \{l \in \mathcal{LP}^{(2)} \text{ } | \text{ }  \min_{(x, x+1) \in l}\{x : x \neq -1, x \neq n+1\} = k \}$.
Then, observe that the ordinary generating function for the number of lattice paths in $\mathcal{LP}^{(2)}_k$ that passes through exactly $i$ red $L$-turns is $z\mathcal{N}_{k}(z) h^*(\bw_{n-k-1};z)$ for $0 \leq k \leq n-1$, $z\mathcal{N}_n(z)$ for $k = n$, and $0$ for $k = n+1$.
Thus, we can deduce that the ordinary generating function for the number of lattice paths in $\mathcal{LP}^{(2)}$ that passes through exactly $i$ red $L$-turns is

\[z \left( \sum_{k=0}^{n-1} \mathcal{N}_{k}(z) h^*(\bw_{n-k-1};z) + \mathcal{N}_{n}(z)\right).\]

Therefore, the generating function for the number of lattice paths in $\mathcal{LP}$ that passes through exactly $i$ red $L$-turns is the right hand side of Equation (\ref{narayanarec}).
By Theorem \ref{thm: bijection}, we obtain Equation (\ref{narayanarec}).
The theorem follows since the following equalities holds:
\[
    \mathcal{N}_{n+2}(z) = z^{n+2} \sum\limits_{i = 0}^{n + 1} N(n + 2, i + 1) z^{-i} = \sum\limits_{i = 0}^{n + 1} N(n + 2, i + 1) z^i.
\]

\end{proof}

We now have that the $h^*$-polynomial obtained from Theorem \ref{thm:h*_ladder} aligns with $h^*$-polynomial using the combinatorial formula for chains in Theorem \ref{thm:chain_coefficients}.

\begin{example}\label{ex:narayana}
The following computation using Theorem \ref{thm:h*_ladder} yields the same result as in Example \ref{example: using section 3 formula}:
    \begin{align*}
        h^*(\epsilon R; z) &= N(3,1)z^0 + N(3,2)z^1 + N(3,3)z^2\\
        &= 1 + 3z + z^2.
    \end{align*}
\end{example}

Knowing the structure of the $h^*$-polynomial of $\mathcal{O}(P(\bw))$, allows us to obtain the form of the Ehrhart polynomial of $\mathcal{O}(P(\bw))$, as the following shows for the ladder.

\begin{corollary} \label{cor: ehrhart ladder}
The Ehrhart polynomial of $\mathcal{O}(P(\bw))$ when $\bw$ is a ladder, i.e., $\bw= \epsilon LLLL\cdots$ or $\epsilon RRRR \cdots$ of length $n$, is of the form
        \[
             L(\bw; t) = \frac{1}{(n + 2)!(n + 3)!}(t + 1)(t + 2)^2(t+3)^2\cdots (t + n +2 )^2(t + n +3).
        \]
    \label{formula: ehrhart of ladder}
\end{corollary}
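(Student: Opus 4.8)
The plan is to feed the $h^*$-polynomial computed in Theorem~\ref{thm:h*_ladder} into the standard expansion of the Ehrhart polynomial in the $h^*$-basis. Writing $d = \dim \mathcal{O}(P(\bw)) = 2n+4$ and recalling $L_P(t) = \sum_{i=0}^{d} h_i^* \binom{t+d-i}{d}$, Theorem~\ref{thm:h*_ladder} gives $h_i^* = N(n+2, i+1)$ for $0 \le i \le n+1$ and $h_i^* = 0$ otherwise, so that
\[
L(\bw; t) = \sum_{i=0}^{n+1} N(n+2, i+1)\binom{t+2n+4-i}{2n+4}.
\]
The corollary is therefore equivalent to the polynomial identity asserting that this binomial sum equals the claimed product $\tfrac{1}{(n+2)!(n+3)!}(t+1)(t+2)^2\cdots(t+n+2)^2(t+n+3)$, a degree $2n+4$ polynomial.

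To establish the identity I would evaluate the counting function $L(\bw;t)$ directly and check that it produces the product. By Lemma~\ref{lemma: Eulerian polynomial}, $L(\bw;t)$ equals the number of order-reversing maps $\mathcal{L}_n \to \{0, 1, \ldots, t\}$. The key observation is that the ladder poset $\mathcal{L}_n = P(\epsilon LL\cdots L)$ is isomorphic to the grid poset $\mathbf{2}\times\mathbf{(n+2)}$, which I would verify by induction on $n$: the base case $P(\epsilon) \cong \mathbf{2}\times\mathbf{2}$ is immediate from the cover relations, and each recursive step in the definition of $P(\bw)$ glues one square onto the end of the ladder, corresponding to extending the $(n+2)$-chain factor by one element. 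Under this identification, order-reversing maps $\mathbf{2}\times\mathbf{(n+2)}\to\{0,\ldots,t\}$ are exactly the plane partitions fitting inside a $2\times(n+2)\times t$ box.

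Applying MacMahon's box formula and telescoping the innermost product over the height gives
\[
L(\bw;t) = \prod_{i=1}^{2}\prod_{j=1}^{n+2}\frac{i+j+t-1}{i+j-1} = \frac{\prod_{j=1}^{n+2}(t+j)}{(n+2)!}\cdot\frac{\prod_{j=2}^{n+3}(t+j)}{(n+3)!},
\]
and merging the two numerators---in which $(t+1)$ and $(t+n+3)$ each appear once while each of $(t+2),\ldots,(t+n+2)$ appears twice---yields precisely the stated product. As a consistency check one confirms that the leading coefficient equals $\tfrac{1}{(n+2)!(n+3)!} = \mathrm{Cat}(n+2)/(2n+4)!$, which matches the normalized volume $\mathrm{Cat}(n+2)$ of Theorem~\ref{thm:O(P(w))Volume}, and that the root pattern is compatible with the divisibility and symmetry in Theorem~\ref{thm:roots}.

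I expect the main obstacle to be the poset isomorphism $\mathcal{L}_n \cong \mathbf{2}\times\mathbf{(n+2)}$, since once it is in place MacMahon's formula and the telescoping are routine. An alternative that stays closer to the paper's $h^*$ machinery is a direct induction on $n$: one proves the multiplicative recurrence $Q_n(t) = \tfrac{(t+n+2)(t+n+3)}{(n+2)(n+3)}\,Q_{n-1}(t)$ for the product $Q_n$ and matches it against the binomial sum using the Narayana recurrence from the proof of Theorem~\ref{thm:h*_ladder}. However, the convolutional nature of that Narayana recurrence, together with the shift in dimension from $2n+4$ to $2n+2$, makes this route considerably more delicate, which is why I would favor the plane-partition argument.
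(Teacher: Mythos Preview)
Your argument is correct, and it coincides with the route the paper itself sketches: the text states the corollary without proof (as a consequence of Theorem~\ref{thm:h*_ladder}), and the remark immediately afterward outlines exactly your plane-partition approach, namely the bijection between lattice points of $t\,\mathcal{O}(\mathcal{L}_n)$ and plane partitions in a $2\times(n+2)\times t$ box followed by MacMahon's formula $N_1(2,n+2,t)$.

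One small comment on presentation: the detour through the binomial expansion $\sum_i N(n+2,i+1)\binom{t+2n+4-i}{2n+4}$ is unnecessary for the corollary as stated. Your MacMahon computation already shows directly that $L(\bw;t)$ equals the claimed product, so the identity between that product and the Narayana binomial sum is a byproduct rather than something you need to establish. The paper likewise treats the $h^*$ route and the MacMahon route as two independent paths to the same Ehrhart polynomial.
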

\begin{remark}
Note that Corollary \ref{cor: ehrhart ladder} can be proved without relying on the formula for $h^*(\bw; z)$.
A sketch of this alternate proof is as follows. 
Consider \[\mathcal{B} (r, s,t):= \{((i,j,k)\;|\; 1\leq i \leq r, 1\leq j \leq s, 1\leq k \leq t)\},\] then the number of plane partitions in $\mathcal{B}(r,s,t)$ is given in \cite{MR0141605} by \[N_1(r,s,t) = \prod\limits_{i = 1}^r \prod\limits_{j = 1}^s \frac{i + j + t - 1}{i + j -1 }.\]
One can then biject the integer points in $t\calO(P(\bw))$ with plane partitions in $\mathcal{B}(2, n + 2, t)$, in other words, $L(\bw; t) = N_1(2, n +2, t) $.
Then applying MacMahon's formula we obtain $L(\bw; t)$.
\end{remark}

\subsection{\texorpdfstring{$h^*$}{}-polynomial recurrence}
We prove a recurrence formula for the $h^*$-polynomial in this section.

We proceed by first establishing some extra definitions and notation. 
Take $\bw$ to be a generalized snake word of length $n$.  
\begin{itemize}
    \item Let $\bw[i:j]$ denote the subword of $\bw$ from the $i^{\text{th}}$ index to the $j^{\text{th}}$. Note that $\bw[i:j]$ is not always a generalized snake word, as the first letter is not $\epsilon$ unless $i=1$.

    \item We say that $\bw[i:j]$ for $i \leq j$ is a \defterm{maximal sub-ladder}, if and only if $w_{i-1} \neq w_i$ and $w_j \neq w_{j+1}$, and $\bw[i:j]$ consists of the same letter.

    \item Denote the subword $\bw[0:n-1]$ by $\bw'$.
    \item Denote the subword $\bw[0:i]$ by $\bw^{*}$ if $\bw[i+1:n]$ is a maximal sub-ladder.

\end{itemize}

\begin{example}
Consider the generalized snake word $\bw = \epsilon RRLLLRRR$. 
Then, \[\bw' = \epsilon RRLLLRR \, \text{ and }\,  \bw^* = \epsilon RRLLL.\]
\end{example}

\begin{theorem} \label{thm: recurrence for h*}
Consider a generalized snake word  $\bw$ and take $\bq$ to be $\epsilon$ concatenated by the right-most maximal sub-ladder of $\bw$. 
Then the following recurrence relation holds.
    \[
        h^{*}(\bw;z) = h^{*}((\bw^{*})';z) \cdot h^{*}(\bq;z) + h^{*}(\bw^{*};z) \cdot h^{*}(\bq';z) - (z+1) \cdot h^{*}((\bw^{*})';z) \cdot h^{*}(\bq';z)
    \]
Take the $h^{*}$-polynomials of ladders as base cases and take $h^{*}(\epsilon; z) := z+1$.
\end{theorem}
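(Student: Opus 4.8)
The plan is to pass to the maximal-chain model and prove the recurrence there. By Lemma~\ref{lemma: Eulerian polynomial} together with the bijection $e|_{\mathcal{M}_k}$ of Theorem~\ref{thm: bijection}, we have $h^{*}(\bw;z)=\sum_{k\ge 0}|\mathcal{M}_k|\,z^{k}$; that is, $h^{*}(\bw;z)$ is the generating function weighting each maximal chain of $J(P(\bw))$ by $z$ to the number of red L turns it contains, so it suffices to establish the stated identity for these generating functions. First I would locate the junction: writing $\bq=\bw[i:n]$ for the right-most maximal sub-snake, the maximality condition $w_{i-1}=w_i\neq w_{i+1}$ says that, in the embedding of Construction~\ref{embedding}, the ribbon $J(P(\bw))$ is a body coming from the prefix $\bw^{*}=\bw[0:i]$, which ends in a straight run, with the zigzag produced by $\bq$ attached at the bottom. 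Using Definition~\ref{segment} and the fact that the red L turn is local to each square (Construction~\ref{construction:red_L_turns} and Lemma~\ref{lemma: red L turn}), I would exhibit the sub-ribbons of $J(P(\bw))$ that carry the red-L-turn structure of $J(P(\bw^{*}))$, $J(P((\bw^{*})'))$, $J(P(\bq))$, and $J(P(\bq'))$, so that the red-L-turn count of a full chain is the sum of the counts of its pieces and the weights multiply across the cut.

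The enumeration is then an inclusion--exclusion over the junction. Every maximal chain enters through the $\bq$-part at the bottom and leaves through the $\bw^{*}$-part at the top, and it must cross the single square at the transition, which plays the role of the $\epsilon$-square of $\bq$. I would split the set $\mathcal{M}$ of maximal chains into the class $\mathcal{A}$ whose crossing is recorded on the $\bq$-side, contributing the complete factor $h^{*}(\bq;z)$ paired with a chain of the reduced body $h^{*}((\bw^{*})';z)$, and the class $\mathcal{B}$ whose crossing is recorded on the $\bw^{*}$-side, contributing $h^{*}(\bw^{*};z)$ paired with $h^{*}(\bq';z)$. Since weights multiply, the two classes have generating functions $h^{*}((\bw^{*})';z)\,h^{*}(\bq;z)$ and $h^{*}(\bw^{*};z)\,h^{*}(\bq';z)$, and $\mathcal{A}\cup\mathcal{B}=\mathcal{M}$. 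The overlap $\mathcal{A}\cap\mathcal{B}$ consists of the chains whose crossing uses only the shared square and reduces on either side to $(\bw^{*})'$ and to $\bq'$; its generating function is $(z+1)\,h^{*}((\bw^{*})';z)\,h^{*}(\bq';z)$, where the factor $z+1=h^{*}(\epsilon;z)$ records the two traversals of that one square, namely a normal turn of weight $1$ and a red L turn of weight $z$. Then $|\mathcal{A}\cup\mathcal{B}|=|\mathcal{A}|+|\mathcal{B}|-|\mathcal{A}\cap\mathcal{B}|$ yields exactly the three-term formula. The base cases are the pure regular snakes and pure ladders, where the peeling is degenerate ($\bq=\bw$ or $\bq=\epsilon$); these are supplied by Theorems~\ref{thm: h*_snake} and~\ref{thm:h*_ladder} together with the convention $h^{*}(\epsilon;z)=z+1$.

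The hard part is the junction analysis supporting the previous paragraph. Because the number of squares contributed by a letter depends on its neighbours, the sub-ribbons do not tile $J(P(\bw))$, so the correspondences between chain-pieces and the maximal chains of $J(P(\bw^{*}))$, $J(P((\bw^{*})'))$, $J(P(\bq))$, and $J(P(\bq'))$ must be set up with care, checking in each case that restriction is a bijection onto maximal chains of the sub-ribbon and that the red-L-turn counts are genuinely additive. The most delicate point is proving that $\mathcal{A}\cap\mathcal{B}$ is enumerated exactly by $(z+1)\,h^{*}((\bw^{*})';z)\,h^{*}(\bq';z)$: that the shared square is the \emph{only} overlap and that no further boundary correction is needed. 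I expect to establish this by an induction paralleling the lattice-path arguments of Theorems~\ref{thm: h*_snake} and~\ref{thm:h*_ladder}, conditioning a chain on the first point at which it leaves the central diagonal of the tail snake $\bq$, which is precisely where membership in $\mathcal{A}$ versus $\mathcal{B}$ is decided.
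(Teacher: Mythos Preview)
Your approach is essentially the paper's: pass to maximal chains via Theorem~\ref{thm: bijection} and run inclusion--exclusion at the junction, with the $(z+1)$ factor arising from the two traversals of the shared square. Where you stay vague, the paper is concrete: it names two specific elements of $J(P(\bw))$, namely $\langle 2k+1,2k+2\rangle$ and $\langle 2k-1,2k\rangle$ (two consecutive diagonal points where the ribbon is narrow), and observes directly that every maximal chain contains at least one of them. Your classes $\mathcal{A}$ and $\mathcal{B}$ are exactly $\mathcal{M}^{\langle 2k+1,2k+2\rangle}$ and $\mathcal{M}^{\langle 2k-1,2k\rangle}$; cutting a chain at either point yields a pair of maximal chains in the relevant sub-ribbons, and the red-L-turn count is additive across the cut because each red L turn lives in a single square. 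Chains in $\mathcal{A}\cap\mathcal{B}$ contain both points, hence factor as a chain above $\langle 2k-1,2k\rangle$, one of the two routes through the single square between the two points, and a chain below $\langle 2k+1,2k+2\rangle$, giving the $(z+1)\,h^{*}((\bw^{*})';z)\,h^{*}(\bq';z)$ term on the nose.

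Once you name these two cut-points, your entire third paragraph becomes unnecessary: there is no induction and no conditioning on where the chain leaves a diagonal. The ``hard part'' you anticipate dissolves into the single observation that a maximal chain cannot avoid both consecutive diagonal points at a narrow neck of $J(P(\bw))$. So the plan is right, but replace the proposed induction with the explicit identification of the two pivot elements and the proof becomes a few lines.
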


\begin{proof}
Let $\bw = w_0w_1 \dots w_n$ with $w_0 = \epsilon$ and take $k$ to be the maximal index such that $w_k \neq w_n$.
Denote the set of maximal chains of $J(P(\bw))$ that contain $i$ red L turns by $\mathcal{M}_i(\bw)$ (recall that these chains correspond to linear extensions of $P(\bw)$ with $i$ ascents).

Denote the set of maximal chains of $\mathcal{M}_i(\bw)$ that include elements of $J(P(\bw))$ that are labeled with $\langle a \rangle$ or $\langle a, b \rangle$, by $\mathcal{M}_i^{\langle a\rangle}(\bw)$ and $\mathcal{M}_i^{\langle a, b\rangle}(\bw)$, respectively. 
Denote the set of maximal chains of $\mathcal{M}_i(\bw)$ that includes elements of $J(P(\bw))$ that are less than or equal to (greater than equal to) the element labeled with $\langle a \rangle$ as $\mathcal{M}_i^{\langle a\rangle\downarrow}(\bw)$ ($\mathcal{M}_i^{\langle a\rangle\uparrow}(\bw)$).
Use an analogous notation for the case where we take the element labeled with $\langle a, b \rangle$ instead of $\langle a \rangle$.
    Then, for $k \geq 1$, we have the following by inclusion-exclusion,
    \[
        \mathcal{M}_i(\bw) = \mathcal{M}_i^{\langle 2k+1, 2k+2 \rangle}(\bw) \cup \mathcal{M}_i^{\langle 2k, 2k-1 \rangle}(\bw) - \left(\mathcal{M}_i^{\langle 2k+1, 2k+2 \rangle}(\bw) \cap \mathcal{M}_i^{\langle 2k, 2k-1 \rangle}(\bw)\right).
    \]
    Therefore, 
    \begin{align*}
        |\mathcal{M}_i(\bw)| &=  \sum_{j=0}^i |\mathcal{M}_j^{\langle 2k+1, 2k+2 \rangle\downarrow}(\bw)||\mathcal{M}_{i-j}^{\langle 2k+1, 2k+2 \rangle\uparrow}(\bw)| + \sum_{j=0}^i |\mathcal{M}_j^{\langle 2k, 2k-1 \rangle\downarrow}(\bw)||\mathcal{M}_{i-j}^{\langle 2k, 2k-1 \rangle\uparrow}(\bw)|\\
        &- \sum_{j=0}^i |\mathcal{M}_j^{\langle 2k+1, 2k+2 \rangle\downarrow}(\bw)||\mathcal{M}_{i-j}^{\langle 2k, 2k-1 \rangle\uparrow}(\bw)| - \sum_{j=0}^{i-1} |\mathcal{M}_j^{\langle 2k+1, 2k+2 \rangle\downarrow}(\bw)||\mathcal{M}_{i-j-1}^{\langle 2k, 2k-1 \rangle\uparrow}(\bw)|\\
        &= \sum_{j=0}^i |\mathcal{M}_j(\bw[k:n])||\mathcal{M}_{i-j}(\bw[0:k-1])| + \sum_{j=0}^i |\mathcal{M}_j(\bw[k+1:n])||\mathcal{M}_{i-j}(\bw[0:k])|\\
        &- \sum_{j=0}^i |\mathcal{M}_j(\bw[k+1:n])||\mathcal{M}_{i-j}(\bw[0:k])| - \sum_{j=0}^{i-1} |\mathcal{M}_j(\bw[k+1:n])||\mathcal{M}_{i-j-1}(\bw[0:k])|,
    \end{align*}
where the first equality follows from the construction of red L turns.
Combining Lemma \ref{lemma: Eulerian polynomial} and Lemma \ref{lemma: red L turn}, we have that $[z^i]h^*(\bw) = |\mathcal{D}_i(\bw)|  =|\mathcal{M}_i(\bw)|$. Thus, we deduce the following:
    \begin{align*}
        h^{*}(\bw;z) = h^{*}(\bw[0:k-1];z) \cdot h^{*}(\bw[k:n];z) &+ h^{*}(\bw[0:k];z) \cdot h^{*}(\bw[k+1:n];z)\\
        &- (z+1) \cdot h^{*}(\bw[0:k-1];z) \cdot h^{*}(\bw[k+1:n];z).
    \end{align*}
Since $k$ is the maximal index such that $w_k \neq w_n$, we further obtain
\[
h^{*}(\bw;z) = h^{*}((\bw^{*})';z) \cdot h^{*}(\bq;z) + h^{*}(\bw^{*};z) \cdot h^{*}(\bq';z) - (z+1) \cdot h^{*}((\bw^{*})';z) \cdot h^{*}(\bq';z)
\]
\end{proof}

The following example illustrates the use of the recurrence formula to compute the $h^*$-polynomial for any generalized snake word $\bw$.

\begin{example}
Consider $\bw = \epsilon RRL$ and note that $\bw^* = \epsilon RR, \bq = \epsilon L$.
Then
    \[
        h^*(\bw, z) = h^*(\epsilon R, z) h^*(\epsilon L, z) + h^*(\epsilon RR, z) h^*(\epsilon, z) - (z + 1) h^*(\epsilon R, z) h^*(\epsilon, z).
    \]
Hence, the computation is reduced to the $h^*$-polynomial of the ladder, which we already know from Example \ref{example: using section 3 formula}/Example \ref{ex:narayana}:
\[h^*(\epsilon R, z) = h^*(\epsilon L, z) = z^2 + 3z + 1, \; h^*(\epsilon RR, z) = z^3 + 6 z^2 + 6 z + 1.\]
Therefore, the $h^*$-recurrence formula gives that:
\begin{align*}
    h^*(\bw, z)
    &= h^*(\epsilon R, z) h^*(\epsilon L, z) + h^*(\epsilon RR, z) h^*(\epsilon, z) - (z + 1) h^*(\epsilon R, z) h^*(\epsilon, z) \\
    &= (z^2 + 3z + 1)^2 + (z + 1) \cdot (z^3 + 6 z^2 + 6 z + 1) - (z + 1)^2 \cdot (z^2 + 3z + 1)\\
    &= z^4 + 8 z^3 + 15 z^2 + 8 z + 1,
\end{align*}
which aligns with our computation using Sage \cite{Sage} and is not accounted for by either Theorem \ref{thm: h*_snake} or Theorem \ref{thm:h*_ladder}.
\end{example}
\begin{remark}
By substituting $z=1$ to Theorem \ref{thm: recurrence for h*} and using Theorem \ref{thm:h*_ladder}, we recover the recurrence relation of volumes in Theorem \ref{thm:O(P(w))Volume} \cite[Theorem 3.6]{vonBell+}.
\end{remark}

\subsection{Swap operation and monotonicity result}
Consider the following $h^*$-polynomials:
\[
h^*(\epsilon RRR, z) = z^4 + 10 z^3 + 20 z^2 + 10 z + 1,\]
\[
h^*(\epsilon RRL, z) = z^4 + 8 z^3 + 15 z^2 + 8 z + 1, \text{ and }
\]
\[ 
h^*(\epsilon RLR, z) = z^4 + 7 z^3 + 13 z^2 + 7 z + 1.
\]
Observe the coefficient-wise inequality:
\[
[z^k] h^*(\epsilon RLR, z) \leq [z^k] h^*(\epsilon RRL, z) \leq [z^k] h^*(\epsilon RRR, z).
\] 
Our goal in this section is to generalize this inequality using the \textit{swap operator}.
We prove a coefficient-wise monotonicity result in Theorem \ref{thm: coef mono} using the swap operator. 
Our result is strengthening of Theorem \ref{thm:minmaxvolumes} \cite[Theorem 3.10]{vonBell+}. 
In fact, we can retrieve Theorem \ref{thm:minmaxvolumes} by summing the coefficient inequality from $k = 0$ to $\deg(h^*)$.

\begin{definition}
Define the \defterm{swap operation} as
    \[
        \varphi_i: \left\{\text{generalized snake words of length } n\right\} \longrightarrow \left\{\text{generalized snake words of length } n\right\}, 
    \]
    where $\varphi_i(\bw)$ is the generalized snake word obtained from $\bw$ by swapping all letters with indices greater than or equal to $i$ from $R$ to $L$ and $L$ to $R$. 
    The word $\bw$ is indexed from left to right in increasing order with the empty word $\epsilon$ having index $1$.
\end{definition}

\begin{example}
Applying the swap operation to $\bw = \epsilon RRLLLRRRRLL$ with $i=7$, we obtain that $\varphi_7(\bw) = \epsilon RRLLLLLLLRR$.

\end{example}
\begin{theorem} \label{thm: coef mono}
For any generalized snake word $\bw$ of length $n>0$ and for all $k \geq 0$, the following inequality holds:
\[[z^k]h^*(\varphi_i(\bw); z) \leq [z^k]h^*(\bw; z),\]
whenever $w_{i-1} = w_i$ or $i = 1$. 
Equality occurs only when $i = 1$.
\end{theorem}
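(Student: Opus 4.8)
The plan is to reduce the statement to a single, local modification of one junction, and then to run the comparison inside the maximal-chain model of Construction~\ref{construction:red_L_turns} and Theorem~\ref{thm: bijection}. First I would observe that $h^*(\bw;z)$ depends only on the sequence of \emph{shape relations} recording, for each consecutive pair of letters, whether they are equal (a ``straight'' junction) or unequal (a ``bent'' junction): this data determines $\bw$ up to the global letter-swap $\bw\mapsto\bar\bw$, which preserves the $h^*$-polynomial by Remark~\ref{endswithR}. Now $\varphi_i$ flips every letter at position $\geq i$, so each consecutive relation among the flipped letters is preserved, and the \emph{only} relation that changes is the one straddling the cut, between the fixed letter at position $i-1$ and the flipped letter at position $i$. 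The hypothesis $w_{i-1}=w_i$ says this junction was straight, so it becomes bent; when $i=1$ the map is the global swap $\bw\mapsto\bar\bw$ and nothing changes in shape, which is why $i=1$ is exactly the equality case. Thus it suffices to prove that converting one straight junction into a bent junction, with all other relations fixed, weakly decreases every coefficient of $h^*$ and strictly decreases their sum.

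By Lemma~\ref{lemma: Eulerian polynomial} together with Theorem~\ref{thm: bijection}, $[z^k]h^*(\bw;z)=|\mathcal{M}_k(\bw)|$, the number of maximal chains of $J(P(\bw))$ carrying exactly $k$ red $L$-turns, so the goal becomes an inequality $|\mathcal{M}_k(\varphi_i\bw)|\leq|\mathcal{M}_k(\bw)|$ for all $k$. I would fix the broken junction and decompose each maximal chain, at the corresponding cut of the embedded Hasse diagram, into a \emph{lower part} lying in $J(P)$ of the common prefix and an \emph{upper part} lying in $J(P)$ of the suffix. The suffix poset of the bent word is the mirror image of the suffix poset of the straight word, and the global $L\leftrightarrow R$ symmetry of Construction~\ref{embedding} carries maximal chains to maximal chains while preserving the number of red $L$-turns; this matches the upper parts bijectively. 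The comparison then localizes to the \emph{crossing} of the junction itself: in the straight case a chain traverses a one-step-longer horizontal (or vertical) arm, whereas in the bent case it must round a single corner.

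To finish, I would package these crossing contributions into two gluing identities of the same inclusion–exclusion shape as in the proof of Theorem~\ref{thm: recurrence for h*}, writing $h^*_{\mathrm{straight}}$ and $h^*_{\mathrm{bent}}$ as explicit bilinear expressions in the $h^*$-polynomials of the prefix and suffix sub-snakes, and then show that $h^*_{\mathrm{straight}}-h^*_{\mathrm{bent}}$ has nonnegative coefficients by induction on $n$. The base cases are precisely the comparisons $h^*(\epsilon RLR;z),\,h^*(\epsilon RRL;z),\,h^*(\epsilon RRR;z)$ recorded before the theorem, and the inductive step uses that every sub-snake $h^*$-polynomial has nonnegative coefficients, so the relevant products and combinations inherit the correct sign. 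The main obstacle is exactly controlling the sign of this difference, since the bilinear expression is not manifestly nonnegative; the heart of the argument is an explicit injection $\mathcal{M}_k(\varphi_i\bw)\hookrightarrow\mathcal{M}_k(\bw)$ sending a chain that rounds the bent corner to a chain running along the longer straight arm with the same red-$L$-turn count, the point being that the chains of the straight word using the extra lattice point of the longer arm are exactly those outside the image.

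Finally, summing the coefficient inequality over $k$ recovers Theorem~\ref{thm:minmaxvolumes}. For strictness, evaluating the difference at $z=1$ gives the gap in numbers of linear extensions between the straight and bent words; adding a genuine bend strictly reduces this count for every $n>0$, so at least one coefficient strictly drops whenever $i>1$. Combined with the $i=1$ analysis above, this yields the claimed characterization of equality.
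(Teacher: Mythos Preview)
Your high-level strategy is the same as the paper's: interpret $[z^k]h^*$ as $|\mathcal{M}_k|$ via Theorem~\ref{thm: bijection} and build a red-$L$-turn-preserving injection $\mathcal{M}(\varphi_i\bw)\hookrightarrow\mathcal{M}(\bw)$. Your reduction to ``shape relations'' and the handling of the $i=1$ case are clean and correct, and your observation that the suffix posets are mirror images (so a reflection matches the upper parts) is exactly what the paper exploits.

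The gap is that you never actually construct the injection. You oscillate between two plans---a bilinear inclusion--exclusion identity proved by induction, and an explicit injection---and then concede that the bilinear difference ``is not manifestly nonnegative,'' which is precisely the point: that route does not close without the injection. Your one-sentence description of the injection (``a chain that rounds the bent corner maps to a chain running along the longer straight arm'') is not enough. In the paper's proof the map is \emph{not} a single uniform reflection: one fixes four distinguished elements $A,B,C,D$ in $J(P(\varphi_i\bw))$ and $A',B',C',D'$ in $J(P(\bw))$ straddling the modified junction, and then treats two regimes separately. Chains that avoid the specific detour paths $\mathcal{C}_{XY}$ (for $X\in\{A,B\}$, $Y\in\{C,D\}$) are handled by reflecting only the portion below $\langle 2i-3,2i-2\rangle$; chains that contain some $\mathcal{C}_{XY}$ are sent to chains containing the corresponding $\mathcal{C}_{X'Y'}$, with the reflection now applied below $A$ or $B$ depending on the case. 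The case split is what guarantees both injectivity and the preservation of the red-$L$-turn count across the junction, and it is exactly the content your proposal is missing. Your strictness argument (evaluate at $z=1$; the extra lattice points in the straight arm produce chains outside the image) is fine once the injection is in hand.
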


\begin{figure}[!ht]
    \centering
    \scalebox{1}{\input{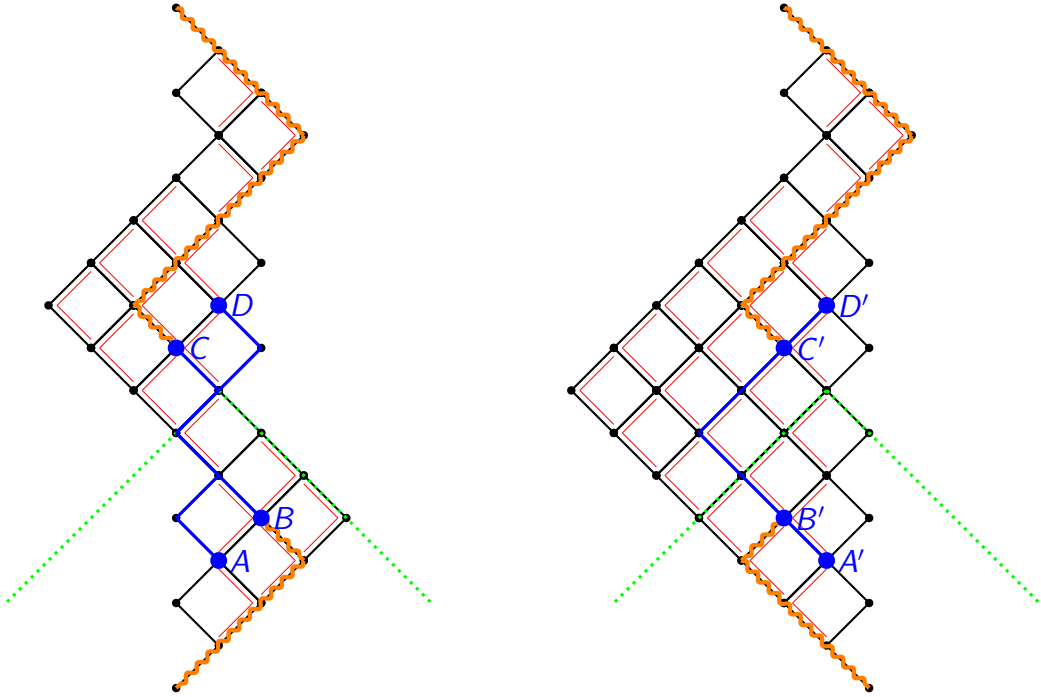}
}
    \caption{Injective map $\phi_5$ mapping $\epsilon RLLLRR$ to $\epsilon RLLLLL$.}
    \label{fig:injective_map}
\end{figure}

\begin{proof}
Consider a generalized snake word $\bw$ of length $n>0$ and let $i \neq 1$ and $w_{i-1} = w_i$.
Similar as before, denote the set of maximal chains of $J(P(\bw))$ that corresponds to the linear extensions of $P(\bw)$ with $k$ ascents by $\mathcal{M}_k(\bw)$.
The maximal chains of $J(P(\bw))$ can be partitioned as 
\[ \mathcal{M}(\bw) = \mathcal{M}_0(\bw) \cup \mathcal{M}_1(\bw) \cup \dots \cup \mathcal{M}_{n+1}(\bw).\]
In what follows, we construct an injective map \[\phi_i:\mathcal{M}(\varphi_i(\bw)) \longrightarrow \mathcal{M}(\bw),\] such that $\mathcal{M}_k(\varphi_i(\bw)) \subseteq \phi_i^{-1}(\mathcal{M}_k(\bw))$ for all $k \geq 0$.
    
Without loss of generality, assume that $w_i = w_{i-1} = L$.
Let \[A = \langle2i+1, 2i+2 \rangle, \;B = \langle2i-1, 2i+2\rangle,\; C = \langle2i-5, 2i-2\rangle,\; D = \langle2i-5, 2i-4 \rangle\] be elements of $J(P(\varphi_i(\bw))$ and \[A' = \langle 2i+1, 2i+2 \rangle, \; B' = \langle 2i-1, 2i+2 \rangle, \; C' = \langle 2i-5, 2i-2 \rangle, \; D' = \langle 2i-5, 2i-4 \rangle\] be elements of $J(P(\bw))$.

Take $\calC_1 = \{ \langle2i-1, 2i\rangle \langle2i-3, 2i\rangle \langle2i-3, 2i-2\rangle \}$.
Then consider the following chains in $J(P(\varphi_i(\bw))$:
    \begin{align*}
        \calC_{AC} &= \{A, \langle2i+1\rangle \} \cup \calC_1 \cup \{C\},\\
        \calC_{AD} &= \{A, \langle2i+1\rangle \} \cup \calC_1 \cup \{\ \langle2i-3\rangle, D\},\\
        \calC_{BC} &= \{B\} \cup \calC_1 \cup \{C\}, \text{ and }\\
        \calC_{BD} &= \{B\} \cup \calC_1 \cup \{\langle2i-3\rangle, D\}.
    \end{align*}

Next, take $\calC_2 = \{ \langle2i-5, 2i\rangle \langle2i-5, 2i+2\rangle \langle2i-3, 2i+2\rangle \}$ and consider the following chains in $J(P(\bw))$:
    \begin{align*}
        \calC_{A'C'} &= \{A', B'\} \cup \calC_2 \cup \{C'\},\\
        \calC_{A'D'} &= \{A', B'\} \cup \calC_2 \cup \{C', D'\},\\
        \calC_{BC} &= \{B'\} \cup \calC_2 \cup \{C'\}, \text{ and }\\
        \calC_{BD} &= \{B'\} \cup \calC_2 \cup \{C', D'\}.
    \end{align*}

Now, consider the maximal chains of $J(P(\varphi_i(\bw))$ that do not include $\calC_{XY}$ for $X \in \{A, B\}$ and $Y \in \{C, D\}$.
Then, these chains in $J(P(\varphi_i(\bw))$ get mapped to chains in $J(P(\bw))$ by reflecting the portion of the chain in $J(P(\varphi_i(\bw))$ consisting of element less than or equal to $\langle2i-3, 2i-2\rangle$ and preserving the remaining portion. 

Next, consider the paths that include $\calC_{XY}$ for $(X, Y) \in \{(A, C), (A, D), (B, C), (B, D)\}$.
Then, these chains in $J(P(\varphi_i(\bw))$ get mapped to chains in $J(P(\bw))$ by reflecting the portion of the chain in $J(P(\varphi_i(\bw))$ consisting of element less than or equal to $A, A, B, B$, respectively, mapping the path $\calC_{XY}$ in $J(P(\varphi_i(\bw))$ to the path $\calC_{X'Y'}$ in $J(P(\bw))$, respectively, and preserving the remaining portion.
[\emph{For example, in Figure \ref{fig:injective_map}, we can check that the path in the left Hasse diagram consisting of two paths colored in orange and a blue connecting the two orange paths from point $B$ to $C$, is mapped to the path in the right Hasse diagram consisting of two paths colored in orange and a blue path connecting the orange paths from point $B'$ to $C'$. 
Also, notice that the paths in the left and right diagram pass the same number of red $L$-turns.}]

Note that the map $\phi_i$ is an injective map that preserves the number of red $L$-turns that each maximal chain contains, i.e., \[\mathcal{M}_k(\varphi_i(\bw)) \subseteq \phi_i^{-1}(\mathcal{M}_k(\bw)) \text{ for all } k \geq 0.\]

As $|\mathcal{M}_k(\bw)| = [z^k]h^*(\bw;z)$ for all $k \geq 0$, we have that $[z^k]h^*(\varphi_i(\bw); z) \leq [z^k]h^*(\bw; z)$ for all $k \geq 0$.
\end{proof}

Specifically, we can bound the coefficients of the $h^*$-polynomial of any generalized snake word by that of the $h^*$-polynomial of the ladder and that of the $h^*$-polynomial of the regular snake.
\begin{corollary}\label{cor 4.20}
    For any generalized snake word $\bw$ of length $n>0$. Let $\mathbf{s}$, $\mathbf{l}$ be the ladder and the regular snake word of the same length, respectively. Then for all $k \geq 0$, the following inequalities hold:
$$
[t^k]h^*(\mathbf{s}; t) \leq [t^k]h^*(\mathbf{w}; t) \leq [t^k]h^*(\mathbf{l}; t)
$$
\end{corollary}

\begin{proof}
    For any $\bw$, We can obtain $\bw$ by applying a series of swap operations to $\bl$. Similarly we can obtain $\bs$ by applying a series of swap operations to $\bw$. Therefore, by applying Theorem \ref{thm: coef mono}, we can derive the corollary above.
\end{proof}

\begin{remark}
We note that Theorem \ref{thm: coef mono} and Corollary \ref{cor 4.20} resemble Stanley's monotonicity theorem \cite{Stanley_monotonicity}, which states that if a polytope $Q$ contains polytope $P$, then the $h^*$-vector of $Q$ is component-wise greater than or equal to the $h^*$-vector of $P$.
While the results presented above bear resemblance to one another, they do not follow from Stanley's result. 
In particular, the proof of Theorem \ref{thm: coef mono} builds an injective map that is defined using the maximal chains of $J(P)$, where a snake word of length $n$ gets mapped to another word of the same length, which would not fall under the purview of Stanley's monotonicity theorem since the order polytopes arising from each of the generalized snake words would be of the same dimension and would not be completely contained in one another.
\end{remark}

\section{Future directions}\label{sec:conclusion}
To conclude, we present some conjectures for further investigation based on our computations. 

\begin{conjecture} \label{conj 1}  Let $\bw$ be a generalized snake word of length $n + 1$,
\begin{enumerate}
    \item All roots of $h^*(\bw; z)$ are real and negative. 
    \item All roots of $L(\bw; t)$ are contained in the disk $|z - \frac{n + 4}{2}| \leq \frac{n + 2}{2}, z \in \mathbb{C}$ with axis of symmetry $x = \frac{- n -4}{2}$.
\end{enumerate}    
\end{conjecture}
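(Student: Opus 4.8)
The plan is to recast the recurrence of Theorem~\ref{thm: recurrence for h*} into a two-term form amenable to an interlacing induction, and then to combine real-rootedness of $h^*$ with the root data of Theorem~\ref{thm:roots} to locate the Ehrhart roots. First, for part (1), I would rewrite the recurrence. Write $R_m(z):=h^*(S_m;z)$ for the $h^*$-polynomial of the regular snake of word-length $m$, so that $R_{-1}=1$, $R_0=1+z$, and, by Theorem~\ref{thm: h*_snake} and the Delannoy recurrence, $R_m=(1+z)R_{m-1}+zR_{m-2}$. Substituting $R_\ell=(1+z)R_{\ell-1}+zR_{\ell-2}$ for $h^*(\bq)$ into Theorem~\ref{thm: recurrence for h*} collapses the three products into
\[
h^*(\bw;z)=h^*(\bw^{*};z)\,R_{\ell-1}(z)+z\,h^*((\bw^{*})';z)\,R_{\ell-2}(z),
\]
where $\ell$ is the word-length of the right-most maximal sub-snake $\bq$. (One checks this reproduces, e.g., $h^*(\epsilon RRL;z)=z^4+8z^3+15z^2+8z+1$.) This clean form is the engine of the induction.

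Second, the base cases. The polynomials $R_m$ satisfy a genuine three-term recurrence with the $R_{m-2}$ term carrying a factor $z$, which is negative on the relevant locus; the classical argument—evaluate $R_m$ at the negative simple roots $\rho$ of $R_{m-1}$, where $R_m(\rho)=\rho\,R_{m-2}(\rho)$ changes sign by the inductive interlacing—shows via the intermediate value theorem that $R_{m-1}\preceq R_m$ and all $R_m$ are real-rooted with negative roots. The ladder case is governed by the Narayana polynomials (Theorem~\ref{thm:h*_ladder}), whose real-rootedness is classical. Thus both extremal families, which serve as the base cases of Theorem~\ref{thm: recurrence for h*}, are real-rooted and internally interlacing.

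Third, the inductive step. I would carry, as an inductive hypothesis over word-length, that $h^*(\bu)$ is real-rooted with negative roots and that appending a single letter interlaces, i.e.\ $h^*(\bu)\preceq h^*(\bu x)$. Granting this for the shorter prefixes $(\bw^{*})'$ and $\bw^{*}$, products of polynomials with only negative roots preserve interlacing, giving $h^*((\bw^{*})')\,R_{\ell-2}\preceq h^*(\bw^{*})\,R_{\ell-1}$; since $z$ has its only root at $0$, to the right of all negative roots, $z\,h^*((\bw^{*})')\,R_{\ell-2}$ still interlaces $h^*(\bw^{*})\,R_{\ell-1}$, and a sum of two real-rooted polynomials one of which interlaces the other is real-rooted, yielding real-rootedness of $h^*(\bw)$. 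The main obstacle is re-establishing the prefix-interlacing $h^*(\bw)\preceq h^*(\bw x)$ needed to close the induction: appending a letter changes which suffix is the right-most maximal sub-snake, and hence changes the pair $(\bw^{*},(\bw^{*})')$ feeding the recurrence. I expect to need a secondary induction on the block (maximal-run) decomposition of $\bw$, propagating a fixed interlacing invariant through each block extension.

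Finally, for part (2), I would first factor
\[
L(\bw;t)=\kappa\,(t+1)(t+2)\cdots(t+n+3)\,Q(t),\qquad \deg Q=n+1,
\]
using Theorem~\ref{thm:roots}(\ref{thm:roots_item1}); the roots $-1,\dots,-(n+3)$ are precisely the real diameter of the target disk and so lie in it. By Theorem~\ref{thm:roots}(\ref{thm:roots_item2}) the cofactor $Q$ is symmetric about the center $-\tfrac{n+4}{2}$, so it remains to bound $\lvert t_0+\tfrac{n+4}{2}\rvert\le\tfrac{n+2}{2}$ for each root $t_0$ of $Q$. Complex roots genuinely occur—for $\epsilon LR$ they are $-3\pm i/\sqrt{3}$, comfortably inside the radius-$2$ disk—so a purely real argument cannot suffice. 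I would try to control $Q$ either by transporting the (conjecturally real) roots of $h^*(\bw;z)$ through the identity $\sum_{t\ge0}L(\bw;t)z^t=h^*(\bw;z)/(1-z)^{2n+5}$ in the spirit of Rodriguez-Villegas, or by a direct argument-principle estimate on the circle $\lvert t+\tfrac{n+4}{2}\rvert=\tfrac{n+2}{2}$, using the coefficient-wise sandwich of Theorem~\ref{thm: coef mono} between the all-real-rooted ladder Ehrhart polynomial (Corollary~\ref{cor: ehrhart ladder}) and the regular snake. The hardest point is that root locations are not monotone in the coefficients, so the sandwich of Theorem~\ref{thm: coef mono} does not transfer directly to root containment; bridging that gap with a genuine complex-analytic bound is where the real work lies.
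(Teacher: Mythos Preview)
This statement is Conjecture~\ref{conj 1} in the paper: the authors do \emph{not} prove it, they only report computational verification for snake words of length at most~$9$. So there is no ``paper's own proof'' to compare against, and any complete argument you produce would be new.

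Your rewriting of the recurrence of Theorem~\ref{thm: recurrence for h*} into the two-term form
\[
h^*(\bw;z)=h^*(\bw^{*};z)\,R_{\ell-1}(z)+z\,h^*((\bw^{*})';z)\,R_{\ell-2}(z)
\]
is correct and is exactly the kind of simplification one would want for an interlacing induction. The base cases (Delannoy/regular snake via the three-term recurrence, and Narayana/ladder) are indeed classically real-rooted.

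However, the inductive step has a genuine gap beyond the one you flag. You assert that ``products of polynomials with only negative roots preserve interlacing,'' and use this to pass from $h^*((\bw^{*})')\preceq h^*(\bw^{*})$ and $R_{\ell-2}\preceq R_{\ell-1}$ to $h^*((\bw^{*})')\,R_{\ell-2}\preceq h^*(\bw^{*})\,R_{\ell-1}$. This implication is \emph{false} in general: interlacing is preserved under multiplication of both sides by the \emph{same} real-rooted polynomial, but not under multiplying the two sides by different (even interlacing) factors. So even before the issue you acknowledge---re-establishing $h^*(\bw)\preceq h^*(\bw x)$ to close the loop---the step producing real-rootedness of $h^*(\bw)$ from the two-term form is not justified. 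A workable route would more likely go through showing that the pair $\bigl(h^*(\bw^{*})R_{\ell-1},\,z\,h^*((\bw^{*})')R_{\ell-2}\bigr)$ has a common interleaver, or through a multivariate stability argument \`a la Borcea--Br\"and\'en, rather than a bare product-of-interlacings claim.

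For part~(2) you are candid that you only have heuristics. Neither the Rodriguez-Villegas transfer nor the coefficient sandwich of Theorem~\ref{thm: coef mono} gives root containment without substantial additional input, as you note. So as it stands, part~(2) remains entirely open in your proposal as well as in the paper.
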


\noindent We have verified \ref{conj 1} for snake words of length up to $9$. 
\bigskip

We also observed \textit{Ehrhart positivity} empirically.
\begin{conjecture}\label{conj 2}
The order polytope  $\calO(P(\bw))$ is Ehrhart positive, i.e., $L(\bw; t)$ has nonnegative coefficients.
\end{conjecture}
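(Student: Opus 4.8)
The plan is to reduce Ehrhart positivity to a statement about the location of the complex roots of $L(\bw;t)$, and then recover nonnegativity of the coefficients from a real factorization. The elementary fact driving this is: if a real polynomial has positive leading coefficient and every complex root has negative real part, then all of its coefficients are (strictly) positive. Indeed, such a polynomial factors over $\R$ into linear factors $(t+a)$ with $a>0$ and irreducible quadratics $(t-\rho)(t-\bar\rho) = t^2 - 2\Re(\rho)t + |\rho|^2$ with $-2\Re(\rho)>0$ and $|\rho|^2>0$, and a product of polynomials with nonnegative coefficients again has nonnegative coefficients. Since the leading coefficient of $L(\bw;t)$ is the normalized volume divided by $(2n+4)!$, hence positive, it suffices to show every root of $L(\bw;t)$ lies in the open left half-plane $\{\Re z<0\}$. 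This is a consequence of Conjecture \ref{conj 1}(\romannumeral2): a disk of radius $\tfrac{n+2}{2}$ centered on the axis of symmetry $x=-\tfrac{n+4}{2}$ lies in $\{\Re z \le -\tfrac{n+4}{2}+\tfrac{n+2}{2}=-1\}$, so proving that root-location statement immediately yields Conjecture \ref{conj 2}.

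Thus the main content is the root-location, which I would organize as an induction on the word length via the recurrence of Theorem \ref{thm: recurrence for h*}, with the ladder and the regular snake as base cases. For the ladder, Corollary \ref{cor: ehrhart ladder} already gives the complete factorization $L(\bw;t)=\tfrac{1}{(n+2)!(n+3)!}(t+1)(t+n+3)\prod_{j=2}^{n+2}(t+j)^2$ into linear factors with positive roots, so all roots are real and negative and positivity is immediate. For the regular snake, the Delannoy $h^*$-polynomial of Theorem \ref{thm: h*_snake} can be used to exhibit the corresponding factorization; the word $\epsilon LR$, whose nontrivial quadratic factor $t^2+6t+\tfrac{28}{3}$ has roots $-3\pm i/\sqrt{3}$ with negative real part, is the model case.

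A second, a priori weaker, route avoids locating the complex roots. By Theorem \ref{thm:roots}(\ref{thm:roots_item1}) we may write $L(\bw;t)=Q_\bw(t)\cdot\prod_{j=1}^{n+3}(t+j)$, where $Q_\bw$ has degree $(2n+4)-(n+3)=n+1$. As $\prod_{j=1}^{n+3}(t+j)$ already has nonnegative coefficients, Ehrhart positivity follows once $Q_\bw$ has nonnegative coefficients. Moreover, since $L(\bw;t)$ is symmetric about $-\tfrac{n+4}{2}$ by Theorem \ref{thm:roots}(\ref{thm:roots_item2}) and the factor $\prod_{j=1}^{n+3}(t+j)$ shares this center (its roots $-1,\dots,-(n+3)$ pair up around $-\tfrac{n+4}{2}$), the cofactor $Q_\bw$ is itself symmetric about $-\tfrac{n+4}{2}$. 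The aim would be to derive a recurrence for $Q_\bw$ from the $h^*$-recurrence of Theorem \ref{thm: recurrence for h*} and prove coefficientwise nonnegativity inductively, using the coefficientwise monotonicity of Theorem \ref{thm: coef mono} to sandwich a general snake between the ladder and the regular snake.

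The main obstacle in either route is that neither root-location nor coefficient-positivity is preserved in an obvious way by the operations in Theorem \ref{thm: recurrence for h*}: the recurrence is the signed combination $h^{*}(\bw;z)=h^{*}((\bw^{*})';z)\,h^{*}(\bq;z)+h^{*}(\bw^{*};z)\,h^{*}(\bq';z)-(z+1)\,h^{*}((\bw^{*})';z)\,h^{*}(\bq';z)$, and the subtracted term obstructs a direct induction on the roots of the associated Ehrhart polynomials or on the coefficients of the cofactors $Q_\bw$. Controlling this cancellation—for instance by rewriting the recurrence as a manifestly nonnegative combination, or by isolating a Hurwitz-type (negative-real-part) property closed under the relevant products—is where the difficulty concentrates, and is precisely why Conjecture \ref{conj 2}, like Conjecture \ref{conj 1}, currently rests on computational evidence rather than proof.
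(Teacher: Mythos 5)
You should know at the outset that the statement you are addressing is not a theorem of the paper: it appears as Conjecture~\ref{conj 2}, supported only by the authors' computations, so there is no proof in the paper to compare against, and your proposal --- as you yourself concede in your closing paragraph --- is not a proof either. The reductions you do carry out are correct and worth recording: the Hurwitz-type fact that a real polynomial with positive leading coefficient and all roots in the open left half-plane has positive coefficients; the observation that the disk in Conjecture~\ref{conj 1}(2), centered at $-\tfrac{n+4}{2}$ with radius $\tfrac{n+2}{2}$, lies in $\{\Re z \le -1\}$, so that Conjecture~\ref{conj 1}(2) implies Conjecture~\ref{conj 2}; the factorization $L(\bw;t)=Q_\bw(t)\prod_{j=1}^{n+3}(t+j)$ from Theorem~\ref{thm:roots}(\ref{thm:roots_item1}) together with the symmetry of $Q_\bw$ inherited via Theorem~\ref{thm:roots}(\ref{thm:roots_item2}); and the ladder case, which is genuinely settled by the explicit factorization in Corollary~\ref{cor: ehrhart ladder}.

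The gaps are concentrated where you locate them, but two are worse than you suggest. First, your primary route is circular at the level of the paper: it reduces the open Conjecture~\ref{conj 2} to the equally open Conjecture~\ref{conj 1}(2), verified only computationally up to length $9$. Second, even your base cases are incomplete: Theorem~\ref{thm: h*_snake} gives the $h^*$-polynomial of the regular snake via Delannoy numbers, but an $h^*$-polynomial does not yield root locations of the Ehrhart polynomial, and no analogue of Corollary~\ref{cor: ehrhart ladder} is available for the snake --- the quadratic factor $t^2+6t+\tfrac{28}{3}$ for $\epsilon LR$ is a single computed instance, not a pattern you have established. Third, your suggestion to use Theorem~\ref{thm: coef mono} to ``sandwich'' a general word fails structurally: that theorem bounds $h^*$-coefficients, and the linear map sending the $h^*$-vector to Ehrhart coefficients, through the basis $\binom{t+d-i}{d}$, has mixed signs for $i\ge 1$, so coefficientwise $h^*$-domination transfers neither to Ehrhart positivity nor to Ehrhart-coefficient domination; this is precisely why the paper must state Conjecture~\ref{conj 3} separately rather than derive it from Theorem~\ref{thm: coef mono}. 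Finally, the signed term $-(z+1)\,h^{*}((\bw^{*})';z)\,h^{*}(\bq';z)$ in Theorem~\ref{thm: recurrence for h*} blocks any naive induction, as you correctly identify. In sum, your text is a reasonable map of possible attack routes and accurately reflects why the statement remains conjectural, but it should not be presented as a proof attempt that is merely missing details: each route terminates in an open problem.
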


We have also observed a similar coefficient-wise inequality for the Ehrhart polynomial under the swap operator, parallel to Theorem \ref{thm: coef mono}.
\begin{conjecture}\label{conj 3}
    For  $k > 0$, the following coefficient-wise inequality holds:
\[
    [t^k] L(\varphi_i(\bw); t) \leq [t^k] L(\bw, t).
\]
\end{conjecture}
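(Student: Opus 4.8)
The plan is to derive the Ehrhart inequality from the already-proven $h^{*}$-inequality of Theorem~\ref{thm: coef mono} by exploiting a factorization common to the two Ehrhart polynomials. Write $d=2n+4$ for the dimension of $\calO(P(\bw))$ and set $R(t):=(t+1)(t+2)\cdots(t+n+3)$. Since the swap operator $\varphi_i$ preserves the length $n$, Theorem~\ref{thm:roots}(\ref{thm:roots_item1}) shows that $R(t)$ divides both $L(\bw;t)$ and $L(\varphi_i(\bw);t)$; thus we may write $L(\bw;t)=R(t)\,Q(\bw;t)$ with $\deg Q(\bw;t)=n+1$, and analogously for $\varphi_i(\bw)$. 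As every coefficient of $R(t)$ is positive, the product of $R(t)$ with any coefficient-wise nonnegative polynomial is again coefficient-wise nonnegative. Hence the conjecture reduces to showing that
\[
E(t):=Q(\bw;t)-Q(\varphi_i(\bw);t)
\]
has nonnegative coefficients (the case $k=0$ is automatic, since $Q(\bw;0)=\tfrac{1}{(n+3)!}$ is independent of $\bw$, so $E(0)=0$).

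First I would make $E$ explicit in terms of $h^{*}$-data. Starting from $L(\bw;t)=\sum_{j=0}^{n+1}h^{*}_{j}(\bw)\binom{t+d-j}{d}$ and noting that $R(t)$ divides each $\binom{t+d-j}{d}$ for $0\le j\le n+1$, one factors
\[
\binom{t+d-j}{d}=\frac{1}{d!}\,R(t)\,P_{j}(t),\qquad P_{j}(t):=\prod_{r=0}^{j-1}(t-r)\prod_{r=n+4}^{2n+4-j}(t+r),
\]
with each $P_{j}$ of degree $n+1$. Consequently $E(t)=\tfrac{1}{d!}\sum_{j=0}^{n+1}\Delta_{j}\,P_{j}(t)$, where $\Delta_{j}:=h^{*}_{j}(\bw)-h^{*}_{j}(\varphi_i(\bw))$. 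Under the hypotheses of Theorem~\ref{thm: coef mono} (namely $w_{i-1}=w_i$ or $i=1$) we have $\Delta_{j}\ge 0$ for every $j$, and the Gorenstein symmetry recorded in Theorem~\ref{thm:roots}(\ref{thm:roots_item2}) forces the palindromicity $\Delta_{j}=\Delta_{n+1-j}$.

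The obstruction is that the transition polynomials $P_{j}$ are not coefficient-wise nonnegative, so $\Delta_{j}\ge 0$ does not by itself give $E\ge 0$ coefficient-wise. To break this I would pass to the symmetric variable $u:=t(t+n+4)=t^{2}+(n+4)t$, which has nonnegative coefficients and is invariant under the reflection $t\mapsto -(n+4)-t$ about the center $c=-\tfrac{n+4}{2}$ fixed by Theorem~\ref{thm:roots}(\ref{thm:roots_item2}). A direct computation gives $P_{j}\!\left(-(n+4)-t\right)=(-1)^{n+1}P_{n+1-j}(t)$, so the palindromic pairing of the $\Delta_{j}$ matches the reflection symmetry of the $P_{j}$, and $E$ is genuinely symmetric about $c$. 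The decisive step is then a $u$-positivity (equivalently $\gamma$-nonnegativity) statement: that $E$ --- and, more strongly, each $Q(\bw;t)$ --- lies in the nonnegative span of the polynomials $u^{a}$ when $n$ is odd, or of $(t+\tfrac{n+4}{2})\,u^{a}$ when $n$ is even. Since $u$ and $t+\tfrac{n+4}{2}$ both have nonnegative coefficients in $t$, this positivity immediately yields the nonnegativity of the coefficients of $E$, and hence the conjecture.

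The main obstacle, and the place I expect genuinely new input to be needed, is proving this $u$-positivity together with monotonicity of the $u$-coordinates under the swap. I would attempt it by giving the $u$-coefficients of $Q(\bw;t)$ a combinatorial meaning --- a symmetric, ``half-Eulerian'' refinement of the $\Omega$-Eulerian numbers of Lemma~\ref{lemma: Eulerian polynomial} that records the ascent statistic together with its behavior under the central reflection --- and then upgrading the injection $\phi_i$ constructed in the proof of Theorem~\ref{thm: coef mono} so that it preserves this finer statistic. Should such a statistic-preserving injection exist, the $u$-coordinates of $Q(\varphi_i(\bw);t)$ are dominated by those of $Q(\bw;t)$, which gives the $u$-positivity of $E$ and thus the conjecture; summing the resulting coefficient inequalities over all degrees would recover the volume bound of Theorem~\ref{thm:minmaxvolumes}, as one expects.
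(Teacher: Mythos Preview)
The statement you are attempting is Conjecture~\ref{conj 3}, which the paper explicitly lists among its open problems in Section~\ref{sec:conclusion}; the paper offers no proof of it, so there is no argument to compare yours against. What you have written is therefore not being measured against a known proof but against an open question.

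As a strategy, your reduction is sound up to the point you yourself flag. Factoring out $R(t)=(t+1)\cdots(t+n+3)$ via Theorem~\ref{thm:roots}(\ref{thm:roots_item1}) and observing that $R$ has positive coefficients correctly reduces the problem to showing that $E(t)=Q(\bw;t)-Q(\varphi_i(\bw);t)$ is coefficientwise nonnegative; your symmetry analysis of the $P_j$ under $t\mapsto -(n+4)-t$ and the passage to $u=t(t+n+4)$ are also correct. But the decisive assertion---that $E$ (or each $Q(\bw;t)$) is a nonnegative combination of the $u^a$ (respectively $(t+\tfrac{n+4}{2})u^a$), and that these $u$-coordinates are monotone under the swap---is precisely what would need to be proved, and you do not prove it. Upgrading the injection $\phi_i$ from the proof of Theorem~\ref{thm: coef mono} to preserve a hypothetical ``half-Eulerian'' refinement is a plausible plan, but no such statistic is defined and no such refinement of $\phi_i$ is exhibited. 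In its current form your write-up is a proof \emph{outline} with the central step left as a conjecture of its own; it does not settle Conjecture~\ref{conj 3}.
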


 One plausible way of proving Conjecture \ref{conj 2}, is to prove the Ehrhart positivity of the regular snake case using Theorem \ref{thm: h*_snake}, then prove Conjecture \ref{conj 3}.

We also propose to generalize the operator $\varphi_i$ to other graded posets, and to analyze the behavior of the $h^*$-vectors or the Ehrhart polynomials of other order polytopes under the generalized $\varphi_i$; the aim is to ascertain whether analogous monotonicity properties persist in those cases.


\section*{Acknowledgments}
The authors thank Per Alexandersson, Matias von Bell, Benjamin Braun, and Tom Roby for fruitful conversations, comments, and/or for pointing us to relevant literature.
EL was supported by Basic Science Research Program through the National Research Foundation of Korea (NRF) funded by the Ministry of Education (NRF-2022R1F1A1063424). 
ARVM was partially supported by the National Science Foundation under Award DMS-2532321.


\bibliographystyle{amsplain}
\bibliography{references}


\end{document}